\newcommand{\dist}{{\rm dist}}
\newcommand{\forb}{{\rm Forb}}
\newcommand{\ed}{{\rm ed}}
\newcommand{\hh}{\mathcal{H}}
\newcommand{\Exp}{{\mathbb E}}
\newcommand{\aas}{{\bf a.a.s.}}
\newcommand{\bb}{\mathcal{B}}
\newcommand{\dd}{\mathcal{D}}
\newcommand{\ff}{\mathcal{F}}
\newcommand{\kk}{\mathcal{K}}
\newcommand{\GG}{\mathbb{G}}
\newcommand{\PP}{\mathbb{P}}
\newcommand{\RR}{\mathbb{R}}
\newcommand{\vk}{V(K)}
\newcommand{\vw}{{\rm VW}}
\newcommand{\vb}{{\rm VB}}
\newcommand{\ew}{{\rm EW}}
\newcommand{\eb}{{\rm EB}}
\newcommand{\eg}{{\rm EG}}
\newcommand{\vwk}{{\rm VW}(K)}
\newcommand{\vbk}{{\rm VB}(K)}
\newcommand{\ewk}{{\rm EW}(K)}
\newcommand{\ebk}{{\rm EB}(K)}
\newcommand{\egk}{{\rm EG}(K)}
\newcommand{\x}{{\bf x}}
\newcommand{\y}{{\bf y}}
\newcommand{\pt}{\tilde{p}}
\newtheorem{thm}{Theorem}
\newtheorem{lem}[thm]{Lemma}
\newtheorem{cor}[thm]{Corollary}
\newtheorem{prop}[thm]{Proposition}
\newtheorem{rem}[thm]{Remark}
\newtheorem{quest}[thm]{Question}
\newtheorem{conj}[thm]{Conjecture}
\newtheorem{defn}[thm]{Definition}
\newcommand\restr[2]{{
  \left.\kern-\nulldelimiterspace 
  #1 
  \vphantom{\big|} 
  \right|_{#2} 
  }}
\font\xxroman=cmr17 scaled \magstep1 \font\xviiroman=cmr17
\def\udot{\mathbin{\ooalign{$\cup$\crcr
   \hfil\raise 6pt\hbox{\xviiroman.}\hfil\crcr}}}
\def\bigudotx#1#2{\mathop{\smash{\ooalign{$#1\bigcup$\crcr
   \hfil\raise 6pt\hbox{#2}\hfil\crcr}}\vphantom{\bigcup}}}
\def\bigudot{\mathop{\mathchoice%
 {\bigudotx\displaystyle{$\scriptscriptstyle\bullet$}}
 {\bigudotx\textstyle{\xxroman.}}
 {\relax}{\relax}}}
\newcommand{\N}{\mathbb{N}}
\newcommand{\one}{{\bf 1}}
\newcommand{\zero}{{\bf 0}}
\newcommand{\bdelta}{{\bm \delta}}
\newcommand{\be}{{\bf e}}
\newcommand{\phirange}{1-\varphi^{-1},\varphi^{-1}}
\newcommand{\phirangeleft}{1-\varphi^{-1},1/2}
\newcommand{\phirangeright}{1/2,\varphi^{-1}}
\newcommand{\simplex}{\Delta}
\begin{document}

\title{On the edit distance function of the random graph}

\author{Ryan R. Martin and Alexander W.N. Riasanovsky}
\address{Department of Mathematics, Iowa State University, Ames, IA 50011-2064}
\email{rymartin@iastate.edu, awnr@iastate.edu}
\thanks{Both authors' research was partially supported by NSF award DMS-1839918 (RTG). Martin's was partially supported by Simons Foundation Collaboration Grant \#353292}
%
%

\begin{abstract}
    Given a hereditary property of graphs $\mathcal{H}$ and a $p\in [0,1]$, the edit distance function ${\rm ed}_{\mathcal{H}}(p)$ is asymptotically the maximum proportion of edge-additions plus edge-deletions applied to a graph of edge density $p$ sufficient to ensure that the resulting graph satisfies $\mathcal{H}$.  The edit distance function is directly related to other well-studied quantities such as the speed function for $\mathcal{H}$ and the $\mathcal{H}$-chromatic number of a random graph.
    
    Let $\mathcal{H}$ be the property of forbidding an Erd\H{o}s-R\'{e}nyi random graph $F\sim \mathbb{G}(n_0,p_0)$, and let $\varphi$ represent the golden ratio. In this paper, we show that if $p_0\in [1-1/\varphi,1/\varphi]$, then~\aas~as $n_0\to\infty$, 
	\begin{align*}
	    {\rm ed}_{\mathcal{H}}(p) = (1+o(1))\,\frac{2\log n_0}{n_0}
	    \cdot\min\left\{
	        \frac{p}{-\log(1-p_0)}, 
	        \frac{1-p}{-\log p_0}
        \right\}.  
	\end{align*}  
	Moreover, this holds for $p\in [1/3,2/3]$ for any $p_0\in (0,1)$.
\end{abstract}

\keywords{edit distance, speed function, colored regularity graphs, random graph}
\subjclass[2010]{05C35,05C80}

\maketitle

\section{Introduction}
\label{sec:intro}

All graphs are finite and simple, i.e., without loops and multi-edges.  
A graph is {\em nonempty} if it has at least one edge.  
Denote $P_n$ to be the path graph on $n$ vertices. \\

For any $p\in [0,1]$ and any positive integer $n$, write $\GG(n,p)$ to be the distribution of graphs according to the Erd\H{o}s-R\'{e}nyi random graph model with edge probability $p$.  
That is, $G\sim\GG(n,p)$ means that the event that $uv\in E(G)$ for $uv\in \binom{\{1,\dots,n\}}{2}$ are independent and identically distributed (i.i.d.) with common probability $p$.  
We write \aas~ to mean that a sequence of events holds with probability approaching $1$ under some implied limit. The limit will be clear by the context.  
All logarithms are natural unless explicitly stated otherwise. \\

\subsection{Edit distance results and forbidding a random graph}\label{sub-sec:ed results}
The edit distance measures the minimum number of ``edits'' (that is, edge-additions plus edge-deletions) sufficient to turn one graph into another. This metric has been studied in contexts such as property testing and evolutionary biology (see~\cite{AlonStavFurthestGraph,MartinSurvey}).  
Formally, for any two $n$-vertex graphs $G,H$ on the same vertex set, 
\begin{align*}
    \dist(G,H) = |E(G)\triangle E(H)| \cdot {\textstyle \binom{n}{2}}^{-1}
\end{align*}
where $\triangle$ is the symmetric difference operation for sets. \\

A graph property $\hh$ is {\em hereditary} if $\hh$ is closed under isomorphism and vertex deletion.  
For any family $\ff$ of graphs, we may write $\forb(\ff)$ for the hereditary property of graphs which do not contain an induced copy of $F$ for any $F\in\ff$.  
Any hereditary property is of the form $\forb(\ff)$
for some $\ff$. 
A hereditary property of the form $\forb(\{F\})$ for a single graph $F$ is called a {\em principal hereditary property} and we will write $\forb(F)$ for simplicity. \\

A hereditary property $\hh$ is {\em nontrivial} if, for every positive integer $n$, there exists a graph in $\hh$ of order $n$.
All hereditary properties in this paper are nontrivial. 
If $\hh$ is a nontrivial hereditary property, then for all graphs $G$, we define 
\begin{align*}
    \dist(G,\hh) = \min\{\dist(G,H) : \exists H\in\hh\text{ s.t. }V(H) = V(G)\}.  
\end{align*}~\\

An early result that has motivated subsequent research is as follows:  
\begin{thm}[Alon-Stav~\cite{AlonStavHereditaryStability}]
    For a nontrivial hereditary property $\hh$, there exists a $p^* = p_\hh^* \in [0,1]$ so that with $G\sim\GG(n,p^*)$, 
    \begin{align*}
        \lim_{n\to\infty} \max_{|V(G)| = n}\dist(G,\hh)
        = \Exp\left[\dist(G,\hh)\right]+o(1).  
    \end{align*}
\end{thm}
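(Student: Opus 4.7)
The plan is to set $f(p) := \lim_{n \to \infty} \Exp_{G \sim \GG(n,p)}[\dist(G,\hh)]$, show that $f$ is continuous on $[0,1]$, and let $p^* \in [0,1]$ be a maximizer. Existence of the limit, continuity of $f$, and concentration of $\dist(G,\hh)$ around its mean all follow from the observation that $\dist(\cdot,\hh)$ changes by $O(1/n)$ when a single vertex's incidences are altered; Azuma's inequality applied to the vertex-exposure martingale then gives $\dist(G,\hh) = \Exp[\dist(G,\hh)] + o(1)$ \aas~for $G \sim \GG(n,p)$, and a coupling of $\GG(n,p)$ and $\GG(n,p')$ that agrees on most edges yields continuity in $p$.

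The lower bound $\max_{|V(G)|=n} \dist(G,\hh) \geq \Exp_{G\sim\GG(n,p^*)}[\dist(G,\hh)] - o(1)$ is then immediate, since by concentration almost every realization of $\GG(n,p^*)$ attains at least the mean up to an additive $o(1)$ error.

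For the upper bound, I would fix any $n$-vertex graph $G$ and apply Szemer\'edi's Regularity Lemma to obtain an $\varepsilon$-regular equipartition $V_1,\dots,V_k$ with pairwise densities $d_{ij}$. Define a random graph $G^*$ on $V(G)$ by independently resampling each regular pair $(V_i,V_j)$ as an i.i.d.\ bipartite random graph of density $d_{ij}$. A standard counting argument invoking the regularity of the pairs and the typical behavior of the resampled edges shows that $\dist(G,G^*) = o(1)$ \aas, so that $\dist(G,\hh) \leq \Exp_{G^*}[\dist(G^*,\hh)] + o(1)$. One then wants to argue that the expected distance of this weighted blowup model is at most $f(p_0)$ for some $p_0 \in [0,1]$, and hence at most $f(p^*)$, closing the upper bound.

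The main obstacle is this final averaging step: rigorously bounding the expected distance-to-$\hh$ of the mixed-density blowup $G^*$ by $f(p)$ for a single density $p$. This requires the structural theory of hereditary properties---for instance the colored regularity graph framework developed in subsequent work of Alon-Stav and Marchant-Thomason---to show that among all symmetric weighted templates on $k$ vertices the maximum expected distance is attained by a constant-weight template, which corresponds precisely to the pure random model $\GG(n,p^*)$. Once this convexity-type reduction is in place, combining it with the lower bound yields the claimed equality.
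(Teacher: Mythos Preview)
The paper does not contain a proof of this theorem; it is quoted as a known result of Alon and Stav~\cite{AlonStavHereditaryStability} and is used only as motivation for the edit distance function. There is therefore no ``paper's own proof'' against which to compare your attempt.

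As for the proposal itself, your outline follows the genuine Alon--Stav strategy: concentration via vertex-exposure martingales gives the lower bound, and the upper bound proceeds through Szemer\'edi regularity followed by a reduction to a pure random model. You have correctly located the real work in the last step. The issue is that what you call a ``convexity-type reduction''---showing that among all weighted $k$-vertex templates the maximum expected distance to $\hh$ is attained by a constant-density template---is not a routine convexity statement and is in fact the substantive content of the Alon--Stav paper. Their argument analyzes, for each regular partition with density matrix $(d_{ij})$, the optimal way to recolor the parts so as to land in $\hh$, and shows via a careful case analysis (essentially the precursor to the CRG machinery) that one can always do at least as well by replacing all $d_{ij}$ by a common value. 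Your sketch defers precisely this, so as written it is an outline rather than a proof; filling the gap amounts to reproducing the core of~\cite{AlonStavHereditaryStability}.
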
~\\

In other words, random graphs of density $p^*$ asymptotically achieve the maximum distance to $\hh$.  
For any $p\in [0,1]$ and any property $\hh$ nontrivial and hereditary, let 
\begin{align}
    \ed_\hh(p) 
    := 
        \limsup_{n\to\infty}
            \max_{\substack{
                    |V(G)| = n, \\
                    e(G) = \lfloor p\binom{n}{2}\rfloor
            }} \dist(G,\hh) .
\end{align}~\\

We call $\ed_\hh$ the {\em edit distance function of $\hh$}.  Theorem~\ref{thm:distance to random graph} below demonstrates that the maximum distance $\hh$ among all density-$p$ graphs is achieved asymptotically by Erd\H{o}s-R\'enyi random graphs of expected density $p$. \\

\begin{thm}[Balogh-Martin~\cite{BaloghMartinEditComputation}]\label{thm:distance to random graph}
    Let $\hh$ be a nontrivial hereditary property.  
    For all $p\in [0,1]$, if $G\sim\GG(n,p)$, then 
    \begin{align*}
        \ed_\hh(p) = \lim_{n\to\infty} \Exp[\dist(G,\hh)].  
    \end{align*}
    Moreover the function $\ed_\hh$ is continuous and concave-down.  
\end{thm}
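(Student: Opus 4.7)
The plan is to establish the common value
\begin{align*}
    \ed_\hh(p) \;=\; \lim_{n\to\infty}\Exp[\dist(G,\hh)] \;=\; \inf_{K\in\K(\hh)} g_K(p)
\end{align*}
via the machinery of colored regularity graphs (CRGs). A CRG $K$ is a complete graph whose vertices carry white/black colors and whose edges carry white/black/gray colors; to $K$ one associates a symmetric cost matrix $\Mkp$ whose entries reflect the edit cost of a pair of a given color ($p$ for white, $1-p$ for black, $0$ for gray, with vertex penalties on the diagonal), and one sets
\begin{align*}
    g_K(p) \;:=\; \min_{\x \in \simplex(\vk)} \x^T \Mkp \x.
\end{align*}
The Szemer\'edi regularity lemma produces, for every nontrivial hereditary $\hh$, a family $\K(\hh)$ of admissible CRGs against which one can measure edit distance.

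The upper bound $\ed_\hh(p) \le \inf_K g_K(p)$ is obtained by applying regularity to any density-$p$ graph $G_n$: one extracts a reduced CRG of essentially uniform density $p$, matches it against an admissible $K$ with minimizer $\x$, rounds $\x$ to integer cluster sizes, and edits each pair of clusters to match the color prescribed by $K$. The matching lower bound $\ed_\hh(p) \ge \inf_K g_K(p)$ comes from a blow-up construction: for the optimal $K$, blow up using cluster-size vector $\x$ and fill gray edges with a random density-$p$ pattern; one then checks that any graph in $\hh$ must disagree with this construction on at least a $g_K(p)-o(1)$ fraction of pairs.

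For the identification with $\lim_{n\to\infty}\Exp[\dist(G,\hh)]$, the upper bound is immediate: $\GG(n,p)$ has density $p+o(1)$ concentrated tightly, so \aas\ the same regularity-plus-editing argument yields $\Exp[\dist(G,\hh)] \le \inf_K g_K(p) + o(1)$. The matching lower bound is the delicate part: a regularity partition of $G\sim\GG(n,p)$ is, \aas, density-$p$ regular on every pair, so its reduced structure is \emph{indistinguishable} from that of a worst-case density-$p$ graph, and the CRG-matching from the extremal case forces $\dist(G,\hh) \ge \inf_K g_K(p) - o(1)$ \aas. Concentration (McDiarmid, exploiting that $\dist(\cdot,\hh)$ is $\binom{n}{2}^{-1}$-Lipschitz under a single edge flip) then passes from \aas\ bounds to the expectation; the $O(n^{-1/2})$ edge-count discrepancy between $\GG(n,p)$ and the fixed-density model costs only $o(1)$ after normalization.

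Finally, concavity is essentially automatic: every entry of $\Mkp$ is linear in $p$, so $\x^T\Mkp\x$ is affine in $p$ for each fixed $\x$, and $g_K(p)$ is a pointwise minimum of affine functions in $p$, hence concave. Then $\ed_\hh(p)=\inf_{K\in\K(\hh)}g_K(p)$ is a pointwise infimum of concave functions and so is itself concave; continuity on $(0,1)$ is automatic from concavity, and the endpoints $p\in\{0,1\}$ follow by direct inspection since $\ed_\hh(0)$ and $\ed_\hh(1)$ correspond to trivial editing regimes. I expect the main obstacle to be justifying that a typical regularity partition of $\GG(n,p)$ indeed has the same worst-case CRG structure as an extremal density-$p$ graph; this requires careful quantitative control of the quasirandomness of $\GG(n,p)$ against the regularity parameter and of the rounding errors incurred in passing from continuous simplex minima to integer cluster sizes.
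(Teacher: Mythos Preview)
The paper does not prove this theorem at all: it is stated as a known result attributed to Balogh and Martin~\cite{BaloghMartinEditComputation} and used as background, with no argument supplied. Hence there is no ``paper's own proof'' against which to compare your proposal.

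That said, your outline is a reasonable summary of the standard CRG approach that underlies the Balogh--Martin result, and your concavity argument (each $g_K$ is a pointwise minimum over $\x$ of functions affine in $p$, and $\ed_{\hh}$ is an infimum of such $g_K$) is exactly the argument the paper itself invokes later, in Section~\ref{sub-sec:ed of random graph}, to justify concavity of $g_{K'}$. As a proof sketch, however, it remains just that: the two substantive steps you flag---that a regularity partition of $\GG(n,p)$ is \aas\ ``indistinguishable'' from that of an extremal density-$p$ graph, and the passage from CRG bounds to actual edit distance via blow-ups---are precisely the content of the cited paper and would each require several pages to execute carefully. Your proposal correctly identifies where the work lies but does not carry it out; if asked to supply a proof here, the honest answer is that this is a quoted result and one should cite~\cite{BaloghMartinEditComputation} (and~\cite{AlonStavHereditaryStability} for related machinery) rather than reprove it.
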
~\\

Proposition~\ref{prop:ed of G(n0,1/2)} below has several short proofs and follows from Bollob\'as' asymptotic result on the chromatic number of a random graph (see~\cite{BollobasChromatic}), together with established techniques for computing edit distance functions (see \cite{MartinSurvey}). \\

\begin{prop}[Alon-Stav~\cite{AlonStavHereditaryStability}]\label{prop:ed of G(n0,1/2)}
    Let $F\sim\GG(n_0,1/2)$ and define $\hh:=\forb(F)$.  
    Then \aas~with $n_0\to\infty$, 
    \begin{align*}
        \ed_\hh(p) 
        = (1+o(1))\, 
            \dfrac{2\log_2n_0}
            {n_0}
            \cdot\min\{p,1-p\}.  
    \end{align*}
\end{prop}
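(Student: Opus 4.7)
The plan is to use the colored regularity graph (CRG) framework for the edit distance function of a principal hereditary property (see~\cite{MartinSurvey}), which gives
$$\ed_{\forb(F)}(p) = \min_K g(K, p),$$
where the minimum is over CRGs $K$ that \emph{cover} $F$ (i.e., $F$ does not appear as an induced subgraph of any suitable blow-up of $K$) and $g(K, p) = \min_{\x \in \Delta}\, \x^T \Mkp \x$ is the standard quadratic form, with diagonal entries $p$ or $1-p$ (according to the vertex color) and off-diagonal entries $p$, $1-p$, or $0$ (according as the edge is black, white, or gray). The key input from Bollob\'as' chromatic number theorem is that, \aas\ as $n_0 \to \infty$ with $F \sim \GG(n_0, 1/2)$, one has $\chi(F), \chi(\overline F) = (1+o(1))\, n_0/(2\log_2 n_0)$ and $\omega(F), \alpha(F) = (1+o(1))\, 2\log_2 n_0$.

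For the upper bound, let $m = \lfloor \chi(F) \rfloor - 1$ and $m' = \lfloor \chi(\overline F) \rfloor - 1$, and let $K_B$ (respectively $K_W$) denote the CRG on $m$ black vertices (resp.\ $m'$ white vertices) with all edges gray. A blow-up of $K_B$ is exactly an $m$-colorable graph, so the condition $\chi(F) > m$ prevents $F$ from embedding; similarly $K_W$ covers $F$. Evaluating at the uniform weight vector gives $g(K_B, p) = p/m$ and $g(K_W, p) = (1-p)/m'$, so
$$\ed_\hh(p) \leq \min\left\{\frac{p}{m},\; \frac{1-p}{m'}\right\} = (1+o(1))\cdot \frac{2\log_2 n_0}{n_0}\cdot \min\{p, 1-p\}.$$

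The matching lower bound is more delicate. The diagonal entries of $\Mkp$ are at least $\min\{p, 1-p\}$ and all off-diagonals are nonnegative, so for any $F$-covering CRG $K$ and any $\x$,
$$\x^T \Mkp \x \;\geq\; \min\{p, 1-p\} \cdot \sum_v x_v^2 \;\geq\; \frac{\min\{p, 1-p\}}{|V(K)|}.$$
It therefore suffices to show that any minimal $F$-covering CRG $K$ satisfies $|V(K)| \leq (1+o(1))\, n_0/(2\log_2 n_0)$. This is the main obstacle. The argument proceeds via the Marchant-Thomason reduction to optimal CRGs with only gray edges: a non-gray edge of $K$ enforces a complete or empty bipartite structure between the corresponding parts of any $F$-embedding, but random $F$ \aas\ admits no such large bipartite structures, so such edges can only decorate parts that are trivially small and can be removed from an optimum without decreasing $g$. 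Once this reduction is in place, an all-gray CRG with $w$ white and $b$ black vertices covers $F$ iff $V(F)$ cannot be partitioned into $w$ cliques and $b$ independent sets of $F$. Combining with $\omega(F), \alpha(F) \leq (1+o(1))\, 2\log_2 n_0$ forces $w + b \leq (1+o(1))\, n_0/(2\log_2 n_0)$, completing the lower bound.
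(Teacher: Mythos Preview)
Your upper bound is fine (modulo a harmless swap of color conventions). The lower bound, however, has two genuine gaps.

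First, the ``Marchant--Thomason reduction to optimal CRGs with only gray edges'' is valid only at $p=1/2$: Theorem~\ref{thm:p-core colors} says precisely that a $1/2$-core CRG has all edges gray, but for $p\neq 1/2$ there are $p$-core CRGs in $\kk_\hh$ with non-gray edges and arbitrarily many vertices. For instance, every dalmatian CRG $D_t$ ($t$ black vertices, all edges white) is $p$-core for $p<1/2$ and lies in $\kk_\hh$ \aas\ (since $F\mapsto D_t$ would force $F$ to be a disjoint union of cliques), yet $|V(D_t)|=t$ is unbounded. Your heuristic about ``random $F$ admitting no large bipartite structures'' does not repair this: it concerns hypothetical embeddings of $F$ into $K$, but here $F\not\mapsto K$, so there is no embedding to constrain, and graying an edge of $K$ can genuinely push $K$ out of $\kk_\hh$. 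The standard short fix is to prove the lower bound only at $p=1/2$, where the reduction \emph{is} valid, and then invoke the concavity of $\ed_{\hh}$ (Theorem~\ref{thm:distance to random graph}) together with $\ed_{\hh}(0)=\ed_{\hh}(1)=0$ to obtain $\ed_{\hh}(p)\geq 2\min\{p,1-p\}\cdot\ed_{\hh}(1/2)$ for all $p$.

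Second, even at $p=1/2$ your last step has the implication reversed. Knowing $\omega(F),\alpha(F)\leq(1+o(1))\,2\log_2 n_0$ tells you that any partition of $V(F)$ into cliques and independent sets requires at least $(1-o(1))\,n_0/(2\log_2 n_0)$ parts; it does \emph{not} tell you that whenever $w+b$ exceeds this threshold a partition exists, which is what you actually need. The correct input is the stronger fact underlying Bollob\'as' chromatic-number theorem: \aas\ every induced subgraph of $F$ on at least $n_0/(\log n_0)^2$ vertices contains both a clique and an independent set of size $(1-o(1))\,2\log_2 n_0$. A greedy extraction then shows that $V(F)$ \emph{can} be partitioned into $w$ independent sets and $b$ cliques whenever $w+b\geq(1+o(1))\,n_0/(2\log_2 n_0)$, which yields the required upper bound on $|V(K)|$ for all-gray $K\in\kk_\hh$.
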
~\\

Our main result extends Proposition \ref{prop:ed of G(n0,1/2)} so that we are able to determine the edit distance function asymptotically for all $p_0$ in a relatively large open interval around $1/2$. Let $\varphi = (1+\sqrt{5})/2$ be the golden ratio. Note that $1-\varphi^{-1}\approx 0.381966$ and $\varphi^{-1}\approx 0.618034$. \\

\begin{thm}\label{thm:ed of random graph}
    Fix $p_0 \in (0,1)$, let $F\sim\GG(n_0,p_0)$, and define $\hh:=\forb(F)$.
    If $p_0\in [\phirange]$ then \aas~with $n_0\rightarrow\infty$,
    \begin{align}\label{eq:ed of G(n0,p0)}
        \ed_\hh(p) = (1+o(1))\,\dfrac{2\log n_0}{n_0}\cdot\min\left\{
            \dfrac{p}{-\log(1-p_0)}, 
            \dfrac{1-p}{-\log p_0}
        \right\} 
    \end{align}
    holds for all $p\in [0,1]$. If $p_0\in [0,1-\varphi^{-1})$, then \aas~\eqref{eq:ed of G(n0,p0)} holds for all $p\in [1/3,1]$. If $p_0\in (\varphi^{-1},1]$, then \aas~\eqref{eq:ed of G(n0,p0)} holds for all $p\in [0,2/3]$.
\end{thm}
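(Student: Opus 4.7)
The plan is to use the colored regularity graph (CRG) machinery. For a principal hereditary property $\hh = \forb(F)$, one has $\ed_\hh(p) = \min_K f_K(p)$, where the minimum runs over CRGs $K$ such that no blow-up of $K$ contains $F$ as an induced subgraph, and $f_K(p)$ is the minimum over probability weight vectors $\p$ on $V(K)$ of a quadratic form that charges weight $p$ for each white pair-constraint and weight $1-p$ for each black pair-constraint. Both directions (upper and lower bound) will be established within this framework.

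For the upper bound I would exhibit two elementary CRGs. By Bollob\'as's theorem, a.a.s.\ $\chi(F) = (1+o(1))\,n_0(-\log(1-p_0))/(2\log n_0)$ and $\chi(\overline{F}) = (1+o(1))\,n_0(-\log p_0)/(2\log n_0)$. Let $K_W$ be the CRG consisting of $\chi(F)-1$ white vertices joined pairwise by gray edges; then $F \notin \mathrm{sub}(K_W)$, since any sub-pattern embedding would give a proper $(\chi(F)-1)$-coloring of $F$. The quadratic form for $K_W$ reduces to $p\sum_v p_v^2$, so the uniform $\p$ is optimal and $f_{K_W}(p) = p/(\chi(F)-1)$. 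Symmetrically, $K_B$ with $\chi(\overline{F})-1$ black vertices and gray edges gives $f_{K_B}(p) = (1-p)/(\chi(\overline{F})-1)$. Taking the minimum and substituting the chromatic-number asymptotics produces the upper bound in \eqref{eq:ed of G(n0,p0)}.

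The lower bound is the main technical task: I must show that every CRG $K$ with $F \notin \mathrm{sub}(K)$ a.a.s.\ satisfies $f_K(p) \geq (1-o(1))\cdot\mathrm{RHS}$. My plan is to analyze the KKT/stationarity conditions of the quadratic program defining $f_K(p)$ in order to reduce to CRGs whose positive-weight sub-CRG falls into one of three types: all-white with gray edges (type~W), all-black with gray edges (type~B), or a bicolored ``mixed'' type (type~M). Type~W forces $|V(K)| \leq \chi(F)-1$ and so $f_K(p) \geq p/(\chi(F)-1)$; type~B is symmetric. For type~M one must establish a joint random-graph decomposition lemma for $F$: a.a.s., whenever a mixed CRG has $f_K(p)$ below the target bound, $F$ admits a partition into independent sets and cliques obeying the prescribed between-part adjacency pattern, forcing $F \in \mathrm{sub}(K)$ and a contradiction.

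The restrictions on $p_0$ and $p$ arise from the type-M analysis. After normalization, the condition that a type-M CRG beats $\min\{f_{K_W},f_{K_B}\}$ reduces to a polynomial inequality whose critical point satisfies $x^2+x=1$, with positive root $\varphi^{-1}$; hence for $p_0 \in [\phirange]$ the mixed CRGs cannot beat the simple ones, so \eqref{eq:ed of G(n0,p0)} holds for all $p$, while for $p_0$ outside this interval the mixed CRGs only threaten to beat the bound on $p \in [0,1/3)$ (when $p_0 < 1-\varphi^{-1}$) or $(2/3,1]$ (when $p_0 > \varphi^{-1}$). The principal obstacle is the joint decomposition estimate for $F$ in the type-M case---a refinement of Bollob\'as's chromatic-number result giving simultaneous control over balanced families of independent sets and cliques in $F$---which requires a careful concentration and union-bound argument over bounded-size mixed CRG templates.
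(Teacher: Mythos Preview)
Your upper bound is the same as the paper's. The lower-bound plan, however, has a real structural gap: the trichotomy into types W, B, M (all-white vertices with gray edges, all-black vertices with gray edges, or a vertex-bicoloured mix) does \emph{not} describe the $p$-core CRGs that actually arise. For $p<1/2$ the relevant building blocks are the \emph{dalmatian} CRGs $D_t$: $t$ black vertices pairwise joined by \emph{white} (not gray) edges, with $g_{D_t}(p)=p+(1-2p)/t$. These are all-black but not of your type B, and disjoint unions such as $D_3\oplus D_5\oplus D_\infty$ lie strictly between the type-W and type-B values. The KKT reduction you describe will not by itself produce this structure; one needs an additional argument (in the paper, an eigenvalue criterion showing that the CRG associated to $P_3$ is $p$-prohibited on $[\phirange]$, which forces every $p$-core CRG there to have underlying graph a disjoint union of cliques, i.e.\ to lie in $\dd_p$).

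The second gap is the finiteness needed for your union bound. You invoke ``bounded-size mixed CRG templates,'' but give no reason why finitely many templates suffice; a priori the dalmatian orders $t$ and the number of components are unbounded. In the paper this is handled by a trimming lemma: for $p\in(1/3,2/3)$ any CRG has a sub-CRG with component sizes bounded by $B(p,\varepsilon)$ and $g$-value inflated by at most $1+\varepsilon$. The bound on $p$ is exactly where long paths become $p$-prohibited, and the thresholds $1-\varphi^{-1},\varphi^{-1}$ on $p_0$ then come from the transfer $p_0\mapsto p^*=\log(1-p_0)/\log(p_0(1-p_0))$ sending $1-\varphi^{-1}\mapsto 1/3$ (equivalently $(1-p_0)^2=p_0$, which is your $x^2+x=1$), not from a direct type-M optimisation. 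With the templates finite, the ``joint decomposition lemma'' you need is precisely the Bollob\'as--Thomason $\hh$-chromatic-number theorem applied componentwise.
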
~\\
In fact, the $o(1)$ error term depends only on the constant $p_0$ and holds uniformly for all $p$ in each of the respective intervals.

The first author conjectured (see~\cite{MartinSurvey}) that for all $p_0\in [0,1]$, \eqref{eq:ed of G(n0,p0)} holds \aas~for all $p\in [0,1]$. Theorem~\ref{thm:ed of random graph} proves this for a range of $p_0$ of size $\approx 0.236068$. \\

\subsection{Equivalent parameters}\label{sub-sec:related parameters}
The edit distance function is also interesting because of its connection to other parameters involving random graphs.  
For $\hh$ any nontrivial hereditary property and any $p\in (0,1)$, the {\em speed} of $\hh$ is 
\begin{align*}
    c_\hh(p) := \lim_{k\to\infty} -\log_2\left(\PP\left[\GG(k,p) \in \hh\right]\right)\cdot \textstyle{\binom{n}{2}}^{-1}
\end{align*}
Indeed, this limit does exist and a proof of that fact appears in~\cite{AlekseevHereditary} and in~\cite{BollobasThomasonStructureColoring}.  
See also the survey~\cite{MartinSurvey}. \\

The following observation was made by Thomason but it can be shown to follow from a prior result due to Bollob\'as and Thomason~\cite{BollobasThomasonStructureColoring}. \\

\begin{thm}[Thomason~\cite{ThomasonGraphsColors}]\label{thm:c and ed}
    Let $\hh$ be a nontrivial hereditary property.  
    Then for all $p\in (0,1)$, 
    \begin{align*}
        c_\hh(p)    =   (-\log_2(p(1-p))) \cdot \ed_\hh\left(\dfrac{\log (1-p)}{\log(p(1-p))}\right).  
    \end{align*}
\end{thm}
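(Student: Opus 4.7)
The plan is to recast both $c_\hh(p)$ and $\ed_\hh(\cdot)$ in the common language of colored regularity graphs (CRGs) and then match them by a short algebraic substitution. Let $\kk=\kk(\hh)$ denote the family of CRGs associated to $\hh$ by the Bollob\'as--Thomason structure theorem. For each $K\in\kk$ and probability vector $\p$ on $\vk$, I would introduce the ``white weight'' and ``black weight''
\begin{align*}
A_K(\p) &= \sum_{u\in\vwk}p_u^2 + 2\sum_{uv\in\ewk}p_up_v, \\
B_K(\p) &= \sum_{u\in\vbk}p_u^2 + 2\sum_{uv\in\ebk}p_up_v,
\end{align*}
which measure the asymptotic density of internal and cross pairs subject to a white-type (resp.\ black-type) constraint in a $K$-colored graph with profile $\p$.

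The key input consists of two structural formulas. A routine expectation calculation gives the standard CRG description of the edit-distance function, $\ed_\hh(p)=\min_{K,\p}\bigl[p\,A_K(\p)+(1-p)\,B_K(\p)\bigr]$, with the two weights multiplied by the per-pair expected edit costs under $\GG(k,p)$. A parallel calculation, powered by the Bollob\'as--Thomason analysis of the probability that $\GG(k,p)$ is $K$-colorable with profile $\p$, gives $c_\hh(p)=\min_{K,\p}\bigl[(-\log_2(1-p))\,A_K(\p)+(-\log_2 p)\,B_K(\p)\bigr]$, since a fixed profile-$\p$ partition realizes $K$-colorability with probability on the order of $(1-p)^{A_K(\p)\binom{k}{2}}p^{B_K(\p)\binom{k}{2}}$, up to subexponential factors coming from the choice of partition. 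With both formulas in hand, I would verify the one-line identity
\begin{align*}
(-\log_2(p(1-p)))\bigl[q\,A_K(\p)+(1-q)\,B_K(\p)\bigr]=(-\log_2(1-p))\,A_K(\p)+(-\log_2 p)\,B_K(\p)
\end{align*}
for $q=\log(1-p)/\log(p(1-p))$ (so that $1-q=\log p/\log(p(1-p))$), which reduces after clearing the denominator to the tautology $\log(1-p)\,A_K(\p)+\log p\,B_K(\p)=\log(1-p)\,A_K(\p)+\log p\,B_K(\p)$. Since the scalar $-\log_2(p(1-p))$ is positive for $p\in(0,1)$ and independent of $(K,\p)$, it can be pulled through the minimum over $K\in\kk$ and $\p$, yielding $c_\hh(p)=(-\log_2(p(1-p)))\,\ed_\hh(q)$.

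The hard part will be the structural input for $c_\hh$: it is the Bollob\'as--Thomason asymptotic for the $K$-colorability probability of $\GG(k,p)$ that does the real work, and once this and the parallel CRG representation of $\ed_\hh$ are granted, the identity is a one-line calculation. Minor side checks are that $q\in(0,1)$ for $p\in(0,1)$ (immediate from $0<|\log(1-p)|<|\log(p(1-p))|$) and that the two minima are interpreted consistently; since $\kk(\hh)$ may be infinite one should phrase both identities as infima, but because the proportionality constant is independent of $(K,\p)$, the infimum identity is preserved without requiring attainment.
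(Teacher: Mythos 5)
The paper does not prove this theorem; it is stated with citation to Thomason, and the text remarks that it follows from Bollob\'as--Thomason. Your sketch is correct and is, in substance, the route the paper implicitly relies on: the CRG representation of $\ed_\hh$ is Theorem~\ref{thm:inf = min}, and your asymptotic for the probability of $K$-colorability with profile $\p$ is exactly what is packaged as Lemma~\ref{lem:c and g} (Lemma~5.1 of Bollob\'as--Thomason). Your identification $g_K(p,\p)=p\,A_K(\p)+(1-p)\,B_K(\p)$ matches the definition of $M_K(p)$ here, and the algebraic identity you verify is precisely the statement that $-\log_2(p(1-p))\,g_K(p^*)$ equals $-\log_2(1-p)\,A_K(\p)-\log_2 p\,B_K(\p)$ after noting that the ratio $\log(1-p)/\log(p(1-p))$ is base-independent (so the mismatch between natural log in the argument of $\ed_\hh$ and $\log_2$ in the scalar causes no trouble). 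Pulling the positive scalar through the infimum over $(K,\p)$ is legitimate and requires no attainment.

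Two small points worth flagging. First, your displayed formula $c_\hh(p)=\min_{K,\p}[(-\log_2(1-p))A_K(\p)+(-\log_2 p)B_K(\p)]$ conflates two separate facts: the per-type formula (which is Lemma~\ref{lem:c and g}) and the structural statement that minimizing over $\kk_\hh$ recovers the speed of the full property $\hh$ rather than just the speed of each single-type property $\hh_K=\{G:G\mapsto K\}$; the latter is the genuinely nontrivial part of the Bollob\'as--Thomason structure theorem, and your last paragraph rightly flags this as where the real work lives, so the logic is intact but the presentation slightly blurs where the theorem-level input ends and the one-line algebra begins. Second, the base-independence of the ratio is the reason your clearing-denominators step yields an honest tautology; it is easy to trip on this since the paper's convention is that unsubscripted $\log$ is natural while the scalar uses $\log_2$, so it is worth one explicit sentence in a final write-up.
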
~\\

Note that the function $f:(0,1)\to(0,1)$ defined by 
\begin{align*}
    f(x) := \dfrac{\log (1-x)}{\log(x(1-x))}
\end{align*}
on $x\in (0,1)$ is invertible.  
Since $\ed_\hh$ is continuous, $c_\hh$ can be computed from $\ed_\hh$ and vice versa. As a result, combining Theorem~\ref{thm:c and ed} with Theorem~\ref{thm:ed of random graph} yields a result on the speed function of hereditary properties defined by random graphs. \\

\begin{cor}\label{cor:ed of random graph}
    Fix $p_0 \in (0,1)$, let $F\sim\GG(n_0,p_0)$, and define $\hh:=\forb(F)$.
    If $p_0\in [\phirange]$ then \aas~ with $n_0\to\infty$, 
    \begin{align}\label{eq:speed of G(n0,p0)}
        c_\hh(p)    =   (1+o(1))\,\dfrac{2\log_2 n_0}{n_0}\cdot\min\left\{
            \dfrac{\log (1-p)}{\log(1-p_0)}, 
            \dfrac{\log p}{\log p_0}
        \right\}
   \end{align}
    holds for all $p\in [0,1]$. If $p_0\in [0,1-\varphi^{-1})$, then \aas~\eqref{eq:speed of G(n0,p0)} holds for all $p\in [1-\varphi^{-1},1]$. If $p_0\in (\varphi^{-1},1]$, then \aas~\eqref{eq:speed of G(n0,p0)} holds for all $p\in [0,\varphi^{-1}]$.

\end{cor}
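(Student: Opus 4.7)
The plan is to derive Corollary~\ref{cor:ed of random graph} as a direct substitution: feed the change-of-variable from Thomason's identity (Theorem~\ref{thm:c and ed}) into the asymptotic formula of Theorem~\ref{thm:ed of random graph}, then track which intervals of $p$ arise from the three parameter regimes of $p_0$. Set
\[
q = f(p) = \frac{\log(1-p)}{\log(p(1-p))},
\]
so that Theorem~\ref{thm:c and ed} reads $c_\hh(p) = (-\log_2(p(1-p)))\cdot\ed_\hh(q)$. The essential algebraic observation is that the factor $-\log_2(p(1-p))$ clears the denominator $\log(p(1-p))$ of $q$ and $1-q = \log p/\log(p(1-p))$, leaving
\[
-\log_2(p(1-p))\cdot q = -\log_2(1-p), \qquad -\log_2(p(1-p))\cdot(1-q) = -\log_2 p.
\]
Hence multiplying the bracket $\min\{q/(-\log(1-p_0)),\,(1-q)/(-\log p_0)\}$ from Theorem~\ref{thm:ed of random graph} through by $-\log_2(p(1-p))$ and dividing the prefactor $2\log n_0/n_0$ by $\log 2$ converts it into $(2\log_2 n_0/n_0)\min\{\log(1-p)/\log(1-p_0),\,\log p/\log p_0\}$, which is precisely~\eqref{eq:speed of G(n0,p0)}.

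Next I would verify that the $p$-interval in each case of the corollary matches the $q$-interval in the corresponding case of Theorem~\ref{thm:ed of random graph} under $q=f(p)$. The function $f\colon(0,1)\to(0,1)$ is strictly increasing with $f(1/2)=1/2$, $f(0^+)=0$, and $f(1^-)=1$. A short computation shows $f(p)\ge 1/3$ is equivalent to $(1-p)^2\le p$, i.e.\ $p^2-3p+1\le 0$, whose relevant root is $(3-\sqrt{5})/2 = 1-\varphi^{-1}$; symmetrically, $f(p)\le 2/3$ is equivalent to $p^2+p-1\le 0$, i.e.\ $p\le(\sqrt{5}-1)/2 = \varphi^{-1}$. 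Thus the range $p\in[0,1]$ survives the change of variables in the balanced case, the range $q\in[1/3,1]$ (which is what Theorem~\ref{thm:ed of random graph} guarantees when $p_0\in[0,1-\varphi^{-1})$) becomes $p\in[1-\varphi^{-1},1]$, and $q\in[0,2/3]$ (for $p_0\in(\varphi^{-1},1]$) becomes $p\in[0,\varphi^{-1}]$, matching the corollary's statement.

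The final thing to check is the uniformity of the $o(1)$. The remark after Theorem~\ref{thm:ed of random graph} records that the error depends only on $p_0$ and is uniform in $q$ in the stated interval; since $f$ is a fixed continuous bijection independent of $n_0$, pulling the estimate back through $q=f(p)$ preserves uniformity on the corresponding $p$-interval, and the prefactor $-\log_2(p(1-p))$ is a bounded continuous function of $p$ on any closed subinterval of $(0,1)$, so it can be absorbed into a $(1+o(1))$ multiplicative error.

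I do not expect any real obstacle here; the content of the corollary is entirely packaged in Theorem~\ref{thm:ed of random graph}, and the derivation reduces to the single identity that $-\log_2(p(1-p))$ cancels $\log(p(1-p))$ in the denominators of $q$ and $1-q$. The only thing requiring any care is the translation of the endpoints $1/3$ and $2/3$ for $q$ into the endpoints $1-\varphi^{-1}$ and $\varphi^{-1}$ for $p$, but this is exactly the quadratic computation above and explains why the same golden-ratio constants appear in both Theorem~\ref{thm:ed of random graph} and Corollary~\ref{cor:ed of random graph}.
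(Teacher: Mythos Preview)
Your proposal is correct and follows exactly the route the paper indicates: the corollary is stated as an immediate consequence of combining Theorem~\ref{thm:c and ed} with Theorem~\ref{thm:ed of random graph}, and you have carried out that substitution and the interval translation (the quadratic giving the golden-ratio endpoints) accurately. The paper gives no further proof, so there is nothing to compare.
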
~\\

For any hereditary property $\hh$ and any graph $G$, let $\chi_\hh(G)$ be the {\em $\hh$-chromatic number} of $G$. This is the minimum nonnegative integer $k$ for which there exists a partition $V(G) = V_1\udot\cdots\udot V_k$ such that $G[V_i]$ satisfies $\hh$ for all $i\in\{1,\ldots,k\}$.  
If $\hh$ is the property of being an empty graph, then $\chi_\hh(G)$ is the chromatic number of $G$.  \\ 

Bollob\'as and Thomason established Theorem~\ref{thm:H-chromatic number} for the $\hh$-chromatic number of a random graph. 
\begin{thm}[Bollob\'as-Thomason~\cite{BollobasThomasonStructureColoring}]\label{thm:H-chromatic number}
    Let $p\in (0,1)$ and let $\hh$ be a nontrivial hereditary property.  
    Then \aas~with $G\sim\GG(n,p)$, 
    \begin{align}\label{eq:H-chromatic number}
        \chi_{\hh}(G) = (1+o(1))\, c_\hh(p)\, \dfrac{n}{2\log_2n}
    \end{align}
\end{thm}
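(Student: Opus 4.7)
The plan is to establish matching bounds on $\chi_\hh(G)$ for $G\sim\GG(n,p)$, with the central auxiliary parameter
\begin{align*}
\alpha_\hh(G):=\max\{|S|:S\subseteq V(G),\,G[S]\in\hh\},
\end{align*}
the largest order of an induced subgraph of $G$ belonging to $\hh$. Since every color class of an $\hh$-coloring of $G$ is itself an $\hh$-good subset, $\chi_\hh(G)\geq n/\alpha_\hh(G)$. Setting $k_0:=(2\log_2 n)/c_\hh(p)$, the definition of $c_\hh(p)$ yields $\PP[\GG(k,p)\in\hh]=2^{-(1+o(1))c_\hh(p)\binom{k}{2}}$, so the expected number of $\hh$-good $k$-subsets of $V(G)$ is $\binom{n}{k}\cdot 2^{-(1+o(1))c_\hh(p)\binom{k}{2}}=o(1)$ whenever $k\geq(1+\varepsilon)k_0$. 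Markov's inequality forces $\alpha_\hh(G)<(1+\varepsilon)k_0$ a.a.s., giving the lower bound $\chi_\hh(G)\geq(1-\varepsilon)\,c_\hh(p)\,n/(2\log_2 n)$.

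For the upper bound I would use a Matula-style iterative thinning. Fix $n_*:=n/\log^3 n$ and $k:=\lfloor(1-\varepsilon)k_0\rfloor$. Starting from $G$, repeatedly remove an $\hh$-good subset of size $k$ from the remaining vertex set (placing it in its own color class) until fewer than $n_*$ vertices remain, then use singleton classes. This produces an $\hh$-coloring with at most $n/k+n_*=(1+O(\varepsilon))\,c_\hh(p)\,n/(2\log_2 n)$ classes. The nontrivial step is that an $\hh$-good $k$-subset exists inside every induced subgraph of $G$ on at least $n_*$ vertices. For a fixed such $U$, this is a statement about $\GG(|U|,p)$; the number $X_U$ of $\hh$-good $k$-subsets of $U$ has mean $|U|^{\Theta(\log|U|)}$, and for $k$-subsets $S,T$ of $U$ with $|S\cap T|=j$ the pair correlation can be estimated using heredity (which forces $G[S\cap T]\in\hh$ automatically) together with the independence of edges outside $S\cap T$, yielding a bound of the shape
\begin{align*}
\PP[G[S]\in\hh,\,G[T]\in\hh]\,\leq\,\PP[G[S\cap T]\in\hh]\cdot 2^{-(2-o(1))c_\hh(p)\binom{k-j}{2}}.
\end{align*}
Summing over $j$ gives $\E[X_U^2]=(1+o(1))\E[X_U]^2$, and Janson's inequality then produces $\PP[X_U=0]\leq\exp(-\Omega(\E[X_U]))$, strong enough to union-bound over all candidate subsets $U$.

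The principal obstacle is controlling the pair correlations above: the bound must be essentially tight for small $j$ in order for the Janson estimate to survive the union bound. This requires combining the definition of $c_\hh$ as a limit with a submultiplicativity/pigeonhole argument of the type used in~\cite{AlekseevHereditary,BollobasThomasonStructureColoring} to establish existence of $c_\hh$ in the first place. Verifying that the correlation contributions are dominated by the small-$j$ regime, and that the resulting deviation estimate is strong enough to handle the union bound across the peeling, is the delicate part of the argument; the remaining bookkeeping with the definition of $c_\hh$ is routine.
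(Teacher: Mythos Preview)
The paper does not prove this theorem at all: it is quoted from Bollob\'as and Thomason~\cite{BollobasThomasonStructureColoring} and used as a black box in Section~\ref{sec:main result}. There is therefore no in-paper argument to compare your proposal against.

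That said, your sketch has a genuine gap at precisely the step you flag as the ``principal obstacle''. The correlation estimate you write down,
\begin{align*}
\PP[G[S]\in\hh,\,G[T]\in\hh]\;\leq\;\PP[G[S\cap T]\in\hh]\cdot 2^{-(2-o(1))c_\hh(p)\binom{k-j}{2}},
\end{align*}
is exactly what one gets by using heredity to pass to the three vertex-disjoint pieces $S\cap T$, $S\setminus T$, $T\setminus S$ and then invoking edge-independence. But this discards the $2j(k-j)$ edges joining $S\cap T$ to the two tails, and that loss is fatal: with $k\approx (1-\varepsilon)\,2\log_2 n/c_\hh(p)$, already the $j=1$ contribution to $\E[X_U^2]/\E[X_U]^2$ is of order $(k^2/n)\cdot 2^{(2-o(1))c_\hh(p)(k-1)}$, which is a positive power of $n$ rather than $o(1)$. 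So the asserted conclusion $\E[X_U^2]=(1+o(1))\E[X_U]^2$ does not follow from this bound. A second, independent problem is that Janson's inequality requires the indicator events to be monotone in the edge set; the events $[G[S]\in\hh]$ are monotone when $\hh$ is the class of empty (or complete) graphs, but not for a general hereditary property, so the inequality is not available in the form you invoke.

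These obstructions are why the Bollob\'as--Thomason argument does not run a direct second-moment or Janson computation on $\hh$-good sets. Their proof passes through the structure theory of types (what this paper calls CRGs): one takes a CRG $K$ realising $c_\hh(p)$ via Lemma~\ref{lem:c and g}, and the upper bound on $\chi_\hh(G)$ is then obtained by partitioning $V(G)$ according to the vertex classes of $K$, reducing to finding large cliques or independent sets in $\GG(n,p)$, where the events \emph{are} monotone and the classical concentration machinery applies.
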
~\\

Bollob\'as' classic asymptotic result~\cite{BollobasChromatic} on the chromatic number of the random graph can be derived from Theorem~\ref{thm:H-chromatic number} by observing that if $\hh_{\rm em}$ is the property of being an empty graph, then $c_{\hh_{\rm em}}(p) = -\log_2 (1-p)$ and so $\chi_{\hh_{\rm em}}(G)=(1+o(1))\, \frac{n}{2 \log_{1/(1-p)} n}$~\aas \\

However, the fact that $c_{\hh_{\rm em}}(p)=-\log_2 (1-p)$ can itself be derived from Theorem~\ref{thm:c and ed} and the entirely trivial observation that $\ed_{\hh_{\rm em}}(p)=p$. In general, $\chi_{\hh}$ has a close relationship with both $c_{\hh}$ and $\ed_{\hh}$. \\

The rest of the paper is organized as follows: In Section~\ref{sec:CRGs}, we discuss colored regularity graphs (CRGs) and prove some basic results that have the potential to apply to a wide variety of edit distance results beyond the scope of this paper. In Section~\ref{sec:main result}, we give the proof of Theorem~\ref{thm:ed of random graph}. Section~\ref{sec:discussion} includes a proof of the fact that for all $p\in[\phirange]$, $\ed_{\hh}(p)$ can be computed by a set CRGs whose order is bounded by a constant depending only on $\hh$. Section~\ref{sec:discussion} also includes a discussion of the role paths play in CRGs.
In Section~\ref{sec:questions}, we discuss open questions and potential future work. \\
\section{Colored regularity graphs}
\label{sec:CRGs}
In this section, we address colored regularity graphs. In Section~\ref{sub-sec:Background on CRGs}, we address background and basic facts about colored regularity graphs. Section~\ref{sub-sec:p-prohibited CRGs} discusses the new notion of $p$-prohibited CRGs. Lemma~\ref{lem:eigenvalue prohibited} and Lemma~\ref{lem:dalmatian p-core} are important new results on $p$-prohibited CRGs. They are proven in Section~\ref{sub-sec:eigenvalue prohibited} and Section~\ref{sub-sec:dalmatian p-core}, respectively.

\subsection{Background on CRGs}
\label{sub-sec:Background on CRGs}

The key element to studying the edit distance problem is the colored regularity graph, which was defined by Alon and Stav~\cite{AlonStavHereditaryStability} but appeared as {\em types} in the prior literature by Bollob\'as and Thomason (see~\cite{BollobasThomasonStructureColoring}).
\begin{defn}
    A \textbf{colored regularity graph} $K$ is a complete graph, together with a partition $\vk = \vw(K)\udot \vb(K)$ of the vertex set into white and black vertices, and a partition $E(K) = \ew(K)\udot\eb(K)\udot\eg(K)$ of the edge set into white, black, and gray edges.  
    
    A CRG $K'$ is called a \textbf{sub-CRG} of CRG $K$ (denoted $K'\subseteq K$) if $K'$ is obtained by deleting some vertices from $K$ and all incident edges. 
\end{defn}~\\

CRGs approximate large graphs and we want to know whether a forbidden graph $F$ is in a graph approximated by a given CRG. We express this in terms of embeddings of graphs into CRGs. \\

\begin{defn}
    A graph $F$ \textbf{embeds into} CRG $K$ (written $F\mapsto K$) if there exists a function $\phi:V(F)\to V(K)$ such that: 
    \begin{itemize}
    	\item If $uv\in E(F)$, then either $\phi(u)=\phi(v)\in\vbk$, or $\phi(u)\phi(v)\in\ebk\cup \egk$ 
    	\item If $uv\in E(F^c)$, then either $\phi(u)=\phi(v)\in\vwk$, or $\phi(u)\phi(v)\in\ewk\cup \egk$.  
    \end{itemize}
\end{defn}~\\

For any CRG $K$, we will treat the elements of $\RR^{V(K)}$ both as functions on $\vk$ and as vectors indexed by the vertices of $K$.  
For any two such $\x,\y\in \RR^{V(K)}$, we define $\langle \x,\y\rangle := \sum_{u\in\vk}\x(u)\y(u)$.   
We also let $M_K(p)\in\RR^{V(K)\times V(K)}$ be the matrix whose $uv$-th entry is 
\begin{equation}
	m_{uv} :=
	\left\{\begin{array}{rl}
		p, &\mbox{$u\neq v$ and $uv\in \ewk$, or $u=v$ and $u\in \vwk$;} \\
		1-p, &\mbox{$u\neq v$ and $uv\in \ebk$, or $u=v$ and $u\in \vbk$;} \\
		0, &\mbox{$u\neq v$ and $uv\in\egk$.}
	\end{array}
	\right.
\end{equation}~\\

The all-ones vector $\one\in\RR^{V(K)}$ is defined by $\one(u) = 1$ for all $u\in \vk$ and the all-zeroes vector is just $\zero = 0\cdot \one$.  
Furthermore, we let $\simplex_K$ be the {\em standard simplex associated to $K$} which consists of all $\x\in\RR^{V(K)}$ so that $\x\geq\zero$ in the component-wise sense and $\langle \x, \one\rangle = 1$.  
The elements of $\simplex_K$ will be called {\em weight vectors}. \\

Now define 
\begin{align*}
	g_K(p,\x) &:= \langle \x,M_K(p)\x\rangle\text{ and }\\
	g_K(p) &:= \min\left\{g_K(p,\x) : \x\in\simplex_K\right\}.  
\end{align*}
A weight vector $\x\in\simplex_K$ is said to be {\em optimal for $K$} if $g_K(p,\x) = g_K(p)$.  
For any $p\in [0,1]$, a CRG $K$ is said to be {\em $p$-core} if for any optimal weight vector $\x$, $\x(u) > 0$ for all $u\in V(K)$.  
It follows that for $K$ a $p$-core CRG, there exists a unique optimal weight vector. \\

For any hereditary property $\hh = \forb(\ff)$ we define the following family of CRGs: 
\begin{align*}
	\kk_\hh := \{K\text{ a CRG} : F\not\mapsto K\text{ for all }F\in\ff\}.  
\end{align*}~\\

Theorem \ref{thm:inf = min} is the main technique for computing $\ed_\hh(p)$, hence understanding the set $\kk_\hh$ is crucial to understanding $\ed_\hh(p)$. The first equality was given by Balogh and Martin~\cite{BaloghMartinEditComputation} and the second by Marchant and Thomason~\cite{MarchantThomasonExtremalColors}. \\

\begin{thm}\label{thm:inf = min}
	Let $\hh$ be a nontrivial hereditary property.  
	Then for all $p\in [0,1]$, 
	\begin{equation}
		\ed_\hh(p) = \inf_{K\in\kk_\hh}g_K(p) = \min_{K\in\kk_\hh} g_K(p).  
	\end{equation}
\end{thm}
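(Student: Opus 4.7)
The plan is to split Theorem~\ref{thm:inf = min} into its two equalities and handle them separately. The first equality $\ed_\hh(p) = \inf_{K \in \kk_\hh} g_K(p)$ (due to Balogh and Martin) requires matching upper and lower bounds, and the second (that the infimum is attained) is the Marchant--Thomason contribution. For the \textbf{upper bound} $\ed_\hh(p) \leq g_K(p)$ for any fixed $K \in \kk_\hh$, let $\x \in \simplex_K$ be an optimal weight vector. For large $n$, partition $[n]$ into parts $\{V_u\}_{u \in V(K)}$ with $|V_u| = (1+o(1))\x(u)n$. Given $G \sim \GG(n,p)$, construct $H$ from $G$ by editing inside each $V_u$ and between each pair $V_u, V_v$ according to the coloring of $K$: empty for white, complete for black, unchanged for gray. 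If an induced copy of some $F \in \ff$ existed in $H$, assigning each vertex of $F$ to its part label would yield an embedding $F \mapsto K$, contradicting $K \in \kk_\hh$; hence $H \in \hh$. A direct computation shows the expected edit count is $(1+o(1))\, g_K(p,\x) \binom{n}{2} = (1+o(1))\, g_K(p) \binom{n}{2}$, so Theorem~\ref{thm:distance to random graph} yields $\ed_\hh(p) \leq g_K(p)$.

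For the \textbf{lower bound}, I would apply Szemer\'edi's Regularity Lemma. Fix $\varepsilon > 0$, take $G$ of density $p$ nearly attaining $\ed_\hh(p)$, and let $H \in \hh$ minimize $\dist(G, H)$. Apply the Regularity Lemma simultaneously to $G$ and $H$ to produce a common partition $V_1 \udot \cdots \udot V_m$ which is regular for both. Build a CRG $K$ on $[m]$: color vertex $i$ white/black according as $d_H(V_i)$ is close to $0$ or to $1$, and color edge $ij$ white/gray/black according as $d_H(V_i, V_j)$ is close to $0$, intermediate, or close to $1$. A standard counting-lemma argument shows $K \in \kk_\hh$, because any embedding $F \mapsto K$ for $F \in \ff$ would force an induced copy of $F$ in $H$. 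Taking $\x(i) = |V_i|/n$ yields $g_K(p, \x) \leq \dist(G, H) \binom{n}{2}^{-1} + O(\varepsilon)$, and letting $\varepsilon \to 0$ gives $\ed_\hh(p) \leq \inf_{K \in \kk_\hh} g_K(p)$.

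The \textbf{main obstacle} is the second equality. In general $\kk_\hh$ contains CRGs of unbounded order, so a naive compactness argument fails. The plan is first to reduce to $p$-core CRGs: given any $K \in \kk_\hh$, restricting to the support of an optimal $\x$ yields a sub-CRG $K' \subseteq K$ which is $p$-core with $g_{K'}(p) = g_K(p)$ and $K' \in \kk_\hh$. On $p$-core CRGs of any fixed order, $g_K(p)$ is attained by finite-dimensional compactness of $\simplex_K$, so the task becomes showing that some infimizing sequence of $p$-core CRGs has bounded order. Following Marchant and Thomason, I would argue that any infimizing sequence $\{K_n\}$ of $p$-core CRGs with $|V(K_n)| \to \infty$ can be pruned: one extracts convergent subsequences of optimal weight vectors and edge/vertex colorings, then contracts or deletes low-weight vertices to produce a bounded-size ``limit'' CRG still lying in $\kk_\hh$ and attaining the infimum value. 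Verifying that this limit object respects all the forbidden embeddings $F \mapsto K$ from the original family $\ff$ is the deepest part of the argument.
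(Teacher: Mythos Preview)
The paper does not prove Theorem~\ref{thm:inf = min}; it is stated as a cited result, with the first equality attributed to Balogh and Martin~\cite{BaloghMartinEditComputation} and the second to Marchant and Thomason~\cite{MarchantThomasonExtremalColors}. There is therefore no in-paper proof to compare your proposal against.

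Your sketch is broadly faithful to how those cited arguments proceed, but two points deserve correction. First, at the end of your lower-bound paragraph you conclude $\ed_\hh(p) \leq \inf_{K\in\kk_\hh} g_K(p)$, but that is the inequality you already established in the upper-bound paragraph; your regularity argument actually yields the reverse inequality $\inf_{K\in\kk_\hh} g_K(p) \leq \ed_\hh(p)$, which is what is needed. (Also, $\dist(G,H)$ is already normalized by $\binom{n}{2}^{-1}$ in this paper, so the extra factor should be dropped.) Second, and more substantively, in comparing $g_K(p,\x)$ to $\dist(G,H)$ you are implicitly assuming that the $G$-density on every white pair $(V_i,V_j)$ is close to $p$; for an arbitrary density-$p$ graph this need not hold, and the standard remedy is to take $G\sim\GG(n,p)$ via Theorem~\ref{thm:distance to random graph} so that all pair densities concentrate. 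A related gap is the vertex-coloring step: you color vertex $i$ white or black according as $d_H(V_i)$ is near $0$ or $1$, but for a generic $H\in\hh$ the internal density $d_H(V_i)$ may be intermediate, so one must either argue that an optimal $H$ avoids this or refine the construction. With these adjustments the outline matches the literature.
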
~\\

It follows by definition that the minimum in Theorem~\ref{thm:inf = min} is obtained by a $p$-core CRG and as Theorem~\ref{thm:p-core colors} shows, $p$-core CRGs have a well-defined structure. \\

\begin{thm}[Marchant-Thomason~\cite{MarchantThomasonExtremalColors}]\label{thm:p-core colors}
    Let $p\in [0,1]$ and suppose $K$ is a $p$-core CRG.  
    \begin{enumerate}[(a)]
        \item If $p\in [0,1/2]$, then $\ebk = \emptyset$ and for all $uv\in \ewk$, $u,v\in\vbk$.  
        \item If $p\in [1/2,1]$, then $\ewk = \emptyset$ and for all $uv\in \ebk$, $u,v\in \vwk$.  
    \end{enumerate}
\end{thm}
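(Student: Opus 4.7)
The plan is to prove the statement via a standard weight-redistribution (``merge'') argument. By swapping $p\leftrightarrow 1-p$ and simultaneously complementing the vertex and edge colors of $K$, the two statements (a) and (b) are equivalent (this is the involution $M_{K^c}(p) = M_K(1-p)$), so I would prove (a) and let (b) follow by duality. Throughout, let $\x\in\simplex_K$ be the unique optimizer of $g_K(p,\cdot)$, which by the $p$-core hypothesis is fully supported.

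First I would record the first-order optimality condition. Since $\x$ lies in the interior of $\simplex_K$, the Lagrange multiplier rule applied to the constraint $\langle\x,\one\rangle=1$, together with $\nabla_\x g_K(p,\x)=2M_K(p)\x$, forces $M_K(p)\x=g_K(p)\,\one$. Then, for any two distinct $u,v\in V(K)$, I would consider the merge variation $\x'\in\simplex_K$ defined by $\x'(u)=0$, $\x'(v)=\x(u)+\x(v)$, and $\x'(w)=\x(w)$ for every other $w$. Expanding and using the first-order identity to kill the linear cross-terms gives
\begin{align*}
g_K(p,\x')-g_K(p,\x)=\x(u)^2\bigl(m_{uu}+m_{vv}-2m_{uv}\bigr).
\end{align*}
Since $K$ is $p$-core and $\x'$ has a zero entry, $\x'$ cannot be an optimizer, forcing $m_{uu}+m_{vv}-2m_{uv}>0$ for every pair $u\neq v$.

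I would then perform a case analysis at $p\leq 1/2$, examining all combinations of vertex colors and edge color. Every black-edge configuration produces a nonpositive value (both endpoints black: $0$; mixed: $2p-1$; both white: $4p-2$), ruling out $\ebk$ entirely; a white edge between two white vertices contributes $0$ and is similarly excluded. At $p=1/2$ every non-gray edge evaluates to $0$, so the full conclusion follows immediately at the boundary.

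The main obstacle is the remaining case: a white edge $uv$ with $u\in\vwk$, $v\in\vbk$ at $p<1/2$, where the pairwise merge only gives $1-2p>0$ and therefore does not contradict the $p$-core hypothesis by itself. To finish this case, I would subtract the $u$- and $v$-rows of $M_K(p)\x=g_K(p)\one$ to obtain the identity
\begin{align*}
(1-2p)\,\x(v)=\sum_{w\notin\{u,v\}}\bigl(m_{uw}-m_{vw}\bigr)\x(w),
\end{align*}
then combine with $\ebk=\emptyset$ (already established) to deduce the existence of a vertex $w$ with $uw\in\ewk$ and $vw\in\egk$; applying the pairwise merge to the pair $(u,w)$ then forces $w\in\vbk$. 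Armed with this structural data, the remaining task is to construct a multi-vertex perturbation $\bdelta\in\one^\perp$ (e.g., an appropriate integer combination of $\be_u$, $\be_v$, and several such $w$'s, with weights possibly dependent on $p$) for which $\bdelta^{T}M_K(p)\bdelta\leq 0$, contradicting the strict positive-definiteness of $M_K(p)$ restricted to $\one^\perp$ that follows from the uniqueness of $\x$. Locating such a perturbation uniformly in $p\in(0,1/2)$ (the three-vertex direction $2\be_u-\be_v-\be_w$ only works for $p>1/3$, so longer combinations are needed in general) is the technical heart of the argument.
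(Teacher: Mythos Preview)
The paper does not prove this theorem; it is quoted as a result of Marchant and Thomason and used as a black box throughout, so there is no in-paper argument to compare against. I can only comment on the proposal itself.

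Your setup is correct: the first-order condition $M_K(p)\x=g_K(p)\one$ holds (this is Theorem~\ref{thm:degree} in the paper), your merge computation $g_K(p,\x')-g_K(p,\x)=\x(u)^2(m_{uu}+m_{vv}-2m_{uv})$ is right, and the ensuing case analysis cleanly eliminates all black edges and all white edges between two white vertices when $p\le 1/2$.

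The genuine gap is exactly where you locate it: the configuration $u\in\vwk$, $v\in\vbk$, $uv\in\ewk$ at $p<1/2$. You propose to find a nonzero $\bdelta\in\one^\perp$ with $\langle\bdelta, M_K(p)\bdelta\rangle\le 0$, but you do not produce one, and the obstruction is structural rather than merely technical. On the three vertices $\{u,v,w\}$ you construct, the minimum of $\langle\bdelta, M_J(p)\bdelta\rangle$ over all of $\one^\perp$ (not just the direction $2\be_u-\be_v-\be_w$) is a positive multiple of $1-3p$, so for $p<1/3$ no three-vertex direction whatsoever works. More generally, on a set $\{u,w_1,\dots,w_k\}$ with $u$ white, each $w_i$ black, every $uw_i$ white, and every $w_iw_j$ gray, one computes the minimum of the form on $\one^\perp$ to be a positive multiple of $(1-p)/k-p$; hence at least $\lceil(1-p)/p\rceil$ such vertices are needed, and this blows up as $p\to 0$. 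Your row-subtraction identity produces exactly one auxiliary vertex $w$ and gives no control over how many exist, nor over the colours of the edges among them (any white $w_iw_j$ pushes the quadratic form back up). As written, the plan does not close, and there is no evident reason a perturbation localised to the white neighbourhood of $u$ can be made to work uniformly over $p\in(0,1/2)$.
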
~\\

To summarize, if $p\leq 1/2$, then a $p$-core CRG has no black edges and all white edges must be between black vertices. If $p\geq 1/2$, then a $p$-core CRG has no white edges and all black edges must be between white vertices. As a result, if $p=1/2$, $p$-core CRGs have neither black nor white edges. \\

\begin{rem}\label{rem:1/2 core}
    A CRG $K$ is $1/2$-core if and only if all edges of $K$ are gray.  
\end{rem}~\\

\subsection{$p$-prohibited CRGs}
\label{sub-sec:p-prohibited CRGs}

In this paper, we introduce the notion of a prohibited CRG.\\ 
\begin{defn}
    For any $p\in [0,1]$ and any CRG $J$, we say that $J$ is \textbf{$p$-prohibited} if for any $p$-core CRG $K$, $J$ is not a sub-CRG of $K$.  
\end{defn}~\\

For example, Theorem~\ref{thm:p-core colors} shows that if $p\in [0,1/2)$, then the only $2$-vertex CRGs that are not $p$-prohibited are those with a gray edge or the CRG with two black vertices and a white edge. See Figure~\ref{fig:p-prohibited}. \\

\begin{rem}\label{rem:prohibited vs core}
    There is an abundance of CRGs which are neither $p$-core nor $p$-prohibited.  For example, consider the CRGs $K$ and $K'$ defined as follows.  
    Let $K$ consist of $3$ black vertices with $2$ white edges and $1$ gray edge.  
    For all $p\in [0,1]$, $K$ is not $p$-core.  
    Now let $K'$ be the CRG on $4$ black vertices whose white edges induce a $P_4$ and all other edges are gray.  
    Clearly $K'$ contains $K$.  
    It is an exercise to see that $K'$ is $p$-core for all $p\in [0,1-\varphi^{-1})$, so $K$ is not $p$-prohibited on this interval.  
\end{rem}~\\

\begin{figure}[ht]
    \centering
    \scalebox{.6}[.6]{
        \usetikzlibrary{backgrounds}

\begin{tikzpicture}

	\begin{scope}
		\node [circle, draw=black, fill=white, minimum size = 1cm] (v1) at (-2,2) {};
		\node [circle, draw=black, fill=white, minimum size = 1cm] (v2) at (0,2) {};	
		\scoped[on background layer]{
    		\draw [double distance=4mm,thin]  (v1.center) -- (v2.center);
    	}
	\end{scope}
	
	\begin{scope}[xshift=4cm]
		\node [circle, draw=black, fill=white, minimum size = 1cm] (v1) at (-2,2) {};
		\node [circle, fill=black, minimum size = 1cm] (v2) at (0,2) {};	
		\scoped[on background layer]{
    		\draw [double distance=4mm,thin]  (v1.center) -- (v2.center);
    	}
	\end{scope}	
	
	\begin{scope}[yshift=-2cm]
		\node [circle, draw=white, fill=black, minimum size = 1cm] (v1) at (-2,2) {};
		\node [circle, draw=white, fill=black, minimum size = 1cm] (v2) at (0,2) {};	
		\scoped[on background layer]{
    		\draw [line width = .35cm,draw=black]  (v1.center) -- (v2.center);
    	}
	\end{scope}		 
	
	\begin{scope}[yshift=-2cm, xshift=4cm]
		\node [circle, draw=white, fill=black, minimum size = 1cm] (v1) at (-2,2) {};
		\node [circle, draw=black, fill=white, minimum size = 1cm] (v2) at (0,2) {};	
		\scoped[on background layer]{
    		\draw [line width = .35cm,draw=black]  (v1.center) -- (v2.center);
    	}
	\end{scope}
	\begin{scope}[yshift=-4cm, xshift=2cm]
		\node [circle, draw=black, fill=white, minimum size = 1cm] (v1) at (-2,2) {};
		\node [circle, draw=black, fill=white, minimum size = 1cm] (v2) at (0,2) {};	
		\scoped[on background layer]{
    		\draw [line width = .35cm,draw=black]  (v1.center) -- (v2.center);
    	}
	\end{scope}		 
	
	\begin{scope}[yshift=-.5cm, xshift=11cm]
		\node [circle, draw=white, fill=black, minimum size = 1cm] (v1) at (-4,1.5) {};
		\node [circle, draw=white, fill=black, minimum size = 1cm] (v2) at (-2,1.5) {};	
		\scoped[on background layer]{
    		\draw [double distance=4mm,thin]  (v1.center) -- (v2.center);
    	}
	\end{scope}		 
	
	\begin{scope}[yshift=-1.5cm, xshift=9cm]
		\node [circle, draw=white, fill=black, minimum size = 1cm] (v1) at (-2,0.5) {};
		\node [circle, draw=white, fill=black, minimum size = 1cm] (v2) at (0,0.5) {};	
		\scoped[on background layer]{
    		\draw [line width = .35cm,draw=gray]  (v1.center) -- (v2.center);
    	}
	\end{scope}

	\begin{scope}[yshift=-2.5cm, xshift=13cm]
		\node [circle, draw=white, fill=black, minimum size = 1cm] (v1) at (-2,3.5) {};
		\node [circle, draw=black, fill=white, minimum size = 1cm] (v2) at (0,3.5) {};	
		\scoped[on background layer]{
    		\draw [line width = .35cm,draw=gray]  (v1.center) -- (v2.center);
    	}
	\end{scope}		 
	
	\begin{scope}[yshift=-4cm, xshift=11cm, shift={(0,0.5)}]
		\node [circle, draw=black, fill=white, minimum size = 1cm] (v1) at (0,2.5) {};
		\node [circle, draw=black, fill=white, minimum size = 1cm] (v2) at (2,2.5) {};	
		\scoped[on background layer]{
    		\draw [line width=0.35cm,draw=gray]  (v1.center) -- (v2.center);
    	}
	\end{scope}

\end{tikzpicture}
    }
    \caption{All two-vertex CRGs. The five on the left are $p$-prohibited for all $p\in [0,1/2]$. The four on the right are $p$-core for all $p\in [0,1/2)$.}
    \label{fig:p-prohibited}
\end{figure}
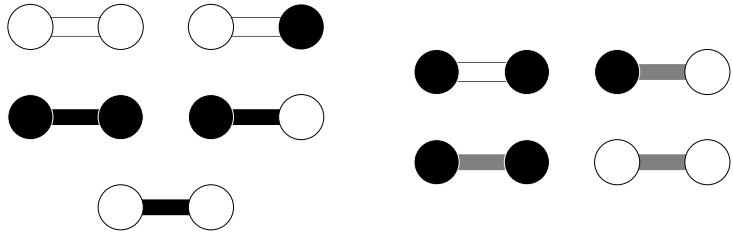~\\

We also want to introduce the notion of the complement of a CRG. \\

\begin{defn}
    If $K$ is a CRG, then the \textbf{complement} of $K$ is the unique CRG $\overline{K}$, such that 
    \begin{itemize}
        \item $\vw(\overline{K})=\vbk$, $\vb(\overline{K})=\vwk$, 
        \item $\ew(\overline{K})=\ebk$, $\eb(\overline{K})=\ewk$, and $\eg(\overline{K})=\egk$.
    \end{itemize}
\end{defn}~\\

For a graph $G$, the notation is $G^c$ is used to denote the graph complement, so as to avoid confusion. There is symmetry in the edit distance function about $p=1/2$ with respect to complements. \\

\begin{prop}\label{prop:complement g}
    If $p\in[0,1]$ and $K$ is a CRG, then $g_K(p)=g_{\overline{K}}(1-p)$. 
\end{prop}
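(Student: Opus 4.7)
The plan is to prove this by showing that the underlying quadratic forms are literally equal, so the minima over the (common) simplex must agree. The key observation is that forming $\overline{K}$ swaps the roles of white and black (for both vertices and edges), while leaving gray edges fixed; and the definition of $M_K(p)$ assigns weight $p$ to white features, $1-p$ to black features, and $0$ to gray edges. Swapping white and black is therefore exactly compensated by swapping $p$ and $1-p$.

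Concretely, I would first verify entry by entry that $M_{\overline{K}}(1-p) = M_K(p)$ as matrices indexed by $V(K) = V(\overline{K})$. There are five cases to check, corresponding to the diagonal entries (for $u \in \vwk$ and $u \in \vbk$) and the off-diagonal entries (for $uv \in \ewk$, $uv \in \ebk$, and $uv \in \egk$). For instance, if $u \in \vwk$, then $u \in \vb(\overline{K})$, so the $(u,u)$-entry of $M_{\overline{K}}(1-p)$ is $1 - (1-p) = p$, matching the $(u,u)$-entry of $M_K(p)$. The remaining four cases are completely analogous, and the gray case is immediate since $0$ depends on neither $p$ nor the coloring.

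Once the matrix identity is established, the conclusion is immediate: since $V(K) = V(\overline{K})$, the simplices $\simplex_K$ and $\simplex_{\overline{K}}$ coincide, so
\begin{align*}
    g_K(p)
    = \min_{\x \in \simplex_K} \langle \x, M_K(p)\x\rangle
    = \min_{\x \in \simplex_{\overline{K}}} \langle \x, M_{\overline{K}}(1-p)\x\rangle
    = g_{\overline{K}}(1-p).
\end{align*}
There is no real obstacle here; the statement is essentially a bookkeeping observation about the definitions of $M_K(p)$ and $\overline{K}$. The only thing to be careful about is to check all five entry-types cleanly, so that the identity $M_{\overline{K}}(1-p) = M_K(p)$ is justified without appealing to any properties of $p$-core CRGs or of optimal weight vectors.
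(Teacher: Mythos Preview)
Your proposal is correct and matches the paper's own proof exactly: the paper also asserts the matrix identity $M_K(p)=M_{\overline{K}}(1-p)$ and deduces equality of the minima over the common simplex. Your version is simply more explicit about verifying the entrywise identity.
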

\begin{proof}  
    This follows from the equality of the matrices $M_K(p)=M_{\overline{K}}(1-p)$:
    \begin{align*}
        g_K(p)  &=  \min\left\{\langle \x,M_K(p)\x\rangle : \x\in\simplex_K\right\} \\
                &=  \min\left\{\langle \x,M_{\overline{K}}(1-p)\x\rangle : \x\in\simplex_K\right\} = g_{\overline{K}}(1-p) . \qedhere
    \end{align*}
\end{proof}~\\

There is also symmetry in the edit distance function about $p=1/2$ when it comes to $p$-prohibition. \\

\begin{prop}\label{prop:Jbar prohibited}
    For all $p\in[0,1]$, a CRG $J$ is $p$-prohibited if and only if $\overline{J}$ is $(1-p)$-prohibited. 
\end{prop}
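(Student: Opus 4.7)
The plan is to reduce the statement to two auxiliary observations: (i) complementation is an involution on CRGs that preserves the sub-CRG relation, and (ii) complementation interchanges $p$-core CRGs with $(1-p)$-core CRGs. Once both are in place, the biconditional follows by a one-line contrapositive.

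For (i), observe directly from the definition of $\overline{K}$ that $\overline{\overline{K}} = K$, and that if $J$ is obtained from $K$ by deleting a set of vertices and their incident edges, then $\overline{J}$ is obtained from $\overline{K}$ by deleting the same set of vertices and their incident edges (the colors of the surviving vertices and edges are simply swapped according to the same rule). Hence $J \subseteq K$ if and only if $\overline{J} \subseteq \overline{K}$.

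For (ii), note that the matrix $M_K(p)$ depends only on the edge-coloring data of $K$ in a way that is symmetric under swapping $p \leftrightarrow 1-p$ and simultaneously complementing the CRG; this is precisely the identity $M_K(p) = M_{\overline{K}}(1-p)$ used in the proof of Proposition~\ref{prop:complement g}. Since $\simplex_K = \simplex_{\overline{K}}$ (the two CRGs share a vertex set) and the two quadratic forms agree on this simplex, a weight vector $\x$ is optimal for $K$ at $p$ if and only if it is optimal for $\overline{K}$ at $1-p$. Consequently, $K$ is $p$-core if and only if $\overline{K}$ is $(1-p)$-core.

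Combining (i) and (ii), the proof is immediate: suppose $J$ is $p$-prohibited, and let $K'$ be any $(1-p)$-core CRG. Setting $K := \overline{K'}$, part (ii) gives that $K$ is $p$-core, so by hypothesis $J \not\subseteq K$. By (i), $\overline{J} \not\subseteq \overline{K} = K'$, which shows that $\overline{J}$ is $(1-p)$-prohibited. The converse direction follows by applying the same argument to $\overline{J}$ in place of $J$ and $1-p$ in place of $p$, using $\overline{\overline{J}} = J$. No step here is a genuine obstacle; the only point to take care with is verifying the equivalence of optimal weight vectors in (ii), but this is a direct consequence of Proposition~\ref{prop:complement g} and the fact that optimality is determined by a minimization over the same simplex.
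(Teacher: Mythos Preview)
Your proof is correct and follows essentially the same approach as the paper's: both reduce to the observation that $K$ is $p$-core if and only if $\overline{K}$ is $(1-p)$-core, derived from the matrix identity $M_K(p)=M_{\overline{K}}(1-p)$ of Proposition~\ref{prop:complement g}. Your verification of this fact is arguably a touch more direct---you use the definition of $p$-core via positivity of every optimal weight vector and note that the sets of optimal weight vectors for $K$ at $p$ and for $\overline{K}$ at $1-p$ coincide---whereas the paper argues by contradiction through the equivalent characterization that a non-$p$-core CRG admits a proper sub-CRG with the same $g$-value; but this is a stylistic rather than substantive difference.
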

\begin{proof}
    Suppose $J$ is $p$-prohibited but $\overline{J}$ is not $(1-p)$-prohibited. Then, there is a $(1-p)$-core CRG $\overline{K}$ that contains $\overline{J}$ as a sub-CRG. If $K$ is not $p$-core, then there is a $K'\subseteq K$ such that $g_{K'}(p)=g_K(p)$, but Proposition~\ref{prop:complement g} gives that $g_{\overline{K'}}(1-p)=g_{\overline{K}}(1-p)$, a contradiction to $\overline{K}$ being $(1-p)$-core.
\end{proof}~\\

Next, we introduce terminology which is useful in describing the structure of $p$-core and $p$-prohibited CRGs. \\ 

\begin{defn}\label{defn:underlying}
    Let $K$ be a CRG.
    \begin{itemize}
        \item The \textbf{underlying graph} of $K$ is the graph $G = (\vk, \linebreak[1] \ebk \cup \ewk)$.  
        \item A \textbf{component} of $K$ is a component of the underlying graph of $K$.
        \item A \textbf{disjoint union} of vertex-disjoint CRGs $J,K$, denoted $J\oplus K$, is a CRG with vertex set $V_J\oplus V_K$, where the sub-CRG induced on $V_J$ is isomorphic to $J$, the sub-CRG induced on $V_K$ is isomorphic to $K$, and every edge incident to a vertex in each of $V_J$ and in $V_K$ has color gray. The disjoint union of $k$ copies of $K$ is $k\cdot K$. 
        \item Let $G$ be a nonempty graph. The CRG, $K$, \textbf{associated} to $G$ is defined as follows: If $p\in [0,1/2]$, then $\vwk=\ebk=\emptyset$, $\vbk=V(G)$, $\ewk=E(G)$, and $\egk=E(\overline{G})$. If $p\in (1/2,1]$, then $\vbk=\ewk=\emptyset$, $\vwk=V(G)$, $\ebk=E(G)$, and $\egk=E(\overline{G})$. 
    \end{itemize}
\end{defn}~\\

We associate CRGs to graphs for the purposes of discussing $p$-core CRGs.  
See Figure~\ref{fig:component example} for an example.  
Since $1/2$-core CRGs are precisely those which have only gray edges, the definition of the CRG associated to a graph for $p=1/2$ is made purely out of convenience.  

\begin{figure}[ht]
    \centering
        \scalebox{.6}[.6]{
            \usetikzlibrary{backgrounds}

\begin{tikzpicture}
	\pgfdeclarelayer{background}
	\pgfdeclarelayer{foreground}
	\pgfsetlayers{background,main,foreground}
	
	\begin{pgfonlayer}{foreground}
	    \node [circle, draw=white, fill=black, minimum size = 1cm] (b1) at (90:4) {};
	    \node [circle, draw=white, fill=black, minimum size = 1cm] (b2) at (50:4) {};
	    \node [circle, draw=white, fill=black, minimum size = 1cm] (b3) at (10:4) {};
	    \node [circle, draw=white, fill=black, minimum size = 1cm] (b4) at (-30:4) {};

	    \node [circle, draw=white, fill=black, minimum size = 1cm] (b5) at (-70:4) {};
	    \node [circle, draw=white, fill=black, minimum size = 1cm] (b6) at (-110:4) {};

	    \node [circle, draw=black, fill=white, minimum size = 1cm] (w1) at (130:4) {};
	    \node [circle, draw=black, fill=white, minimum size = 1cm] (w2) at (170:4) {};
	    \node [circle, draw=black, fill=white, minimum size = 1cm] (w3) at (210:4) {};
	\end{pgfonlayer}
	\begin{pgfonlayer}{main}
    	\draw [double distance=5mm,thick]  (b1.center) -- (b2.center);
    	\draw [double distance=5mm,thick]  (b2.center) -- (b3.center);
    	\draw [double distance=5mm,thick]  (b3.center) -- (b4.center);
    	\draw [double distance=5mm,thick]  (b1.center) -- (b4.center);
    	
		\draw [double distance=5mm,thick] (b5.center) -- (b6.center);
 	\end{pgfonlayer}
 	
 	\begin{pgfonlayer}{background}
		\draw [line width = .25cm,draw=gray] (b1.center) -- (b3.center);
		\draw [line width = .25cm,draw=gray] (b1.center) -- (b4.center);
		\draw [line width = .25cm,draw=gray] (b2.center) -- (b4.center);

		\draw [line width = .25cm,draw=gray] (w1.center) -- (w2.center);
		\draw [line width = .25cm,draw=gray] (w1.center) -- (w3.center);
		\draw [line width = .25cm,draw=gray] (w2.center) -- (w3.center);

		\draw [line width = .25cm,draw=gray] (w1.center) -- (b1.center);
		\draw [line width = .25cm,draw=gray] (w1.center) -- (b2.center);
		\draw [line width = .25cm,draw=gray] (w1.center) -- (b3.center);
		\draw [line width = .25cm,draw=gray] (w1.center) -- (b4.center);
		\draw [line width = .25cm,draw=gray] (w1.center) -- (b5.center);
		\draw [line width = .25cm,draw=gray] (w1.center) -- (b6.center);

		\draw [line width = .25cm,draw=gray] (w2.center) -- (b1.center);
		\draw [line width = .25cm,draw=gray] (w2.center) -- (b2.center);
		\draw [line width = .25cm,draw=gray] (w2.center) -- (b3.center);
		\draw [line width = .25cm,draw=gray] (w2.center) -- (b4.center);
		\draw [line width = .25cm,draw=gray] (w2.center) -- (b5.center);
		\draw [line width = .25cm,draw=gray] (w2.center) -- (b6.center);

		\draw [line width = .25cm,draw=gray] (w3.center) -- (b1.center);
		\draw [line width = .25cm,draw=gray] (w3.center) -- (b2.center);
		\draw [line width = .25cm,draw=gray] (w3.center) -- (b3.center);
		\draw [line width = .25cm,draw=gray] (w3.center) -- (b4.center);
		\draw [line width = .25cm,draw=gray] (w3.center) -- (b5.center);
		\draw [line width = .25cm,draw=gray] (w3.center) -- (b6.center);

		\draw [line width = .25cm,draw=gray] (b5.center) -- (b1.center);
		\draw [line width = .25cm,draw=gray] (b5.center) -- (b2.center);
		\draw [line width = .25cm,draw=gray] (b5.center) -- (b3.center);
		\draw [line width = .25cm,draw=gray] (b5.center) -- (b4.center);

		\draw [line width = .25cm,draw=gray] (b6.center) -- (b1.center);
		\draw [line width = .25cm,draw=gray] (b6.center) -- (b2.center);
		\draw [line width = .25cm,draw=gray] (b6.center) -- (b3.center);
		\draw [line width = .25cm,draw=gray] (b6.center) -- (b4.center);
	\end{pgfonlayer}
\end{tikzpicture}
        }
    \caption{
        A CRG with $5$ components.  
        One component is the CRG associated to the cycle $C_4$ for $p\in [0,1/2)$.  
        The edges satisfy the necessary conditions from Theorem \ref{thm:p-core colors} for a $p$-core CRG with $p\in [0,1/2)$.  
    }
    \label{fig:component example}
\end{figure}
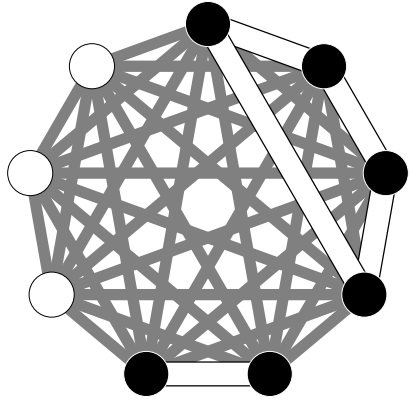~\\

In order to apply Lemma~\ref{lem:eigenvalue prohibited} below, we need the minimum adjacency eigenvalue to be at most $-1$. This occurs for all nonempty graphs. See~\cite{SpectraGraphBook} for a more detailed discussion about eigenvalues associated to graphs.  
\\

\begin{prop}\label{prop:small eigenvalue}
    Every nonempty graph that is not disjoint cliques has minimum adjacency eigenvalue at most $-\sqrt{2}$. 
    If a nonempty graph consists of disjoint cliques, its minimum adjacency eigenvalue is $-1$. 
    An empty graph has all adjacency eigenvalues zero.
\end{prop}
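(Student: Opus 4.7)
The plan is to break into three cases corresponding to the three sentences of the statement. The empty-graph case is immediate: the adjacency matrix is the zero matrix, whose eigenvalues are all $0$.

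For the disjoint-cliques case, I would compute the spectrum component-wise. The adjacency matrix of $K_m$ has eigenvalue $m-1$ with multiplicity $1$ and eigenvalue $-1$ with multiplicity $m-1$; isolated vertices contribute a single eigenvalue $0$ each. Since the spectrum of a disjoint union of graphs is the multiset union of the spectra of its components, a nonempty disjoint union of cliques must contain some $K_m$ with $m\geq 2$, which contributes an eigenvalue of $-1$, and no component contributes anything smaller than $-1$. Hence the minimum eigenvalue is exactly $-1$.

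The substantive case is a nonempty graph $G$ that is not a disjoint union of cliques. The key observation is that a graph is a disjoint union of cliques if and only if it contains no induced $P_3$: the ``if'' direction follows because $P_3$-freeness forces the relation ``equal or adjacent'' to be transitive, so its equivalence classes are cliques. Therefore $G$ contains $P_3$ as an induced subgraph. I would then invoke the Cauchy eigenvalue interlacing theorem applied to the principal submatrix of the adjacency matrix of $G$ corresponding to this induced $P_3$: the minimum eigenvalue of $G$ is at most the minimum eigenvalue of $P_3$. A direct computation shows that the adjacency matrix of $P_3$ has eigenvalues $\sqrt{2},0,-\sqrt{2}$, so the minimum eigenvalue of $G$ is at most $-\sqrt{2}$, completing the proof.

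There is no real obstacle here; the only point worth flagging is the characterization of disjoint-clique graphs as $P_3$-free, which is a standard fact but needs to be stated explicitly so the interlacing step has something to bite on.
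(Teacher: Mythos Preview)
Your proof is correct and complete. The paper does not actually prove this proposition; it states it as a known fact and directs the reader to a spectral graph theory reference, so your argument via the $P_3$-free characterization of disjoint unions of cliques together with Cauchy interlacing supplies exactly the justification the paper omits.
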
~\\
%

\begin{lem}\label{lem:eigenvalue prohibited}
	Let $G$ be a nonempty graph and let $\lambda\leq-1$ be the minimum eigenvalue of the adjacency matrix of $G$. The CRG associated to $G$ is $p$-prohibited for all
	\begin{align*}
	    p\in \left[\dfrac{1}{1-\lambda},1-\dfrac{1}{1-\lambda}\right] .  
	\end{align*}
\end{lem}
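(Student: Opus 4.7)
The plan is to argue by contradiction, exploiting that a $p$-core CRG $K$ has matrix $M_K(p)$ strictly positive definite on $\one^\perp$ (a necessary consequence of the uniqueness and interiority of the $p$-core optimizer). By Proposition~\ref{prop:Jbar prohibited} it suffices to treat $p\in[1/(1-\lambda),1/2]$; the upper half of the interval then follows by passing to $\overline{J}$ at $1-p$. Assume for contradiction that the CRG $J$ associated to $G$ is a sub-CRG of a $p$-core CRG $K$, and let $\y>\zero$ be its unique optimal weight vector, satisfying the first-order condition $M_K(p)\y=g_K(p)\one$.

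For $p\leq 1/2$, Theorem~\ref{thm:p-core colors}(a) forces $V(G)$ to be a set of black vertices of $K$, so the principal submatrix of $M_K(p)$ indexed by $V(G)$ equals $M_J(p)=(1-p)I+pA(G)$. Pick a unit eigenvector $\bm{v}\in\RR^{V(G)}$ of $A(G)$ for eigenvalue $\lambda$; since $A(G)$ is entrywise nonnegative and $\lambda<0$, the vector $\bm{v}$ has mixed signs and is nonzero. Extend $\bm{v}$ by zeros to $\tilde{\bm{v}}\in\RR^{V(K)}$ and set $s:=\langle\tilde{\bm{v}},\one\rangle$. The central construction is the perturbation
\[
\bdelta := \tilde{\bm{v}} - s\,\y,
\]
which lies in $\one^\perp$ (its coordinates sum to $s-s=0$) and is nonzero: if $\tilde{\bm{v}}=s\y$, then evaluating on $V(K)\setminus V(G)$ (where $\y>\zero$) forces $s=0$, hence $\tilde{\bm{v}}=\zero$, contradicting $\bm{v}\neq\zero$. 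Using the identities $\tilde{\bm{v}}^T M_K(p)\tilde{\bm{v}}=\bm{v}^T M_J(p)\bm{v}=1-p(1-\lambda)$, $\tilde{\bm{v}}^T M_K(p)\y=g_K(p)\,s$, and $\y^T M_K(p)\y=g_K(p)$, the quadratic form expands cleanly as
\[
\langle\bdelta,M_K(p)\bdelta\rangle \;=\; \bigl(1-p(1-\lambda)\bigr)\;-\;s^2\,g_K(p).
\]

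For $p\in[1/(1-\lambda),1/2]$ the first summand is nonpositive, and $g_K(p)\geq 0$ because $M_K(p)$ has nonnegative entries. Hence $\langle\bdelta,M_K(p)\bdelta\rangle\leq 0$, contradicting strict positive definiteness of $M_K(p)$ on $\one^\perp$ and completing the proof. The principal obstacle is identifying the right perturbation direction: since $\bm{v}$ need not be orthogonal to $\one$, neither $\tilde{\bm{v}}$ itself nor its projection onto $\one^\perp$ produces a nonpositive form value, and the obstruction is subtler than what $M_J(p)|_{\one^\perp}$ alone reveals (this restriction can remain positive definite at $p=1/(1-\lambda)$ when $\bm{v}$ is not orthogonal to $\one$). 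Blending the spectral data of $A(G)$ with the $p$-core optimizer $\y$ at the specific offset $s\y$ causes the cross-term to collapse, delivering the required sign.
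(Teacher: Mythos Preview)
Your proof is correct and takes a genuinely different route from the paper's. Both arguments reduce to producing a nonzero $\bdelta\in\one^\perp$ with $\langle\bdelta,M_K(p)\bdelta\rangle\le 0$ inside a putative $p$-core $K\supseteq J$, which contradicts $p$-coreness (this is exactly the content of the paper's Lemma~\ref{lem:delta prohibited}). The paper manufactures orthogonality to $\one$ by passing to the doubled CRG $2\cdot J$ and taking $\bdelta=(\bm{v},-\bm{v})$, then uses Lemma~\ref{lem:prohibited extension} to transfer $p$-prohibition from $2\cdot J$ back to $J$. You instead stay inside $K$ and subtract the specific multiple $s\y$ of the optimal weight vector; the cross term then collapses via $M_K(p)\y=g_K(p)\one$, leaving $\langle\bdelta,M_K(p)\bdelta\rangle=(1-p(1-\lambda))-s^2 g_K(p)$. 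The paper's route is more modular---it isolates Lemmas~\ref{lem:delta prohibited} and~\ref{lem:prohibited extension} as reusable tools and never touches the optimizer of $K$---while yours is more self-contained and exploits the first-order condition for $\y$ in a pleasant way.

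One small gap: your argument that $\bdelta\neq\zero$ appeals to a coordinate in $V(K)\setminus V(G)$, which may be empty (for instance if $J$ itself happened to be $p$-core). The fix is already in your hands: you observed that $\bm{v}$ has entries of both signs, whereas $s\y$ does not since $\y>\zero$, so $\tilde{\bm{v}}\neq s\y$ in any case. Also, the appeal to Theorem~\ref{thm:p-core colors} is unnecessary here: once $J$ is a sub-CRG of $K$, the principal submatrix of $M_K(p)$ on $V(G)$ equals $M_J(p)$ by the definition of sub-CRG, with no structural input about $K$ required.
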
~\\

In Section~\ref{sub-sec:eigenvalue prohibited}, we prove Lemma~\ref{lem:eigenvalue prohibited}. 
First, we need some essential terms. \\

\begin{defn}
    For any positive integer $t$, the \textbf{$t$-dalmatian CRG} is the CRG, denoted $D_t$, consisting of $t$ black vertices and all edges white.  The \textbf{$\infty$-dalmatian CRG} is the CRG, denoted $D_\infty$, which is a single white vertex. 
    The set of CRGs denoted by $\dd_p$ is as follows:
    \begin{itemize}
        \item   If $p\in [0,1/2)$, then $\dd_p$ is the set of all CRGs whose components are dalmatian CRGs.
        \item   If $p\in (1/2,1]$, then $\dd_p$ is the set of all CRGs whose components are complements of dalmatian CRGs.
        \item   If $p=1/2$, then $\dd_{1/2}$ is the set of all CRGs whose components are single vertices.
    \end{itemize}
\end{defn}~\\

See Figure~\ref{fig:dalmatian} for dalmatian CRGs of small order.

\begin{figure}[ht]
    \centering
    \scalebox{.6}[.6]{
        \begin{tikzpicture}
	\begin{scope}
		\node [circle, draw=black, minimum size = 1cm] at (-2,2) {};
	\end{scope}
	
	\begin{scope}[xshift=2cm]
		\node [circle, fill=black, minimum size = 1cm] at (-2,2) {};
	\end{scope}
	
	\begin{scope}[xshift=4cm]
		\node [circle, fill=black, minimum size = 1cm] (v1) at (-2,2) {};
		\node [circle, fill=black, minimum size = 1cm] (v2) at (0,2) {};
		\scoped[on background layer]{
    		\draw [double distance=4mm,thin]  (v1.center) -- (v2.center);
    	}
	\end{scope}	
	
	\begin{scope}[xshift=8cm, yshift=1cm]
		\node [circle, fill=black, minimum size = 1cm] (v1) at (-2,2) {};
		\node [circle, fill=black, minimum size = 1cm] (v2) at (0,2) {};
		\node [circle, fill=black, minimum size = 1cm] (v3) at (-1,0) {};	
		\scoped[on background layer]{
    		\draw [double distance=4mm,thin]  (v1.center) -- (v2.center);
    		\draw [double distance=4mm,thin]  (v1.center) -- (v3.center);
    		\draw [double distance=4mm,thin]  (v2.center) -- (v3.center);
    	}
	\end{scope}		 
	
	\begin{scope}[yshift=1cm, xshift=12cm]
		\node [circle, fill=black, minimum size = 1cm] (v1) at (-2,2) {};
		\node [circle, fill=black, minimum size = 1cm] (v2) at (0,2) {};
		\node [circle, fill=black, minimum size = 1cm] (v3) at (-2,0) {};	
		\node [circle, fill=black, minimum size = 1cm] (v4) at (0,0) {};
		\scoped[on background layer]{
    		\draw [double distance=4mm,thin]  (v1.center) -- (v3.center);
    		\draw [double distance=4mm,thin]  (v2.center) -- (v4.center);
    		
    		\draw [double distance=4mm,thin]  (v1.center) -- (v2.center);
    		\draw [double distance=4mm,thin]  (v2.center) -- (v3.center);
    		\draw [double distance=4mm,thin]  (v3.center) -- (v4.center);
    		\draw [double distance=4mm,thin]  (v4.center) -- (v1.center);
    	}
	\end{scope}	
\end{tikzpicture}
    }
    \caption{The dalmatian CRGs $D_\infty=\overline{D_1},\, D_1=\overline{D_{\infty}},D_2, D_3,$ and $D_4$.}
    \label{fig:dalmatian}
\end{figure}
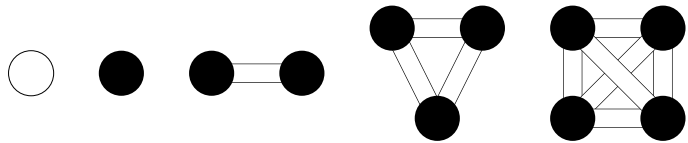~\\

\begin{rem}\label{rem:dalmatian p-core}
    For all $p\in [0,1]$ and each $K\in \dd_p$, $K$ is a $p$-core CRG. 
    Moreover, if $p\in [0,1/2]$, then $g_{D_\infty}(p)=p$ and for each positive integer $t$,  
    \begin{align*}
        g_{D_t}(p) = \min_{\x\in\simplex_{D_t}}\langle \x, M_{D_t}(p)\x\rangle 
        = \dfrac1{t^2}\langle \one, M_{D_t}(p)\one\rangle 
        = p + \dfrac{1-2p}{t} .
    \end{align*}
    If $p\in [1/2,1]$, then $g_{\overline{D_\infty}}(p)=1-p$ and for each positive integer $t$,  
    $g_{\overline{D_t}}(p) = 1-p + \frac{2p-1}{t}$.
\end{rem}~\\

In Lemma~\ref{lem:dalmatian p-core}, we show that for all $p\in [\phirange]$, the {\em only} $p$-core CRGs are those that belong to $\dd_p$. \\

\begin{lem}\label{lem:dalmatian p-core}
	For $p\in [0,1]$, the CRG associated to $P_3$ is $p$-prohibited if and only if $p\in [\phirange]$.  
    In particular for all $p\in [\phirange]$, a CRG $K$ is $p$-core if and only if $K\in\dd_p$.
\end{lem}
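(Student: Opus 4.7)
By Proposition~\ref{prop:Jbar prohibited} the statement is symmetric under $p\mapsto 1-p$, so I restrict to $p\in[\varphi^{-2},1/2]$. The case $p=1/2$ follows from Remark~\ref{rem:1/2 core}, and the subrange $p\in[\sqrt{2}-1,1/2)$ follows directly from Lemma~\ref{lem:eigenvalue prohibited} applied to $P_3$, whose minimum adjacency eigenvalue is $-\sqrt{2}$. For the ``only if'' direction, showing the $P_3$ CRG is not $p$-prohibited for $p\in[0,\varphi^{-2})$, the witness is the CRG associated to $P_4$ (four black vertices, path edges white, remaining three gray), which contains the $P_3$ CRG on any three consecutive vertices; since $A(P_4)$ has minimum eigenvalue $-2\cos(\pi/5)=-\varphi$, $M_{P_4}(p)=(1-p)I+pA(P_4)$ has minimum eigenvalue $1-p\varphi^2>0$ iff $p<\varphi^{-2}$, and the symmetric optimal weight $(a,b,b,a)$ with $a=(1-p)/(4-6p)$, $b=(1-2p)/(4-6p)$ lies in the interior of $\simplex_{P_4}$ for $p<1/2$.

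\textbf{Hard direction, $p\in[\varphi^{-2},\sqrt{2}-1)$.} Suppose $K$ is $p$-core and contains the $P_3$ CRG on $v_1,v_2,v_3$. By Theorem~\ref{thm:p-core colors}, all white edges of $K$ lie between black vertices, and the KKT conditions $M_K(p)\x^*=g\one$ evaluated at $v_1,v_2,v_3$ read as
\begin{align*}
(1-p)x_i + px_2 + ps_i &= g\qquad(i\in\{1,3\}),\\
(1-p)x_2 + p(x_1+x_3) + ps_2 &= g,
\end{align*}
where $s_i\ge 0$ is the total weight of white neighbors of $v_i$ outside $\{v_1,v_2,v_3\}$. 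If $s_1=s_2=s_3=0$, solving yields $x_2=(1-3p)g/(1-2p-p^2)$, which on $[\varphi^{-2},\sqrt{2}-1)$ is negative (numerator $<0$ since $\varphi^{-2}>1/3$; denominator $>0$), contradicting $\x^*>\zero$. Hence some $v_i$ has an external white neighbor $v_4$; a case analysis on the edge colors between $v_4$ and the other two $P_3$-vertices produces one of four induced sub-CRGs on $\{v_1,v_2,v_3,v_4\}$: a $P_4$, a $C_4$, a paw, or a $K_4-e$. The $P_4$ and $C_4$ sub-CRGs have minimum eigenvalues $-\varphi$ and $-2$, so Lemma~\ref{lem:eigenvalue prohibited} prohibits them throughout $[\varphi^{-2},\varphi^{-1}]$, a contradiction in these cases.

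\textbf{Main obstacle.} The remaining ``triangle-closing'' cases (paw and $K_4-e$, with minimum eigenvalues $\approx-1.48$ and $\approx-1.56$, both exceeding $-\varphi$) are not directly prohibited by Lemma~\ref{lem:eigenvalue prohibited} on the full target interval. The delicate step is to iterate the KKT reasoning on further induced $P_3$'s inside these configurations (each paw or $K_4-e$ sub-CRG visibly contains another induced $P_3$ with a degree-$1$ vertex in the white subgraph): each iteration yields another external white neighbor, eventually forcing a $P_4$, $C_4$, or $K_{1,3}$ (minimum eigenvalue $-\sqrt{3}$) sub-CRG that is $p$-prohibited throughout $[\varphi^{-2},\varphi^{-1}]$. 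Finiteness of $V(K)$ guarantees termination.

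\textbf{Dalmatian classification.} Once the $P_3$ CRG is $p$-prohibited on $[\varphi^{-2},\varphi^{-1}]$, let $K$ be any $p$-core. Theorem~\ref{thm:p-core colors} rules out black edges and white edges at white vertices; hence the white subgraph on $\vbk$ has no induced $P_3$ and is therefore a disjoint union of cliques, giving $D_t$ components. White vertices contribute isolated $D_\infty$ components, so $K\in\dd_p$; the converse that every $K\in\dd_p$ is $p$-core is Remark~\ref{rem:dalmatian p-core}.
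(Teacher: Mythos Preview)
Your ``only if'' direction and the opening reductions are fine, but the ``Main obstacle'' paragraph contains a genuine gap. When you reapply the KKT argument to an induced $P_3$ inside a paw (say triangle $v_1v_2v_4$ with pendant $v_3$ at $v_2$, and you take the $P_3$ $v_3v_2v_4$), the conclusion is only that some \emph{endpoint} of that $P_3$ has a white neighbor outside $\{v_3,v_2,v_4\}$. But $v_4$ already has such a neighbor, namely $v_1$, which is in your four-vertex set. So the iteration need not produce a new vertex, and ``finiteness of $V(K)$ guarantees termination'' does not yield a contradiction. The same obstruction occurs in $K_4-e$. As written, the argument for $p\in[\varphi^{-2},\sqrt{2}-1)$ is incomplete.

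The paper avoids this case analysis altogether. It first applies Lemma~\ref{lem:eigenvalue prohibited} not to $P_3$ but to $P_4$ and $C_4$, whose minimum adjacency eigenvalues are $-\varphi$ and $-2$; both are therefore $p$-prohibited on exactly $[1-\varphi^{-1},\varphi^{-1}]$. Hence the underlying white graph of any $p$-core CRG in this range is $\{P_4,C_4\}$-free. A short graph-theoretic lemma shows that every connected $\{P_4,C_4\}$-free graph has a dominant vertex $u$. For a nontrivial component $K$ of a $p$-core CRG, the KKT row at $u$ gives $g_K(p)=p+(1-2p)\x(u)>p$, while any gray edge in $K$ forces $g_K(p)<(1-p)/2$; together these yield $p<1/3$, a contradiction on $[1-\varphi^{-1},\varphi^{-1}]\subset[1/3,2/3]$. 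So every component is a white clique, and no $p$-core CRG contains an induced $P_3$. The dominant-vertex inequality replaces your iteration with a single clean step and simultaneously handles the paw, $K_4-e$, and every other configuration.
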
~\\

\begin{rem}
    Since the minimum eigenvalue of the adjacency matrix of $P_3$ is $-\sqrt{2}$, Lemma~\ref{lem:eigenvalue prohibited} gives that $P_3$ is prohibited for $p$ in the interval $[\sqrt{2}-1,2-\sqrt{2}]\approx [0.414,0.586]$. Lemma~\ref{lem:dalmatian p-core}, however, gives that $P_3$ is prohibited on the larger interval $[\phirange]\approx [0.382,0.618]$.
\end{rem}~\\

In Section \ref{sub-sec:dalmatian p-core}, we prove Lemma~\ref{lem:dalmatian p-core}. \\

\subsection{Proof of Lemma~\ref{lem:eigenvalue prohibited}}
\label{sub-sec:eigenvalue prohibited}

The result from Theorem~\ref{thm:degree} below was originally shown by Sidorenko~\cite{SidorenkoOptimalMultigraph} using different language and has appeared in several other forms throughout the study of hereditary properties.  
See \cite{MartinSurvey} for a more detailed history.  
For convenience, we state it in the language of CRGs. \\

\begin{thm}\label{thm:degree}
	Let $K$ be a $p$-core CRG with optimal weight vector $\x\in\simplex_K$.  
	Then 
	\begin{align*}\label{eq:M_K(p) formula}
		M_K(p)\,\x = g\,\one.  
	\end{align*}
\end{thm}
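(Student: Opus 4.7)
The plan is to exploit the fact that the $p$-core hypothesis forces the optimal $\x$ to lie in the relative interior of $\simplex_K$, so the problem reduces to a smooth optimization on the affine hyperplane $\langle \x, \one\rangle = 1$ with no active nonnegativity constraints. I would use either a Lagrange multiplier or a direct pairwise-perturbation argument; both give the conclusion in one line.

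Concretely, the pairwise-perturbation version runs as follows. Fix any two vertices $u, v \in V(K)$. Since $\x(w) > 0$ for every $w \in V(K)$ by the definition of $p$-core, there is some $\varepsilon_0 > 0$ such that for all $\varepsilon \in (-\varepsilon_0, \varepsilon_0)$, the vector $\x_\varepsilon := \x + \varepsilon(\be_u - \be_v)$ is coordinate-wise nonnegative; it also satisfies $\langle \x_\varepsilon, \one\rangle = 1$, so $\x_\varepsilon \in \simplex_K$. Define $h(\varepsilon) := g_K(p, \x_\varepsilon)$. By optimality of $\x$, the smooth function $h$ attains its minimum at $\varepsilon = 0$, hence $h'(0) = 0$. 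Since $M_K(p)$ is symmetric, a direct expansion gives
\begin{align*}
    h'(0) = 2\,\langle \be_u - \be_v,\, M_K(p)\,\x\rangle = 2\bigl[(M_K(p)\x)_u - (M_K(p)\x)_v\bigr].
\end{align*}
Therefore $(M_K(p)\x)_u = (M_K(p)\x)_v$ for every pair $u, v \in V(K)$, which means $M_K(p)\,\x = c\,\one$ for some scalar $c$.

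To identify the constant, pair with $\x$:
\begin{align*}
    g = g_K(p) = \langle \x, M_K(p)\,\x\rangle = c\,\langle \x, \one\rangle = c,
\end{align*}
so $c = g$ and $M_K(p)\,\x = g\,\one$, as required.

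There is no real obstacle here beyond correctly using the $p$-core hypothesis: without it, the optimum could live on the boundary of $\simplex_K$ (some $\x(w) = 0$), the perturbation $\be_u - \be_v$ would leave the simplex for one sign of $\varepsilon$, and we would only obtain a KKT-style inequality rather than the clean equation $M_K(p)\,\x = g\,\one$. Everything else is routine differentiation and the symmetry of $M_K(p)$.
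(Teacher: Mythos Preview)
Your argument is correct. The pairwise-perturbation derivation is the standard proof of this fact: the $p$-core hypothesis places $\x$ in the relative interior of $\simplex_K$, so the first-order optimality condition for the quadratic form on the affine hyperplane $\langle\x,\one\rangle=1$ gives $M_K(p)\,\x=c\,\one$, and pairing with $\x$ identifies $c=g_K(p)$. All steps are valid as written.

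As for comparison with the paper: the paper does not actually supply its own proof of this statement. Theorem~\ref{thm:degree} is quoted as a known result originally due to Sidorenko (in different language) and the reader is referred to~\cite{SidorenkoOptimalMultigraph} and the survey~\cite{MartinSurvey}. So there is nothing in the paper to compare against beyond the citation; your short self-contained argument is exactly the kind of proof one finds in those references.
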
~\\

So, Theorem~\ref{thm:degree} establishes that the optimal weight vector produces a weighting that is balanced. 
\begin{lem}\label{lem:delta prohibited}
    Let $0\leq p\leq 1$ and $J$ be a CRG.  
    If there exists a nonzero vector $\bdelta\in\RR^{V(J)}$ so that $\langle\bdelta,\one\rangle=0$ and 
    \begin{equation}
        \langle\bdelta, M_J(p)\,\bdelta\rangle \leq 0 ,  
    \end{equation}
    then $J$ is $p$-prohibited.  
\end{lem}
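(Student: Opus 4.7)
The plan is to argue by contradiction. Suppose $J$ is \emph{not} $p$-prohibited, so there exists a $p$-core CRG $K$ with $J \subseteq K$. Let $\x \in \simplex_K$ be the (unique) optimal weight vector guaranteed by $K$ being $p$-core, so $\x(u) > 0$ for every $u \in V(K)$ and $g_K(p,\x) = g_K(p)$. The key fact available from Theorem~\ref{thm:degree} is that $M_K(p)\,\x = g_K(p)\,\one$, which will kill the linear term in the perturbation below.

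Extend $\bdelta$ to a vector $\bdelta' \in \RR^{V(K)}$ by declaring $\bdelta'(u) = 0$ for $u \in V(K)\setminus V(J)$. Because $\bdelta'$ is supported on $V(J)$ and $J$ is a sub-CRG of $K$, the relevant block of $M_K(p)$ coincides with $M_J(p)$, so
\[
    \langle \bdelta',\, M_K(p)\,\bdelta'\rangle = \langle \bdelta,\, M_J(p)\,\bdelta\rangle \leq 0,
\]
and moreover $\langle \bdelta',\one\rangle = \langle \bdelta,\one\rangle = 0$. Since $\x$ is strictly positive on all of $V(K)$, there is some $t_0 > 0$ such that $\x_t := \x + t\,\bdelta' \in \simplex_K$ for every $t \in [-t_0,t_0]$.

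Expanding the quadratic form and applying Theorem~\ref{thm:degree} yields
\[
    g_K(p,\x_t) = g_K(p,\x) + 2t\,\langle \bdelta', M_K(p)\x\rangle + t^2\,\langle \bdelta, M_J(p)\,\bdelta\rangle = g_K(p) + t^2\,\langle \bdelta, M_J(p)\,\bdelta\rangle,
\]
where the linear term vanishes because $M_K(p)\x = g_K(p)\one$ and $\langle \bdelta',\one\rangle = 0$. Now split into two cases. If $\langle \bdelta, M_J(p)\,\bdelta\rangle < 0$, then $g_K(p,\x_t) < g_K(p)$ for any nonzero $t$ of small absolute value, directly contradicting the optimality of $\x$. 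If instead $\langle \bdelta, M_J(p)\,\bdelta\rangle = 0$, then every $\x_t$ with $t \in [-t_0,t_0]$ lies in $\simplex_K$ and achieves $g_K(p)$, giving infinitely many optimal weight vectors; since each such $\x_t$ is strictly positive on $V(K)$, this violates the uniqueness of the optimal weight vector for a $p$-core CRG.

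The step that needs care is the identification $\langle \bdelta', M_K(p)\,\bdelta'\rangle = \langle \bdelta, M_J(p)\,\bdelta\rangle$: it is what forces $J$ to be a sub-CRG (same vertex colors and same edge colors on surviving pairs) rather than merely a substructure, and this is exactly what the hypothesis $J \subseteq K$ provides. Once this bookkeeping is in place, the argument reduces to the standard first-order/second-order optimality observation that an interior minimizer of a quadratic over the simplex cannot coexist with a direction $\bdelta'$ summing to zero along which the quadratic form is nonpositive.
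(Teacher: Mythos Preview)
Your proof is correct and follows essentially the same approach as the paper: assume $J$ sits inside a $p$-core CRG $K$, extend $\bdelta$ by zero, perturb the optimal weight vector, and use Theorem~\ref{thm:degree} to kill the linear term. The only cosmetic difference is in the endgame: the paper chooses $\varepsilon$ maximal so that some coordinate of $\x+\varepsilon\bdelta'$ hits zero, which forces a \emph{strict} inequality $g_K(p,\x')>g_K(p,\x)$ directly from the definition of $p$-core and handles both cases at once, whereas you split into the strict and equality cases and invoke uniqueness of the optimal weight vector for the latter. Both arguments are valid.
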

\begin{proof}
    We proceed by contradiction.  
    Suppose $J$ is not $p$-prohibited and that there exists $\bdelta$ as above.  
    Then there exists a $p$-core CRG $K$ containing $J$ and we may let $\x$ denote the optimal weight vector for $K$.  
    We extend $\bdelta$ to a vector $\bdelta'\in\RR^{V(K)}$ by letting $\bdelta'(u):=0$ if $u\in V(K)\setminus V(J)$ and $\bdelta'(u):=\bdelta(u)$ if $u\in V(J)$.  
    Note that $\langle\bdelta',\one\rangle=0$. \\
    
    Since $\x$ is the optimal weight vector for the $p$-core CRG $K$, $\x(u) > 0$ for all $u\in V(K)$ and it follows that there exists some $\varepsilon > 0$ so that $\x':=\x + \varepsilon\bdelta'$ lies in $\simplex_K$ and $\x'(u) = 0$ for some $u\in \vk$. 
    By the definition of $g_K(p)$, the fact that $\x$ is optimal, and Theorem \ref{thm:degree}, 
    \begin{align*}
        0 &< g_K(p,\x') - g_K(p,\x)\\
        &= \langle \x+\varepsilon\bdelta', M_K(p)(\x+\varepsilon\bdelta')\rangle 
        - \langle \x, M_K(p)\x\rangle\\
        &= 2\varepsilon\langle \bdelta, M_K(p)\x\rangle
        + \varepsilon^2\langle \bdelta', M_K(p)\bdelta'\rangle\\
        &= 2\varepsilon\langle \bdelta', g_K(p)\one\rangle
        + \varepsilon^2\langle \bdelta, M_J(p)\bdelta\rangle\\
        &= \varepsilon^2\langle \bdelta, M_J(p)\bdelta\rangle \leq 0, 
    \end{align*}
    a contradiction to the assumption that $J$ is not $p$-prohibited.  
\end{proof}~\\

We include one more well-known fact about $p$-core CRGs. \\

\begin{prop}[See~\cite{MartinSurvey}]\label{prop:archapelago}
    Let $K_1,\ldots,K_{\ell}$ be CRGs and let $K=K_1\oplus\cdots\oplus K_{\ell}$.  
    Then for all $p\in [0,1]$, 
    \begin{equation}
        g_{K}(p)^{-1} = \sum_{i=1}^{\ell} g_{K_i}(p)^{-1} .  
    \end{equation}
    In particular, $K$ is $p$-core if and only if each of $K_1,\ldots,K_{\ell}$ are $p$-core.  
\end{prop}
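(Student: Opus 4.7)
The plan is to exploit the block structure of $M_K(p)$ imposed by the disjoint-union construction. Since every edge between vertices of distinct parts $V(K_i)$ and $V(K_j)$ is gray, the corresponding off-block entries of $M_K(p)$ vanish and $M_K(p)$ is block diagonal with diagonal blocks $M_{K_1}(p),\ldots,M_{K_{\ell}}(p)$. Given any $\x\in\simplex_K$, I would decompose it as $(\x_1,\ldots,\x_{\ell})$ with $\x_i$ supported on $V(K_i)$, and set $s_i:=\langle \x_i,\one\rangle\geq 0$, so that $s_1+\cdots+s_{\ell}=1$. The quadratic form then splits as
\begin{equation*}
\langle\x,M_K(p)\,\x\rangle = \sum_{i=1}^{\ell} \langle \x_i, M_{K_i}(p)\,\x_i\rangle.
\end{equation*}
For each $i$ with $s_i>0$ the vector $\y_i:=s_i^{-1}\x_i$ lies in $\simplex_{K_i}$, hence $\langle \x_i,M_{K_i}(p)\,\x_i\rangle = s_i^2\,g_{K_i}(p,\y_i)\geq s_i^2\,g_{K_i}(p)$; for $s_i=0$ the nonnegativity $\x_i\geq\zero$ forces $\x_i=\zero$, so this term also matches the bound $s_i^2\,g_{K_i}(p)=0$. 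The problem thus reduces to minimizing $\sum_i s_i^2\,g_{K_i}(p)$ over the standard simplex in $\RR^{\ell}$.

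For the scalar minimization I would apply Cauchy--Schwarz with $a_i := s_i\sqrt{g_{K_i}(p)}$ and $b_i := 1/\sqrt{g_{K_i}(p)}$, so that
\begin{equation*}
1 = \left(\sum_{i=1}^{\ell} a_i b_i\right)^2 \leq \left(\sum_{i=1}^{\ell} s_i^2\,g_{K_i}(p)\right)\left(\sum_{i=1}^{\ell} g_{K_i}(p)^{-1}\right),
\end{equation*}
which yields the lower bound $g_K(p) \geq \bigl(\sum_i g_{K_i}(p)^{-1}\bigr)^{-1}$. Equality holds iff $a_i/b_i = s_i\,g_{K_i}(p)$ is independent of $i$, which together with $\sum_i s_i = 1$ pins down the unique optimizer $s_i^{*} := g_{K_i}(p)^{-1}\big/\sum_{j} g_{K_j}(p)^{-1}$. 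Taking each $\y_i$ to be an optimal weight vector for $K_i$ and defining $\x$ by $\x_i := s_i^{*}\y_i$ exhibits a weight vector meeting the bound, so the Cauchy--Schwarz inequality is tight and the stated identity $g_K(p)^{-1} = \sum_i g_{K_i}(p)^{-1}$ follows.

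Finally, the $p$-core assertion follows from inspecting the equality cases above: since the optimal $s_i^{*}$ are strictly positive for every $i$, every optimal $\x\in\simplex_K$ has the form $\x_i = s_i^{*}\y_i$ with $\y_i$ an optimal weight vector for $K_i$. Therefore $\x$ is strictly positive on $V(K)$ iff each $\y_i$ is strictly positive on $V(K_i)$; equivalently, $K$ is $p$-core iff every $K_i$ is $p$-core. The only technical wrinkle I anticipate is the degenerate case in which some $g_{K_j}(p)=0$: there the identity must be read with the convention $1/0=\infty$, and a brief separate argument --- concentrating all mass on such a component yields $g_K(p)=0$ --- disposes of it. I expect this boundary bookkeeping to be the only non-routine step.
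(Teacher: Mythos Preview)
Your proof is correct. The paper does not actually prove this proposition---it is stated with a citation to the survey~\cite{MartinSurvey} and no argument is given---so there is nothing to compare against. Your block-diagonal decomposition followed by Cauchy--Schwarz on the scalar problem is the standard route, and your equality analysis correctly identifies all optimal weight vectors for $K$ as having the form $\x_i=s_i^{*}\y_i$ with each $\y_i$ optimal for $K_i$, which is exactly what is needed for the $p$-core equivalence. One minor remark: for $p\in(0,1)$ the diagonal entries of $M_{K_i}(p)$ lie in $\{p,1-p\}\subset(0,1)$, so $g_{K_i}(p)>0$ automatically and the degenerate case you flag can only occur at the endpoints $p\in\{0,1\}$; your handling of it is fine, but you could also simply note this and avoid the convention altogether on the open interval.
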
~\\

\begin{lem}\label{lem:prohibited extension}
    Let $p\in [0,1]$. 
    A CRG $J$ is $p$-prohibited if and only if for all $p$-core CRGs $K$ and all positive integers $k$, the CRG $(k\cdot J)\oplus K$ is $p$-prohibited.  
\end{lem}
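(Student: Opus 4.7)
The plan is to prove both implications directly from the definitions of $p$-prohibition and sub-CRG, together with Proposition~\ref{prop:archapelago} on disjoint unions. The whole argument is structural and requires no calculation.

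For the forward direction, I would assume $J$ is $p$-prohibited and fix an arbitrary $p$-core CRG $K$ and positive integer $k$. The key observation is that $J$ is itself a sub-CRG of $(k\cdot J)\oplus K$, obtained by deleting every vertex outside a single copy of $J$. Suppose toward contradiction that $(k\cdot J)\oplus K$ is not $p$-prohibited, so it is a sub-CRG of some $p$-core CRG $K'$. Since being a sub-CRG is transitive (it amounts to iterated vertex deletion), $J\subseteq K'$, contradicting the $p$-prohibition of $J$. Hence $(k\cdot J)\oplus K$ is $p$-prohibited for every such $K$ and $k$.

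For the backward direction, I would argue the contrapositive. Suppose $J$ is not $p$-prohibited; pick a $p$-core CRG $K_0$ with $J\subseteq K_0$. Setting $k=1$ and $K=K_0$, we have $(k\cdot J)\oplus K=J\oplus K_0$. By Proposition~\ref{prop:archapelago}, $K_0\oplus K_0$ is $p$-core, since each summand is. Now $J\oplus K_0$ is a sub-CRG of $K_0\oplus K_0$: restrict the first copy of $K_0$ to its sub-CRG $J$ and keep the second copy intact. The cross-edges between $V(J)$ and $V(K_0)$ are gray in both $J\oplus K_0$ and in the corresponding restriction of $K_0\oplus K_0$, so this really is a sub-CRG. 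Therefore $(k\cdot J)\oplus K$ is not $p$-prohibited for this choice of $k$ and $K$, which violates the hypothesis of the backward direction.

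There is no meaningful obstacle: the lemma simply packages two facts, namely that $p$-prohibition is preserved under passing to supergraphs that contain the CRG as a sub-CRG, and that the class of $p$-core CRGs is closed under disjoint union. The only point worth double-checking is that the gray coloring of cross-edges in $J\oplus K_0$ matches the cross-edges inherited from $K_0\oplus K_0$, which is immediate from the definition of disjoint union of CRGs.
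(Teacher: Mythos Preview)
Your proposal is correct and follows essentially the same approach as the paper. The only cosmetic difference is that in the backward direction the paper embeds $J\oplus K$ into $L\oplus K$ for an arbitrary $p$-core $K$ (where $L\supseteq J$ is $p$-core), whereas you specialize to $K=K_0=L$ and embed $J\oplus K_0$ into $K_0\oplus K_0$; either choice suffices to produce the required counterexample, and both rely on Proposition~\ref{prop:archapelago} in the same way.
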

\begin{proof}
    To prove the forward implication, if $J$ is $p$-prohibited, then no $p$-core CRG can contain $k\cdot J$ because it would contain $J$.
    To prove the reverse implication, if $J$ is not $p$-prohibited then there exists a $p$-core CRG, $L$, containing $J$.  
    Then $J\oplus K$ is contained in $L\oplus K$, which is $p$-core by Proposition~\ref{prop:archapelago}, as desired.  
\end{proof}~\\

With the primary tools of Lemmas~\ref{lem:delta prohibited} and~\ref{lem:prohibited extension} established, we now proceed to prove Lemma~\ref{lem:eigenvalue prohibited} itself. \\

Recall that $J$ is the CRG associated to a nonempty graph $G$.
If $p=1/2$, then by Theorem \ref{thm:p-core colors}, $J$ is $1/2$-prohibited if and only if $J$ has an edge that is not gray.  
Thus, any nonempty $G$ gives that $J$ is $1/2$-prohibited, settling the case where $p=1/2$. \\  
    
Now suppose $p\in (0,1/2)$.  
Write $A$ for the adjacency matrix of $G$ and suppose $A\x = \lambda \x$ for some unit vector $\x$ where $\lambda$ is the minimum eigenvalue of $A$.  
Then $J$ is the the CRG on $V(G)$ with all vertices black, and where edge $uv$ is white if $uv\in E(G)$, and $uv$ is gray if $uv\in E(G^c)$.  
So $M_J(p) = (1-p)I + pA$. \\ 
    
For convenience, we write $V_1$ and $V_2$ for the vertex sets in $2\cdot J$ corresponding to each copy of $J$.  
Let $\bdelta\in\RR^{V(2\cdot J)}$ be defined by 
\begin{align*}
    \bdelta(u)  :=  \left\{     \begin{array}{rl}
                                    \x(u),   &   u\in V_1; \\
                                    -\x(u),  &   u\in V_2 .
                                \end{array}\right.
\end{align*}~\\
    
By definition, $\langle \one, \bdelta\rangle = 0$.  
Moreover note that 
\begin{align*}
    \langle\bdelta, M_{2\cdot J}(p)\bdelta\rangle 
    &=  \langle \x, M_J(p)\x \rangle + \langle -\x, M_J(p)(-\x) \rangle \\
    &=  2 \langle \x, M_J(p)\x \rangle  \\
    &=  2 \langle \x, ((1-p)I+pA)\x \rangle \\
    &=  2(1-p) \langle \x, I\x \rangle + 2p \langle \x, A\x \rangle \\
    &=  2(1-p) \langle \x, \x \rangle + 2p \lambda \langle \x, \x \rangle \\
    &=  2\cdot(1-(1-\lambda)p) \cdot \langle \x, \x \rangle.  
\end{align*}~\\
    
If $p\geq 1/(1-\lambda)$, then $\langle\bdelta, M_{2\cdot J}(p)\bdelta\rangle\leq 0$. By Lemma~\ref{lem:delta prohibited}, the CRG $2\cdot J$ is $p$-prohibited and by Lemma~\ref{lem:prohibited extension}, $J$ itself is $p$-prohibited. This settles the case where $p\in (0,1/2)$.\\
    
Finally, for the case of $p\in (1/2,1)$, Proposition~\ref{prop:Jbar prohibited} gives that $J$ is $p$-prohibited if and only if $J$ is $(1-p)$-prohibited. This concludes the proof of Lemma~\ref{lem:eigenvalue prohibited}.\qed \\

\subsection{Proof of Lemma~\ref{lem:dalmatian p-core}}
\label{sub-sec:dalmatian p-core}

Lemma~\ref{lem:dominant vertex} below is a result in pure graph theory that is reminiscent of the theorem that categorizes $\{P_4,C_4,C_4^c\}$-free graphs as threshold graphs. 
A \textit{dominant vertex} in a graph is one for which every other vertex is its neighbor. \\

\begin{lem}\label{lem:dominant vertex}
    If $G$ is a connected $\{P_4,C_4\}$-free graph, then $G$ has a dominant vertex.  
\end{lem}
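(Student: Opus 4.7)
The plan is to prove the contrapositive-style statement by choosing $v$ to be a vertex of maximum degree in $G$ and showing that $v$ must be dominant. Suppose for contradiction that $v$ has a non-neighbor. Because $G$ is connected, some non-neighbor $u$ of $v$ is at finite distance from $v$; let $P$ be a shortest $v$-$u$ path. If $P$ had length $\geq 3$, then the first four vertices of $P$ would form an induced $P_4$ (shortest paths are induced), contradicting the $P_4$-free hypothesis. Hence $P$ has length exactly $2$, and there is a common neighbor $w$ of $v$ and $u$.

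Next, I would study an arbitrary $x \in N(v) \setminus \{w\}$ together with $v,w,u$. Since $xv, vw, wu \in E(G)$ and $vu \notin E(G)$, the induced subgraph on $\{x,v,w,u\}$ is determined by which of $xw$ and $xu$ are edges. If $xw \notin E(G)$ and $xu \notin E(G)$, then $x\text{-}v\text{-}w\text{-}u$ is an induced $P_4$, contradicting the hypothesis. If $xw \notin E(G)$ and $xu \in E(G)$, then $x\text{-}v\text{-}w\text{-}u\text{-}x$ is an induced $C_4$, again a contradiction. Therefore in either case $xw \in E(G)$, so every $x \in N(v) \setminus \{w\}$ lies in $N(w)$; that is,
\[
N(v) \setminus \{w\} \subseteq N(w).
\]

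Finally, I would compare degrees. The set $N(w)$ contains the disjoint union $\bigl(N(v) \setminus \{w\}\bigr) \cup \{u,v\}$, since $u \in N(w) \setminus N(v)$ and $v \in N(w)$ while $v \notin N(v)$. Counting gives
\[
|N(w)| \;\geq\; |N(v)| - 1 + 2 \;=\; |N(v)| + 1,
\]
contradicting the choice of $v$ as a vertex of maximum degree. Hence $v$ has no non-neighbor and is therefore dominant.

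I do not expect a genuine obstacle: the argument is a short case analysis, and the only subtle point is verifying that the four-vertex subgraph on $\{x,v,w,u\}$ is genuinely \emph{induced} (using that $vu$ and $xw$ are non-edges in the relevant cases) so that the forbidden patterns $P_4$ and $C_4$ really appear as induced subgraphs.
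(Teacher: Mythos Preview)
Your proof is correct and follows essentially the same approach as the paper's: pick a maximum-degree vertex, find a non-neighbor at distance $2$ via an intermediate vertex, and show by a $\{P_4,C_4\}$-case analysis that the intermediate vertex is adjacent to all neighbors of the maximum-degree vertex plus the non-neighbor, yielding a degree contradiction. The only cosmetic differences are the variable names and that you spell out the two subcases ($xw\notin E$, $xu\notin E$ versus $xw\notin E$, $xu\in E$) explicitly, whereas the paper handles them in one sentence.
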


\begin{proof}
    Let $u$ be a vertex of $G$ which attains the maximum degree $\Delta=\Delta(G)$ and let $A:=N_G(u)$ and $B:=V(G)\setminus(A\cup\{u\})$.  
    If $B = \emptyset$, then $u$ is the desired vertex, so we assume otherwise.  
    Let $w\in B$.  
    Since $G$ avoids induced $P_4$-s, connectivity implies $G$ has diameter at most $2$. \\
    
    In particular, $\dist_G(u,w) = 2$, so there exists some vertex $v\in A$ so that $uvw$ is an induced path on $3$ vertices. 
    If $v'$ is any vertex in $A\setminus\{v\}$, then $v'uvw$ is a path on $4$ vertices.  
    Since $uw\notin E(G)$ and $G$ avoids both induced $P_4$-s and induced $C_4$-s, it follows that $vv'\in E(G)$.  
    So $v$ is adjacent to $\{u,w\}\cup (A\setminus\{v\})$ and has degree at least $\Delta+1$, a contradiction.  
\end{proof}~\\

Lemma~\ref{lem:dominant vertex} yields a very strong structural theorem on CRGs, Lemma~\ref{lem:dominant to dalmatian}. Recall the definition of the underlying graph of CRG, $K$, in Definition~\ref{defn:underlying}: the graph whose vertices are the vertices of $K$ and whose edges are the non-gray edges of $K$. \\

\begin{lem}\label{lem:dominant to dalmatian}
    Let $p\in [1/3,2/3]$. 
    If a $p$-core CRG has an underlying graph which is $\{P_4,C_4\}$-free, then every component of the underlying graph is a clique. 
    That is, every component of such a CRG must be a member of $\dd_p$.
\end{lem}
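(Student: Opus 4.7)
The plan is to prove the contrapositive: assume $K$ is a $p$-core CRG whose underlying graph is $\{P_4,C_4\}$-free but some component of the underlying graph is not a clique, and derive a contradiction by combining an upper bound on $g_K(p)$ coming from a non-edge in that component with the $p$-core equation at the dominant vertex.

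First I would carry out two reductions. Since $K$ and $\overline{K}$ have the same underlying graph, and $M_K(p)=M_{\overline{K}}(1-p)$ forces $K$ to be $p$-core if and only if $\overline{K}$ is $(1-p)$-core, the case $p\in (1/2,2/3]$ follows from $p\in [1/3,1/2)$ by complementation. Proposition~\ref{prop:archapelago} lets me further assume the underlying graph of $K$ is connected, since the $\{P_4,C_4\}$-free property is preserved under induced subgraphs. The case $|V(K)|=1$ is immediate, and $p=1/2$ is disposed of by Remark~\ref{rem:1/2 core}, which forces every edge to be gray and so precludes a connected underlying graph with at least two vertices. What remains is the substantive case: $p\in [1/3,1/2)$, the underlying graph of $K$ is connected, and $|V(K)|\geq 2$. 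By Theorem~\ref{thm:p-core colors}(a) every vertex incident to a white edge is black, so connectedness of the underlying graph forces all of $V(K)$ to be black.

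Next I would apply Lemma~\ref{lem:dominant vertex} to the underlying graph to obtain a dominant vertex $v$, and suppose for contradiction that $K$ is not a clique. Then there exist $u_1,u_2\in V(K)\setminus\{v\}$ with $u_1u_2\in\egk$. Define $\y\in\simplex_K$ by $\y(u_1)=\y(u_2)=1/2$ and $\y\equiv 0$ elsewhere. Since $u_1$ and $u_2$ are black and $u_1u_2$ is gray,
\[
\langle \y,\, M_K(p)\y\rangle \;=\; 2\cdot(1-p)(1/2)^2 \;=\; (1-p)/2,
\]
so $g_K(p)\leq (1-p)/2$. On the other hand, because $v$ is dominant every edge from $v$ is white, and Theorem~\ref{thm:degree} applied to the (unique positive) optimal weight vector $\x$ at the $v$-coordinate yields
\[
(1-p)\x(v) + p\bigl(1-\x(v)\bigr) \;=\; g_K(p),
\]
i.e.\ $\x(v) = (g_K(p)-p)/(1-2p)$. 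For $p\in [1/3,1/2)$ the inequality $(1-p)/2\leq p$ holds, so $g_K(p)\leq p$ and hence $\x(v)\leq 0$, contradicting the strict positivity of $\x$ required by $p$-coreness.

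The main obstacle is choosing the test vector $\y$: placing mass $1/2$ on each endpoint of a gray edge between two black vertices immediately produces the bound $g_K(p)\leq (1-p)/2$, and the threshold $p=1/3$ in the statement is exactly the value at which $(1-p)/2=p$, so the argument breaks for $p<1/3$. Once $\y$ is in hand the contradiction is a one-line comparison. The remaining care is only in the reductions, which are standard: complementation preserves both $p$-coreness and the underlying graph, while the decomposition into components preserves both $p$-coreness (Proposition~\ref{prop:archapelago}) and $\{P_4,C_4\}$-freeness.
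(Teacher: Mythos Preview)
Your proof is correct and follows essentially the same route as the paper: reduce by complementation and by passing to a component, use Lemma~\ref{lem:dominant vertex} to find a dominant vertex, bound $g_K(p)$ from above by $(1-p)/2$ via the gray edge between two black vertices, and combine with the $p$-core equation at the dominant vertex. The only cosmetic difference is that the paper reads off $g_K(p)>p$ from the dominant-vertex equation and then derives $p<(1-p)/2$, i.e.\ $p<1/3$, whereas you solve for $\x(v)=(g_K(p)-p)/(1-2p)$ and show it is nonpositive; these are the same inequality rearranged.
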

\begin{proof}
    If $p=1/2$, then as we saw in Remark~\ref{rem:1/2 core}, a $p$-core CRG has only gray edges and so the underlying graph is empty. \\
    
    Let $p\in [1/3,1/2)$. 
    Every trivial component of the underlying graph is simply a vertex in the CRG.
    Let $K$ be a nontrivial component of the CRG. 
    By Theorem~\ref{thm:p-core colors}, the vertices of $K$ must be black and by Lemma~\ref{lem:dominant vertex}, the underlying graph of $K$ has a dominant vertex $u$. \\

    Let $\x\in\simplex_K$ be the optimal weight vector for $K$ and define $g := g_K(p)$.  
    By Theorem \ref{thm:degree}, $M_K(p)\x = g\one$ and by inspecting the entry indexed by $u$, 
    \begin{align*}
        g   =   (1-p)\x(u) + p\cdot\sum_{u\neq v\in \vk}\x(v)
            =   p + (1-2p)\x(u) > p.  
    \end{align*}~\\
    
    For a contradiction, we now suppose $K$ is not a dalmatian CRG. Hence, there exists some gray edge $vw$, and the sub-CRG $K'$ on $v$ and $w$ is the disjoint union of two black vertices.  
    Since $K$ is $p$-core, 
    \begin{align*}
        g 
        < g_{K'}(p) 
        = \min_{\y\in \simplex_{K'}}(1-p)(\y(v)^2+\y(w)^2) 
        = \dfrac{1-p}{2}.  
    \end{align*}
    Altogether, $p<g<(1-p)/2$ which implies that $p < 1/3$, a contradiction. \\
    
    The case where $p\in (1/2,2/3]$ follows by symmetry.
\end{proof}~\\

We now prove Lemma~\ref{lem:dalmatian p-core} itself with the primary tool being Lemma~\ref{lem:dominant to dalmatian}. 
As mentioned in Remark~\ref{rem:prohibited vs core}, we leave it as an exercise to verify that the CRG associated with $P_4$ is $p$-core for $p\in (0,1-\varphi^{-1})\cup (\varphi^{-1},1)$. 
Hence, $P_3$ is not $p$-prohibited in this range, proving the forward implication. \\

For the reverse implication, let $p\in [\phirange]$. 
Since the minimum eigenvalues of the adjacency matrices of $C_4$ and $P_4$ are $-2$ and $-\varphi^{-1}$ respectively, Lemma~\ref{lem:eigenvalue prohibited} implies that the CRGs associated with $C_4$ and $P_4$ are $p$-prohibited for $p\in [\phirange]$. \\

Suppose $K$ is a $p$-core CRG.  
Since $p\in [1/3,2/3]$, it follows from Lemma~\ref{lem:dominant to dalmatian} that the components of the underlying graph of $K$ are cliques.  
No such graph contains an induced $P_3$ and so $P_3$ is $p$-prohibited for all $p\in [\phirange]$, as desired.  \\

For the second statement of the theorem, since $P_3$ is $p$-prohibited, the only underlying graphs of a $p$-core CRG can be disjoint cliques, which is exactly the condition of being in $\dd_p$. As observed in Remark~\ref{rem:dalmatian p-core}, all CRGs in $\dd_p$ are $p$-core. This concludes the proof of Lemma~\ref{lem:dalmatian p-core}.\qed \\

\section{Proof of the main result}
\label{sec:main result}
    To proceed with the proof of Theorem~\ref{thm:ed of random graph}, we need some preparation.  
    In Section~\ref{sub-sec:trimming p-core CRGs}, Lemma~\ref{lem:bounded components} shows that for all $p\in (1/3,2/3)$ and for any CRG $K$, there exists a sub-CRG $K'$ of $K$ so that $g_{K'}(p)$ is close to $g_K(p)$ and $K'$ has components whose order is bounded by a function of $p$ and a tolerance term $\varepsilon$.  \\

    For the remaining discussion, let $p_0\in (0,1)$ and define 
    \begin{align}
        p^*     :=  \dfrac{\log(1-p_0)}{\log(p_0(1-p_0))} . \label{eq:p star}
    \end{align}
    In Section \ref{sub-sec:forbidding a random graph}, we investigate when a random graph $F\sim\GG(n_0,p_0)$ embeds into a CRG (i.e., when $F\mapsto K$).  
    There we show that \aas, if a CRG, $K$, has bounded components in the above sense and the random graph does not map into $K$, then $g_K(p^*)$ has to be at least the desired value to within a small tolerance.
    Applying Lemma~\ref{lem:bounded components}, this is true even if the components of $K$ are not bounded.  \\
    
    Finally in Section \ref{sub-sec:ed of random graph}, we put together these ideas to prove our main result.  

\subsection{Trimming $p$-core CRGs}
\label{sub-sec:trimming p-core CRGs}

The main result in this subsection is Lemma~\ref{lem:bounded components}, which establishes that, for $p\in(1/3,2/3)$, a CRG has a sub-CRG with bounded component sizes and a negligible change in the value of the $g$-function.   
\begin{lem}\label{lem:bounded components}
    Fix $p\in (1/3,2/3)$ and $\varepsilon\in(0,1)$.  
    There exists a positive integer $B = B(p,\varepsilon)$ such that the following holds: 
    For all CRGs $K$, there exists a $p$-core sub-CRG $K'$ whose components have order at most $B$, and $g_{K'}(p) \leq (1+\varepsilon)g_K(p)$.  
\end{lem}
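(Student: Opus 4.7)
The plan is to reduce the problem to trimming each connected $p$-core component of $K$ separately. We may assume $K$ itself is $p$-core: restricting any optimal weight vector of $K$ to its support yields a sub-CRG with the same $g$-value, and iterating this reduction (or passing to the union of supports of all optimal weight vectors) produces a $p$-core sub-CRG. By Proposition~\ref{prop:archapelago}, we then decompose $K = L_1 \oplus \cdots \oplus L_m$ with each $L_i$ a connected $p$-core CRG and $g_K(p)^{-1} = \sum_i g_{L_i}(p)^{-1}$. It suffices to produce, for each $L_i$, a $p$-core sub-CRG $L_i' \subseteq L_i$ with components of order at most $B$ and $g_{L_i'}(p) \leq (1+\varepsilon) g_{L_i}(p)$; setting $K' := L_1' \oplus \cdots \oplus L_m'$ and re-applying Proposition~\ref{prop:archapelago} yields the required bound, and the disjoint union of $p$-core CRGs is itself $p$-core.

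For a connected $p$-core $L$ of order exceeding $B$, I plan an iterative vertex-removal argument. Let $L^{(i)}$ be the current $p$-core sub-CRG, starting from $L^{(0)} = L$, with optimal weight vector $\x^{(i)}$ and $g^{(i)} := g_{L^{(i)}}(p)$; by Theorem~\ref{thm:degree}, $M_{L^{(i)}}(p)\,\x^{(i)} = g^{(i)} \one$. At each step I select a minimum-weight vertex $u$ in any component of $L^{(i)}$ still of order larger than $B$, so $\x^{(i)}(u) \leq 1/|V(L^{(i)})|$, and use $\x^{(i)}$ restricted to $V(L^{(i)}) \setminus \{u\}$, renormalized, as a test weight vector for $L^{(i)} - \{u\}$. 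A direct computation using $M_{L^{(i)}}(p)\,\x^{(i)} = g^{(i)} \one$ yields
\[
g_{L^{(i)} - \{u\}}(p) \leq g^{(i)} + \frac{\x^{(i)}(u)^2 (m_{uu} - g^{(i)})}{(1 - \x^{(i)}(u))^2}.
\]
Passing to a $p$-core sub-CRG of $L^{(i)} - \{u\}$ only preserves or decreases $g$, so the bound applies to $g^{(i+1)}$. Telescoping the per-step increases (bounded by $O(1/|V(L^{(i)})|^2)$) yields a final additive estimate $g_{L'}(p) \leq g_L(p) + O(1/B)$ once all components have order at most $B$.

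The main obstacle will be converting this additive bound into the multiplicative bound $g_{L'}(p) \leq (1+\varepsilon) g_L(p)$, which requires a uniform positive lower bound $g_L(p) \geq \gamma(p) > 0$ over all connected $p$-core CRGs for fixed $p \in (1/3, 2/3)$. I would establish $\gamma(p) > 0$ by combining Theorem~\ref{thm:p-core colors} (non-gray edges of a connected $p$-core have monochromatic endpoints, so the underlying graph is a single-color graph) with Lemma~\ref{lem:eigenvalue prohibited} (its minimum adjacency eigenvalue must strictly exceed $1 - 1/\min(p,1-p) > -2$). Classical results in spectral graph theory for graphs with minimum eigenvalue bounded away from $-2$ then imply that a connected $p$-core CRG is either a clique-type component (a dalmatian $D_t$ or its complement, for which Remark~\ref{rem:dalmatian p-core} gives $g_{D_t}(p) \geq \min(p,1-p)$ directly) or belongs to a finite family of bounded order depending on $p$. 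This produces a positive $\gamma(p)$, and choosing $B(p,\varepsilon) = O(1/(\varepsilon \gamma(p)))$ then finishes the proof.
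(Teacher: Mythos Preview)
Your overall strategy has a genuine gap at the final step. The ``classical spectral graph theory'' result you invoke is false as stated: connected graphs with minimum eigenvalue bounded away from $-2$ are \emph{not} just cliques plus finitely many exceptions. For instance, take the graph $K_n$ with a single pendant vertex attached. A short computation shows its minimum adjacency eigenvalue tends to $-\varphi\approx -1.618$ as $n\to\infty$, so for any threshold $\lambda\in(-2,-\varphi)$ you obtain an infinite family of non-clique connected graphs with $\lambda_{\min}\geq\lambda$. Thus your argument does not establish a uniform lower bound $\gamma(p)>0$ on $g_L(p)$ over all connected $p$-core CRGs, and without that bound the conversion from the additive estimate $g_{L'}(p)\leq g_L(p)+O(1/B)$ to the multiplicative one $g_{L'}(p)\leq(1+\varepsilon)g_L(p)$ collapses. (There is also a small slip earlier: choosing $u$ to be of minimum weight \emph{within a single large component} does not give $\x^{(i)}(u)\leq 1/|V(L^{(i)})|$; you only get $\x^{(i)}(u)\leq w(C)/|C|$, though this is repairable.)

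The paper avoids any uniform $\gamma(p)$ entirely and this is the essential difference. After the same vertex-removal telescoping you describe, the paper continues removing minimum-weight vertices until every remaining weight satisfies $\x'(u)\geq\varepsilon g/17$ (Proposition~\ref{prop:sub-CRG}). From Theorem~\ref{thm:degree} one then reads off that each vertex is incident to at most $17/(\varepsilon p)$ non-gray edges, a degree bound depending only on $p$ and $\varepsilon$ (Proposition~\ref{prop:bounded degree}). Separately, for $p\in(1/3,2/3)$ the path $P_{d_p}$ is $p$-prohibited (Proposition~\ref{prop:bounded diameter}), so the underlying graph has diameter at most $d_p-1$. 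Bounded degree plus bounded diameter forces each component to have order at most $B(p,\varepsilon)$, with no appeal to a lower bound on $g$.
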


The first part of the proof is to remove vertices from a CRG $K$ one-by-one in a way which does not affect $g_K(p)$ too much.  
Once enough vertices are removed, we show that each remaining vertex is incident to a bounded number of non-gray edges.  
Finally, we use Lemma~\ref{lem:eigenvalue prohibited} to bound the diameter of $p$-core CRGs on the interval $p\in (1/3,2/3)$.  
The underlying graph has bounded degree and diameter, thus its connected components have bounded order. \\

The proof consists of a sequence of propositions:
\begin{prop}\label{prop:g of sub-CRG}
	Fix $p\in [0,1]$ and suppose $K$ is a $p$-core CRG with least two vertices.  
	If $\x\in\simplex_K$ is the optimal weight vector for $K$, i.e., $g = g_K(p) = g_K(p,\x)$, then for all $u\in V(K)$, 
	\begin{align*}
		g_{K\setminus\{u\}}(p)  \leq    g + \dfrac{\x(u)^2}{(1-\x(u))^2}.  
	\end{align*}
\end{prop}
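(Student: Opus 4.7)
The plan is to construct a feasible weight vector for $K\setminus\{u\}$ directly from the optimal weight vector $\x$ of $K$, and then to expand the quadratic form so as to isolate the contribution of the vertex $u$ using Theorem~\ref{thm:degree}. Since $K$ is $p$-core and has at least two vertices, $\x(v)>0$ for every $v\in V(K)$, and in particular $\x(u)\in(0,1)$. Hence $\y:=\restr{\x}{V(K)\setminus\{u\}}$ satisfies $\langle\y,\one\rangle=1-\x(u)>0$, so $\y':=\y/(1-\x(u))$ lies in $\simplex_{K\setminus\{u\}}$. By the definition of $g_{K\setminus\{u\}}(p)$, this gives
\begin{align*}
g_{K\setminus\{u\}}(p)\leq \langle\y',M_{K\setminus\{u\}}(p)\y'\rangle=\frac{\langle\y,M_{K\setminus\{u\}}(p)\y\rangle}{(1-\x(u))^2}.
\end{align*}

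Next I would expand $g=\langle\x,M_K(p)\x\rangle$ by peeling off the row and column indexed by $u$. Writing $m_{uv}$ for the entries of $M_K(p)$, this gives
\begin{align*}
g=\langle\y,M_{K\setminus\{u\}}(p)\y\rangle + m_{uu}\x(u)^2+2\x(u)\sum_{v\neq u}m_{uv}\x(v).
\end{align*}
The key step is to apply Theorem~\ref{thm:degree}, which says that $M_K(p)\x=g\one$ because $\x$ is the optimal weight vector of the $p$-core CRG $K$. Reading the coordinate at $u$ gives $\sum_{v\neq u}m_{uv}\x(v)=g-m_{uu}\x(u)$; substituting back yields
\begin{align*}
\langle\y,M_{K\setminus\{u\}}(p)\y\rangle = g\bigl(1-2\x(u)\bigr)+m_{uu}\x(u)^2.
\end{align*}

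Plugging this into the previous bound, it remains to verify
\begin{align*}
\frac{g(1-2\x(u))+m_{uu}\x(u)^2}{(1-\x(u))^2}\leq g+\frac{\x(u)^2}{(1-\x(u))^2},
\end{align*}
which after clearing denominators reduces to $m_{uu}\leq g+1$. This holds trivially because $m_{uu}\in\{p,1-p\}\subseteq[0,1]$ and $g\geq 0$. No genuine obstacle arises; the entire argument is a test-vector computation, and the only places where the $p$-core hypothesis is used are to invoke Theorem~\ref{thm:degree} so as to kill the cross terms involving $u$, and to guarantee $\x(u)<1$ so that the renormalization $\y/(1-\x(u))$ is well-defined.
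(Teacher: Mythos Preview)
Your proof is correct and follows essentially the same approach as the paper's: both construct the renormalized test vector $\y'=\y/(1-\x(u))$, expand the quadratic form, and use Theorem~\ref{thm:degree} to replace $\sum_{v\neq u}m_{uv}\x(v)$ by $g-m_{uu}\x(u)$. The only cosmetic difference is that the paper bounds $m_{uu}\x(u)^2\leq \x(u)^2$ a step earlier and then rewrites the resulting expression as $g+\frac{(1-g)\x(u)^2}{(1-\x(u))^2}$, whereas you carry $m_{uu}$ to the end and verify $m_{uu}\leq g+1$; these are the same inequality.
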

\begin{proof}
    Let $K':=K\setminus\{u\}$.  
    Since $K$ is $p$-core with at least two vertices, $\x(u)<1$ and we may define $\x'\in\simplex_K$ by 
    \begin{align*}
	    \x'(v):=\left\{\begin{array}{rl}
		    0, &v=u\\
		    \dfrac{\x(v)}{1-\x(u)}, &\text{otherwise}
	    \end{array}\right.
    	.
    \end{align*}
    In other words, if $\be_u\in\RR^{\vk}$ is the indicator vector for the vertex $u$, then $(1-\x(u))\x' = \x - \x(u)\be_u$.  
    Recall that $M_K(p)$ denotes the weighted adjacency matrix of $K$. 
    By Theorem~\ref{thm:degree}, $M_K(p)\x = g{\bf 1}$ and so 
    \begin{align*}
	    (1-\x(u))^2\langle \x', M_K(p)\x'\rangle 
	    &=      \langle \x-\x(u){\bf e}_u, M_K(p)(     \x-\x(u){\bf e}_u )\rangle \\
	    &=      \langle \x,g{\bf 1}\rangle - 2\x(u)\langle {\bf e}_u, g{\bf 1}\rangle + \x(u)^2\langle {\bf e}_u, M_K(p){\bf e}_u\rangle\\
	    &\leq   g - 2g\,\x(u) + \x(u)^2.  
    \end{align*}
    By definition of $g_{K'}(p)$ and since $\x'(u) = 0$, 
    \begin{align*}
	    g_{K'}(p) 
	    &\leq g_K(p,\x') \\
	    &= \dfrac{\langle \x', M_K(p)\x'\rangle}{(1-\x(u))^2} \\
	    &= \dfrac{g - 2g\,\x(u)+\x(u)^2}{(1-\x(u))^2} \\
	    &= g + \dfrac{(1 - g)\x(u)^2}{(1-\x(u))^2}\\
	    &\leq g+\dfrac{\x(u)^2}{(1-\x(u))^2}, 
    \end{align*}
    which completes the proof.  
\end{proof}~\\

\begin{prop}\label{prop:sub-CRG}
	Fix $p\in [0,1]$ and $\varepsilon\in (0,1)$. If $K$ is a $p$-core CRG with $g = g_K(p)$, then there exists a $p$-core sub-CRG $K'$ of $K$ such that the following holds: 
	\begin{enumerate}
	    \item $|V(K')|\leq 4/(\varepsilon g)$, 
	    \item $g_{K'}(p) \leq (1+17\varepsilon) g$, and 
	    \item if $\x'\in\simplex_{K'}$ is the optimal weight vector for $K'$, then
	    \begin{align*}
	        \min_{u\in V(K')}\x'(u)\geq \varepsilon g.
        \end{align*}
	\end{enumerate}
\end{prop}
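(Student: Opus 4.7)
The plan is to construct $K'$ by iteratively pruning the minimum-weight vertex of the current $p$-core CRG, using Proposition~\ref{prop:g of sub-CRG} to control the incremental growth of $g$. Set $K_0 := K$; given a $p$-core $K_i$ with optimal weight vector $\x_i$ and minimum entry $t_i := \min_v \x_i(v)$, halt and take $K' := K_i$ if $t_i \geq \varepsilon g$, and otherwise delete a minimizer $u_i$ of $\x_i$ and replace the result with any $p$-core sub-CRG $K_{i+1}$ of $K_i \setminus \{u_i\}$ satisfying $g_{K_{i+1}}(p) = g_{K_i \setminus \{u_i\}}(p)$. (Such a sub-CRG always exists: starting from an optimal weight vector and iteratively restricting to its support eventually produces a CRG whose optimal weight vector is fully supported, with the same $g$-value.) The procedure terminates since $N_i := |V(K_i)|$ strictly decreases at every step.

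Properties (1) and (3) follow immediately from the halting condition: every entry of $\x' := \x_T$ is at least $\varepsilon g$ and these entries sum to $1$, so $|V(K')| \leq 1/(\varepsilon g) \leq 4/(\varepsilon g)$.

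For (2), I would apply Proposition~\ref{prop:g of sub-CRG} at each iteration to obtain $g_{i+1} - g_i \leq t_i^2/(1-t_i)^2$. Assuming $\varepsilon g \leq 1/2$ (in the remaining case the halting condition forces $|V(K')| = 1$ and then $g_{K'}(p) \leq 1 < (1+17\varepsilon)g$ trivially), one has $(1-t_i)^{-2} \leq 4$, so it suffices to prove $\sum_{i<T} t_i^2 = O(\varepsilon g)$. The plan is to split the iterations according to whether $N_i \geq N^* := \lceil 1/(\varepsilon g) \rceil$. Since $t_i \leq 1/N_i$ (the minimum is at most the average) and the $N_i$'s form a strictly decreasing sequence of positive integers, the large-$N_i$ iterations contribute
\[
\sum_{N_i \geq N^*} t_i^2 \;\leq\; \sum_{k \geq N^*} \frac{1}{k^2} \;\leq\; \frac{1}{N^*-1} \;\leq\; 2\varepsilon g,
\]
while the small-$N_i$ iterations number at most $N^* - 1 \leq 1/(\varepsilon g)$ (again by distinctness), each contributing at most $(\varepsilon g)^2$ for a total of at most $\varepsilon g$. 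Combining yields $\sum_{i<T} t_i^2 \leq 3\varepsilon g$, so $g_T - g \leq 12\varepsilon g < 17\varepsilon g$.

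The main subtlety is that the replacement step producing $K_{i+1}$ from $K_i \setminus \{u_i\}$ may strip additional vertices, so $N_{i+1}$ need not equal $N_i - 1$; however, this only strengthens the distinct-integer bound driving both phases of the sum, and since the replacement preserves $g$, Proposition~\ref{prop:g of sub-CRG} continues to provide the correct per-iteration estimate on $g_{i+1} - g_i$, so the phase-split argument goes through without modification.
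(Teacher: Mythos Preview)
Your proof is correct and follows essentially the same approach as the paper: both iterate the minimum-weight-vertex deletion controlled by Proposition~\ref{prop:g of sub-CRG}, then split the accumulated increments according to whether $|V(K_i)|$ exceeds a threshold of order $1/(\varepsilon g)$. Your version is slightly streamlined---you run with a single halting criterion and derive (1) directly from (3), whereas the paper inserts an explicit waypoint index $a$ where $|V(K_a)|\leq 4/(\varepsilon g)$---and your constants ($12\varepsilon$ in place of $17\varepsilon$, $1/(\varepsilon g)$ in place of $4/(\varepsilon g)$) are marginally sharper, but the underlying argument is the same.
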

\begin{proof}
    Define a finite sequence of sub-CRGs 
    \begin{align*}
        K = K_0\supset K_1\supset\cdots
        \supset K_\ell
    \end{align*}
    as follows:  
    First let $K:=K_0$ and $g_0 := g_K(p)$.  
    For any $k\geq 0$ so that $|V(K_k)| \geq 2$, do the following: 
    \begin{enumerate}[(i)]
        \item Let $\x_k\in\simplex_{K_k}$ be the optimal weight vector for $K_k$, i.e., $g_{K_k}(p,\x_k) = g_{K_k}(p)$.  
        \item Let $u_k\in V(K_k)$ so that $\x_k(u_k) = \min\left\{\x_k(v) : v\in V(K_k)\right\}$.  
        \item Let $K_{k+1}$ be any $p$-core sub-CRG of $K_k\setminus\{u_k\}$.  
    \end{enumerate}
    Since each step removes at least one vertex, $\ell\leq|\vk|$.  
    For each $k\in\{0,\ldots,\ell\}$, denote $g_k:=g_{K_k}(p)$. \\
    
    Let $a\in\{0,\ldots,\ell\}$ be the minimum index $a$ so that $|V(K_a)| \leq 4/(\varepsilon g)$.   
    By definition of $a$ and by the fact that $\varepsilon,g < 1$, 
    \begin{align*}
        \x_k(u_k)   \leq    1/|V(K_k)| 
                    \leq    (\varepsilon g)/4 \leq 1/4
    \end{align*}
    for all $k\in\{0,\ldots,a-1\}$.  
    By Proposition \ref{prop:g of sub-CRG}, 
    \begin{align*}
	    g_{k+1}     \leq    g_k + \dfrac{\x_k(u_k)^2}{(1-\x_k(u_k))^2} 
	                \leq    g_k + \dfrac{\x_k(u_k)^2}{(3/4)^2} 
	                <       g_k + \dfrac{2}{|V(K_k)|^2}.  
    \end{align*}
    Because $|V(K_k)| \geq |V(K_{a-1})| + (a-1-k)$ for all $k\in\{0,\ldots,a-1\}$,
    \begin{align*}
	    g_a     &<      g + \sum_{ k=0 }^{ a-1 }\dfrac{2}{|V(K_k)|^2} \\
	            &\leq   g + \sum_{ k=0 }^{ a-1 }\dfrac{2}{\left(|V(K_{a-1})| + (a-1-k)\right)^2} \\ 
	            &<      g + \sum_{i = |V(K_{a-1})|}^\infty\dfrac{2}{i^2} \\
	            &<      g + \int_{i = |V(K_{a-1})|-1}^\infty \dfrac{2}{x^2} \, dx \\
	            &=      g + \frac{2}{|V(K_{a-1})|-1} \\
	            &\leq      g + \frac{2}{\lfloor 4/(\varepsilon g)\rfloor} .
    \end{align*}
    Since $\varepsilon g < 1$, it is the case that $\lfloor 4/(\varepsilon g)\rfloor > 2/(\varepsilon g)$. Consequently,
    \begin{align}
	    g_a     \leq   (1 + \varepsilon) g . \label{eq:ga}
    \end{align}~\\
    
    Let $b$ be the least index $a\leq b\leq \ell$ so that $\x_b(u)\geq \varepsilon g$ for all $u\in V(K_b)$.  
    Note that $b$ is well-defined since $\x_{\ell}(u) = 1 > \varepsilon g$ where $x$ is the optimal weighting for  $K_\ell$, a CRG with a single vertex.  
    For any $k\in\{a,\ldots,b-1\}$, it is the case that $\x_k(u_k) < \varepsilon g$ and that $\x_k(u_k) \leq 1/|V(K_k)|\leq 1/2$ and again by Proposition \ref{prop:g of sub-CRG}, 
    \begin{align*}
        g_{k+1}     \leq    g_k + \dfrac{\x_k(u_k)^2}{(1-\x_k(u_k))^2}
                    <       g_k + \dfrac{(\varepsilon g)^2}{(1/2)^2}
                    =       g_k + 4\varepsilon^2g^2.  
    \end{align*}
    Finally by~\eqref{eq:ga} and since $b-a\leq |V(K_a)|\leq 4/(\varepsilon g)$, 
    \begin{align*}
        g_b \leq g_a + \sum_{ k=a }^{b-1}4\varepsilon^2g^2
        \leq (1+\varepsilon)g + |V(K_a)|\cdot 4\varepsilon^2g^2
        \leq (1+17\varepsilon)g.  
    \end{align*}
    Letting $K':=K_b$, we have the desired sub-CRG.  
\end{proof}~\\

Proposition~\ref{prop:bounded degree} below implies that we can decrease the degree of the underlying graph of a CRG $K$ without changing $g_K(p)$ too much.  
\begin{prop}\label{prop:bounded degree}
    Fix $p\in (0,1)$ and $\varepsilon\in (0,1)$. If $K$ is a $p$-core CRG,  
    then there exists a $p$-core sub-CRG $K'$ of $K$ so that 
    \begin{enumerate}
        \item $g_{K'}(p) \leq (1+\varepsilon)g_K(p)$, and \item for each $u\in V(K')$, $u$ is incident in $K'$ to at most 
        \begin{align*}
            17\varepsilon^{-1}\cdot\max\left\{\dfrac1{p}, \dfrac1{1-p}\right\}
        \end{align*}
        black or white edges.  
    \end{enumerate}
\end{prop}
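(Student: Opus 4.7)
The plan is to apply Proposition~\ref{prop:sub-CRG} first, and then read off the degree bound from the balance equation of Theorem~\ref{thm:degree}, using the structural description of $p$-core CRGs given by Theorem~\ref{thm:p-core colors}. The heart of the matter is that Proposition~\ref{prop:sub-CRG} yields a sub-CRG whose optimal weight vector has entries bounded uniformly away from zero, and once each weight is $\Omega(\varepsilon g_K(p))$, no vertex can carry too many non-gray edges without violating the balance equation.

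In detail, first apply Proposition~\ref{prop:sub-CRG} to $K$ with a rescaled tolerance $\varepsilon' \approx \varepsilon/17$ to obtain a $p$-core sub-CRG $K'$ with $g_{K'}(p) \leq (1+\varepsilon) g_K(p)$, delivering condition~(1) directly, and whose optimal weight vector $\x' \in \simplex_{K'}$ satisfies $\min_{v \in V(K')} \x'(v) \geq \varepsilon' g_K(p)$. Now fix any $u \in V(K')$. Theorem~\ref{thm:degree} gives the balance equation
\begin{align*}
    g_{K'}(p) \;=\; \bigl(M_{K'}(p)\x'\bigr)(u) \;=\; m_{uu}\x'(u) + \sum_{v \neq u} m_{uv}\,\x'(v) .
\end{align*}
By Theorem~\ref{thm:p-core colors}, every non-gray edge of $K'$ is white if $p \leq 1/2$ and black if $p \geq 1/2$; in either case the matrix entry $m_{uv}$ of a non-gray edge equals $\min\{p, 1-p\}$. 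Dropping the (nonnegative) diagonal and gray contributions, and letting $d_u$ denote the number of non-gray edges of $K'$ incident to $u$, we obtain
\begin{align*}
    g_{K'}(p) \;\geq\; d_u \cdot \min\{p, 1-p\} \cdot \min_{v \in V(K')} \x'(v) .
\end{align*}
Rearranging and substituting the two bounds from Proposition~\ref{prop:sub-CRG} gives
\begin{align*}
    d_u \;\leq\; \frac{g_{K'}(p)}{\min\{p,1-p\} \cdot \min_v \x'(v)} \;\leq\; \frac{(1+\varepsilon) g_K(p)}{\min\{p,1-p\} \cdot \varepsilon' g_K(p)} \;\leq\; \frac{17}{\varepsilon}\cdot\max\!\left\{\frac1p,\frac1{1-p}\right\} ,
\end{align*}
after choosing $\varepsilon'$ so that $(1+\varepsilon)/\varepsilon' \leq 17/\varepsilon$, which is available for any $\varepsilon \in (0,1)$.

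The only mild subtlety is tuning the constant: Proposition~\ref{prop:sub-CRG} couples the two useful conclusions through the same parameter, so we must invoke it with a slightly smaller tolerance than $\varepsilon$ in order to obtain condition~(1) on the nose while keeping $\min_v \x'(v)$ large enough to get the claimed constant $17$ in condition~(2). Beyond this bookkeeping, the argument is straightforward: the structural color restriction of Theorem~\ref{thm:p-core colors} ensures that a single coefficient $\min\{p,1-p\}$ controls every non-gray contribution to the balance equation, and the rest is arithmetic.
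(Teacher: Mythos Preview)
Your approach is essentially identical to the paper's: apply Proposition~\ref{prop:sub-CRG} with parameter $\varepsilon/17$, then read off the non-gray degree bound from the balance equation $M_{K'}(p)\x' = g_{K'}(p)\one$ using the color restrictions of Theorem~\ref{thm:p-core colors}. The only cosmetic difference is that the paper argues for $p\in(0,1/2]$ and then invokes duality for $p>1/2$, whereas you treat both regimes at once via the uniform coefficient $\min\{p,1-p\}$ on non-gray edges.

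One minor bookkeeping point: the two constraints you impose on $\varepsilon'$---namely $17\varepsilon'\le\varepsilon$ (to secure condition~(1) ``on the nose'') and $(1+\varepsilon)/\varepsilon'\le 17/\varepsilon$ (to secure the constant $17$ in condition~(2))---are not simultaneously satisfiable, so strictly you obtain $17(1+\varepsilon)/\varepsilon$ rather than $17/\varepsilon$. The paper's proof glosses over the same $(1+\varepsilon)$ loss by writing $g$ for both $g_K(p)$ and $g_{K'}(p)$, and in any case the precise value of the constant is immaterial downstream.
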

\begin{proof}
    We prove the claim for all $p\in (0,1/2]$. By duality (that is, by replacing $K$ with $\overline{K}$ and $p$ with $1-p$) the claim holds also for all $p\in [1/2,1)$.  
    By Proposition \ref{prop:sub-CRG} applied to $K$ and $\varepsilon/17$, there exists a sub-CRG $K'$ of $K$ so that $g_{K'}(p)\leq (1+\varepsilon)g_K$ and whose  optimal weight vector $\x\in\simplex_{K'}$ has $\x(u)\geq \varepsilon g/17$ for all $u\in V(K')$.  
    By Theorem \ref{thm:p-core colors}, the white vertices of $K'$ are incident to no white or black edges.  
    So it suffices to prove the desired inequality for black vertices.  
    Suppose $u\in \vb(K')$.  
    By Theorem \ref{thm:degree}, $M_{K'}(p)\x = g\one$ and it follows that 
    \begin{align*}
        \dfrac{g}{p}    >   \dfrac{g}{p} - \dfrac{1-p}p\,\x(u)
                        =   \sum_{uv\in\ew(K')}\x(v)
        \geq \dfrac{\varepsilon g}{17}\cdot \left|\{v : uv\in\ew(K')\}\right| .  
    \end{align*}
    So the number of vertices adjacent to a vertex via a non-gray edge is at most $17/(\varepsilon p)$, as desired.  
\end{proof}~\\

Next, we uniformly bound the diameter of all $p$-core CRGs, for each $p\in (1/3,2/3)$.
\begin{prop}\label{prop:bounded diameter}
	For all positive integers $d$, the CRG associated to the path $P_d$ on $d$ vertices is $p$-prohibited for all 
	\begin{align}
		p\in \left[\dfrac{1}{1+2\cos(\pi/(d+1))}, 1-\dfrac{1}{1+2\cos(\pi/(d+1))}\right]. \label{eq:prohibited range} 
	\end{align}
\end{prop}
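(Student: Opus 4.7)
The plan is to invoke Lemma \ref{lem:eigenvalue prohibited} with the exact minimum eigenvalue of the adjacency matrix of $P_d$. Recall the lemma promises $p$-prohibition for the CRG associated to a nonempty graph $G$ on the symmetric interval $[1/(1-\lambda), 1 - 1/(1-\lambda)]$ provided the minimum adjacency eigenvalue $\lambda$ of $G$ is at most $-1$. Matching this with (\ref{eq:prohibited range}) suggests that it suffices to show the minimum eigenvalue of $P_d$ equals $\lambda = -2\cos(\pi/(d+1))$, since then $1 - \lambda = 1 + 2\cos(\pi/(d+1))$, producing exactly the interval in the statement.

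First I would dispatch the boundary case $d = 1$: here $\cos(\pi/2) = 0$, so the interval in (\ref{eq:prohibited range}) collapses to $[1, 0]$ and the statement is vacuous---which is fortunate, since $P_1$ is edgeless and Lemma \ref{lem:eigenvalue prohibited} does not apply. For $d \geq 2$ I would compute the spectrum of $P_d$ via the classical eigenvectors whose $j$-th entry equals $\sin(jk\pi/(d+1))$, for $k = 1, \ldots, d$. A short calculation using the identity $\sin((j-1)\theta) + \sin((j+1)\theta) = 2\cos(\theta)\sin(j\theta)$, together with the observation that the boundary terms vanish at $j = 0$ and $j = d+1$, verifies that each such vector is an eigenvector with eigenvalue $2\cos(k\pi/(d+1))$. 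Since the $d$ phases $k\pi/(d+1)$ produce $d$ distinct eigenvalues, we have the full spectrum; the minimum is attained at $k = d$, yielding $\lambda = -2\cos(\pi/(d+1))$. For $d \geq 2$ this satisfies $\lambda \leq -2\cos(\pi/3) = -1$, so the hypothesis of Lemma \ref{lem:eigenvalue prohibited} is met.

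Plugging this $\lambda$ into the formula from Lemma \ref{lem:eigenvalue prohibited} yields precisely the interval in (\ref{eq:prohibited range}), which is exactly the desired conclusion. There is no real obstacle here: the eigenvalue formula is classical and can be quoted from, e.g., \cite{SpectraGraphBook}, while everything else is a direct substitution into the already-established Lemma \ref{lem:eigenvalue prohibited}. The only bookkeeping item worth flagging is that the lemma's hypothesis $\lambda \leq -1$ is equivalent to $d \geq 2$, which aligns perfectly with the fact that the $d = 1$ interval degenerates to the empty set.
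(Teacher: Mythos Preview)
Your proposal is correct and follows essentially the same approach as the paper: quote the classical spectrum of $P_d$, identify the minimum eigenvalue as $-2\cos(\pi/(d+1))$, and apply Lemma~\ref{lem:eigenvalue prohibited}. Your additional care with the vacuous $d=1$ case and the verification that $\lambda\le -1$ for $d\ge 2$ are details the paper leaves implicit but are entirely in the same spirit.
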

\begin{proof}
    It is a well-known fact from spectral graph theory that the spectrum of the adjacency matrix of $P_d$, the path on $d$ vertices is the multiset $\left\{2\cos\left(\frac{\pi k}{d+1}\right) : k\in\left\{1,\ldots,d\right\}\right\}$. 
    See, for example, ~\cite{CvetkovicRowlinsonSimicGraphSpectra}. 
    In particular, the minimum such eigenvalue is $-2\cos(\pi/(d+1))$.
    Lemma~\ref{lem:eigenvalue prohibited} gives that $P_d$ is $p$-prohibited for $p$ in the stated range.
\end{proof}~\\

Finally, we prove Lemma \ref{lem:bounded components}.  
\begin{proof}[Proof of Lemma~\ref{lem:bounded components}]
    Since the sequence $\left\{\frac{1}{1+2\cos(\pi/(d+1))}\right\}$ is monotone decreasing and converges to $1/3$, there exists a positive integer $d = d_p$ so that $P_d$ is $p$-prohibited.  
    Let $K'$ be the sub-CRG from Proposition~\ref{prop:bounded degree} and write $G$ for its underlying graph.  
    Then by construction, $G$ has degree at most 
    \begin{align*}
        D = 17\varepsilon^{-1}\cdot \max\left\{
            \dfrac1p, \dfrac1{1-p}
        \right\}.  
    \end{align*}
    Let $C$ be any component of $G$.  
    Since $P_d$ is $p$-prohibited, $C$ has diameter at most $d-1$.  
    It follows that 
    \begin{align*}
        |C| \leq 1 + D + D(D-1)+\cdots+D(D-1)^{d-1} =: B(p,\varepsilon), 
    \end{align*}
    as desired.  
\end{proof}~\\

\subsection{Forbidding a random graph}
\label{sub-sec:forbidding a random graph}

Now we proceed to prove our main result, Theorem~\ref{thm:ed of random graph}. First, recall that Theorem \ref{thm:H-chromatic number} says that if $F'\sim\GG(n_0',p_0)$ then \aas, 
\begin{align*}
    \chi_H(F') = (1+o(1))\,c_\hh(p_0)\,\dfrac{n_0'}{2\log_2n_0'}.  
\end{align*}
This is useful because in the proof of Lemma~\ref{lem:B-spectrum of random graph}, we repeatedly decompose induced subgraphs of a random graph of order $n_0\gg n_0'$.  
An essential tool is the following restatement of a theorem of Bollob\'as and Thomason:  
\begin{lem}[Lemma 5.1 from \cite{BollobasThomasonStructureColoring}]\label{lem:c and g}
    Let $K$ be a CRG and define $\hh$ to be the hereditary property of graphs, $G$, such that $G\mapsto K$.  
    For all $p_0\in (0,1)$, 
    \begin{align*}
        c_\hh(p_0)  =   -\log_2(p_0(1-p_0)) \cdot 
                        g_K\left(p^*\right)
    \end{align*}
\end{lem}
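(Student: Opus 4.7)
The plan is to estimate $\PP[\GG(k,p_0) \in \hh] = \PP[\GG(k,p_0) \mapsto K]$ on the $-\log_2$ scale by classifying all potential embeddings $\phi:\{1,\ldots,k\} \to V(K)$. By independence of the edges of $\GG(k,p_0)$ and the definition of $G \mapsto K$, a fixed $\phi$ with fiber sizes $(n_u)_{u \in V(K)}$ is a valid embedding with probability
\begin{align*}
    \prod_{u \in \vwk} (1-p_0)^{\binom{n_u}{2}} \cdot \prod_{u \in \vbk} p_0^{\binom{n_u}{2}} \cdot \prod_{uv \in \ewk} (1-p_0)^{n_u n_v} \cdot \prod_{uv \in \ebk} p_0^{n_u n_v},
\end{align*}
since gray edges and cross-pairs between vertices of opposite color impose no constraint. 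Setting $\x(u) := n_u/k$, $a := -\log_2(1-p_0)$, $b := -\log_2 p_0$, $W(\x) := \sum_{u \in \vwk}\x(u)^2 + 2\sum_{uv \in \ewk}\x(u)\x(v)$, and $B(\x)$ analogously by swapping white with black throughout, taking $-\log_2$ of the above product yields
\begin{align*}
    -\log_2 \PP[\phi \text{ valid}] = \binom{k}{2}\bigl[a\cdot W(\x) + b\cdot B(\x)\bigr] + O(k).
\end{align*}

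Next I would invoke the algebraic identity $p^* = a/(a+b)$, immediate from the definition of $p^*$ in \eqref{eq:p star}, together with $a+b = -\log_2(p_0(1-p_0))$, to rewrite
\begin{align*}
    a\cdot W(\x) + b\cdot B(\x) = -\log_2\bigl(p_0(1-p_0)\bigr) \cdot \bigl[p^* W(\x) + (1-p^*) B(\x)\bigr] = -\log_2\bigl(p_0(1-p_0)\bigr) \cdot \langle \x, M_K(p^*) \x\rangle.
\end{align*}
Minimizing this expression over $\x \in \simplex_K$ gives exactly $-\log_2(p_0(1-p_0)) \cdot g_K(p^*)$. It then remains to match upper and lower bounds on $c_\hh(p_0)$. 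For the \textbf{upper bound} on $c_\hh(p_0)$ (i.e.\ a lower bound on the probability), I would exhibit the single embedding whose fiber sizes approximate an optimizer $\x^* \in \simplex_K$ of $g_K(p^*)$ to accuracy $O(1/k)$; the continuity of $\x \mapsto \langle \x, M_K(p^*)\x\rangle$ ensures only $O(k)$ error in the $-\log_2$ scale. For the matching \textbf{lower bound} I would union-bound over all $|V(K)|^k = 2^{O(k)}$ candidate maps $\phi$; the union-bound factor adds only $O(k)$ to $-\log_2 \PP$, negligible compared to $\binom{k}{2}$. Dividing by $\binom{k}{2}$ and letting $k \to \infty$ yields the claimed equality.

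The main technical obstacle is controlling the error terms uniformly so that they disappear in the $k \to \infty$ limit, in particular handling rounding when $\x^*$ has some coordinates close to $0$ (so the corresponding integer fiber sizes might be $0$, effectively embedding into a proper sub-CRG). A standard continuity argument on $\simplex_K$ resolves this: for each $\varepsilon > 0$, one can truncate and approximate $\x^*$ by a rational point with denominator $k$, incurring $O(\varepsilon)$ loss in $\langle \x, M_K(p^*)\x\rangle$, and then send $\varepsilon \to 0$ after $k \to \infty$.
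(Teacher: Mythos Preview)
The paper does not give its own proof of this lemma; it is quoted verbatim as a restatement of Lemma~5.1 of Bollob\'as--Thomason~\cite{BollobasThomasonStructureColoring} and used as a black box. So there is no ``paper's proof'' to compare against.

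Your argument is correct and is essentially the standard proof of the Bollob\'as--Thomason result. The key points---computing $\PP[\phi\text{ is a valid embedding}]$ for a fixed map $\phi$, recognising the exponent as $\binom{k}{2}\langle\x,M_K(p^*)\x\rangle+O(k)$ via the identity $p^*=a/(a+b)$, bounding the probability from below by a single near-optimal $\phi$ and from above by a union bound over the $|V(K)|^k$ maps---are all right, and the $O(k)$ slack from rounding and from the union-bound factor indeed vanishes after dividing by $\binom{k}{2}$.

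One small correction: your phrase ``gray edges and cross-pairs between vertices of opposite color impose no constraint'' is not accurate. For a pair $i,j$ with $\phi(i)=u\neq v=\phi(j)$, the constraint is governed entirely by the \emph{edge} colour of $uv$ in $K$, not by the vertex colours of $u$ and $v$; only gray edges impose no constraint. Your displayed product is nevertheless correct, so this is a slip in the prose rather than in the mathematics.
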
~\\

\begin{lem}\label{lem:B-spectrum of random graph}
    Let $p_0$ and $\varepsilon$ be fixed such that $p_0\in (0,1)$ and $\varepsilon\in (0,1)$. Moreover, fix $\bb$ to be a finite set of CRGs. If $F\sim\GG(n_0,p_0)$, the following holds \aas~as $n_0\to\infty$:  
    For all CRGs $K$ such that all components of $K$ lie in $\bb$ and $F\not\mapsto K$, then 
    \begin{align}\label{eq:g threshold}
        (1+\varepsilon)\, g_{K}\left(p^*\right)
        \geq\dfrac{2\log n_0}{-\log(p_0(1-p_0))\cdot n_0} .  
    \end{align}
\end{lem}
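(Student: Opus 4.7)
The plan is to prove the contrapositive: a.a.s.\ over $F \sim \GG(n_0, p_0)$, every CRG $K$ with components in $\bb$ and $g_K(p^*) < \frac{2 \log n_0}{(1+\varepsilon)(-\log(p_0(1-p_0)))\, n_0}$ satisfies $F \mapsto K$. Writing $K = \bigoplus_{B \in \bb} m_B \cdot B$ by grouping isomorphic components (valid since $\bb$ is finite), the embedding $F \mapsto K$ is equivalent to the existence of a partition $V(F) = \bigsqcup_{B \in \bb} U_B$ with $\chi_{\hh_B}(F[U_B]) \leq m_B$ for each $B$, where $\hh_B := \{G : G \mapsto B\}$. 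By Lemma~\ref{lem:c and g}, $c_{\hh_B}(p_0) = -\log_2(p_0(1-p_0))\, g_B(p^*)$, and by Proposition~\ref{prop:archapelago}, $\sum_B m_B/c_{\hh_B}(p_0) = 1/c_{\hh_K}(p_0)$; the hypothesis translates into $\sum_B m_B/c_{\hh_B}(p_0) > (1+\varepsilon) n_0/(2\log_2 n_0)$, providing enough aggregate chromatic capacity.

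The central step is a uniform chromatic bound: a.a.s.\ for every $B \in \bb$ and every $U \subseteq V(F)$ with $|U|$ above some threshold $s_0 = s_0(\varepsilon, p_0, \bb)$,
\[
    \chi_{\hh_B}(F[U]) \;\leq\; (1 + \varepsilon/4)\, \frac{c_{\hh_B}(p_0)\, |U|}{2\log_2 |U|}.
\]
This follows from the argument underlying Theorem~\ref{thm:H-chromatic number}: a first-moment calculation shows the expected number of $\hh_B$-embeddable $s$-subsets of $V(F)$ is $\binom{n_0}{s}\, 2^{-c_{\hh_B}(p_0)\binom{s}{2}(1+o(1))} = n_0^{\Omega(s)}$ at the critical $s$, and a Janson-type concentration combined with a union bound over $B \in \bb$ (finite) and over relevant subsets yields uniformity; greedy extraction of $\hh_B$-embeddable blocks from each $U$ then delivers the chromatic bound.

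Using this uniform bound, set $|U_B|$ to be the largest integer $k$ with $k/\log_2 k \leq 2 m_B/((1+\varepsilon/4) c_{\hh_B}(p_0))$ when $m_B \geq 1$, and $|U_B| = 0$ otherwise. The capacity identity above ensures $\sum_B |U_B| > n_0$, so one trims and partitions $V(F)$ with these block sizes. Applying the uniform bound to each $U_B$ yields $\chi_{\hh_B}(F[U_B]) \leq m_B$ by construction, which assembles into the desired embedding $F \mapsto K$.

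The main obstacle is the uniform chromatic bound at small subset scales: when $m_B$ is small (say $O(1)$), the chosen $|U_B|$ drops below the range where Theorem~\ref{thm:H-chromatic number} applies cleanly, and the concentration needed to union-bound over subsets becomes delicate. A useful reduction is that in the bad regime $\sum_B m_B \gtrsim n_0/\log n_0$, so some $m_{B^*} \gtrsim n_0/(|\bb|\log n_0)$; restricting attention to components with $m_B \geq M$ for a large constant $M = M(\varepsilon, \bb)$ discards only an $O(1)$ chromatic-capacity deficit (negligible against the $(1+\varepsilon)$ slack), and the remaining components have $|U_B| = \Omega(\log n_0)$, for which the Bollob\'as-Thomason estimates are sharp enough to support the union bound.
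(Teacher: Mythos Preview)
Your proposal has a genuine gap at the ``uniform chromatic bound'' step, and this gap is precisely what the paper's argument is designed to avoid.

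You claim that \aas, for every $B\in\bb$ and \emph{every} $U\subseteq V(F)$ with $|U|\geq s_0$, one has $\chi_{\hh_B}(F[U])\leq (1+\varepsilon/4)\,c_{\hh_B}(p_0)\,|U|/(2\log_2|U|)$. Theorem~\ref{thm:H-chromatic number} gives this for a \emph{single} random graph, with no quantified error probability; to union-bound over the relevant subsets $U$ (which you must do, since your partition $(U_B)_B$ depends on the tuple $(m_B)_B$ and there are at least $n_0^{\Omega(1)}$ relevant tuples), you would need the failure probability in Theorem~\ref{thm:H-chromatic number} to decay like $n_0^{-\omega(1)}$. Neither the statement nor the proof in~\cite{BollobasThomasonStructureColoring} provides this, and your sketch (``first-moment plus Janson-type concentration'') is not a proof: the upper-tail concentration needed for $\chi_{\hh_B}$ at this strength is a substantial result in its own right, not something one can wave at. Your final paragraph correctly identifies this as the main obstacle but does not resolve it; saying that for $|U_B|=\Omega(\log n_0)$ ``the Bollob\'as--Thomason estimates are sharp enough to support the union bound'' is an assertion, not an argument.

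The paper sidesteps the whole issue by a simple device you are missing: fix, in advance and independently of $K$, an equipartition of $V(F)$ into a \emph{constant} number $C=C(\varepsilon,|\bb|)$ of blocks $I_1,\ldots,I_C$. For each $(k,B)\in\{1,\ldots,C\}\times\bb$ apply Theorem~\ref{thm:H-chromatic number} once to the random graph $F[I_k]\sim\GG(n_0/C,p_0)$ with property $\hh_B$; since $C\cdot|\bb|$ is a fixed constant, no quantitative error rate is needed --- the intersection of these $C|\bb|$ \aas\ events is itself \aas. On that intersection, for any bad $K=\bigoplus_B m_B\cdot B$ one simply assigns whole blocks $I_k$ to colors $B$ (setting $M_B=\lfloor m_B/m_B'\rfloor$ blocks to color $B$, where $m_B'$ is the \aas\ $\hh_B$-chromatic number of one block), and a short calculation using Proposition~\ref{prop:archapelago} shows $\sum_B M_B\geq C$ whenever $g_K(p^*)$ is below the threshold. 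The point is that a single fixed partition works simultaneously for all bad $K$, so uniformity over subsets is never needed.
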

\begin{proof}
    For any function $\mu:\bb\to\{0,1,\dots\}$, define the CRG, $K_{\mu}$ as follows:
    \begin{align*}
        K_\mu   :=  \bigoplus_{B\in\bb}\mu(B)\cdot B.  
    \end{align*}
    That is, $K_{\mu}$ consists of a disjoint union of $\mu(B)$ copies of $B$, for all $B\in\bb$.
    For any induced subgraph $G$ of the random graph $F\sim\GG(n_0,p_0)$ and any CRG $K$, we will denote the event that $G$ embeds into $K$ by $[G\mapsto K]$.  
    Additionally, let 
    \begin{align*}
        E_{\mu}     :=  [F\mapsto K_\mu].  
    \end{align*}~\\
 
     Let
     \begin{align*}
        \bb_0   :=  \left\{ \mu:\bb\to\{0,1,\dots\} : 
                            (1+\varepsilon) g_{K_\mu}(p^*) < \dfrac{2\log n_0} {-\log(p_0(1-p_0))\cdot n_0} 
                    \right\}.  
    \end{align*}
    To prove the desired claim, it is equivalent to show that the probability that $F\not\mapsto K_{\mu}$ for all $\mu\in\bb_0$ goes to zero. That is, 
    \begin{align}
        \lim_{n_0\to\infty} \PP\left[ \bigcup_{\mu\in\bb_0} \overline{E_{\mu}} \right]
        = 0. \label{eq:singleunion}
    \end{align}~\\
    
    Recall $F\sim\GG(n_0,p_0)$. We will partition $V(F)$ by setting  
    \begin{align*}
        C   :=  \left\lceil 2|\bb|\cdot \dfrac{1+\varepsilon/2}{\varepsilon/2}\right\rceil
    \end{align*}
    and let $I_1,\ldots,I_C$ be an equipartition of $V(F)$, i.e., $|I_k|\in\{\lfloor n_0/C\rfloor,\lceil n_0/C\rceil\}$ for $k\in \{1,\ldots,C\}$. Let $n_0'=\lceil n_0/C\rceil$. \\

    For any $B\in\bb$, set 
    \begin{align*}
        m_B     :=  \left\lfloor \dfrac{(1+\varepsilon/2)\cdot (n_0/C) \cdot \left(-\log(p_0(1-p_0))\right) \cdot g_B(p^*)}{2\log (n_0/C)} \right\rfloor .
    \end{align*}
    We expect to be able to $(m_B\cdot B)$-color a $\GG(n_0', p_0)$ graph (hence a $\GG(n_0'-1, p_0)$ graph as well).  
    Now for any $k\in\{1,\ldots,C\}$ and any $B\in\bb$, we define the event 
    \begin{align*}
        E_{k,B}     :=  [F[I_k]\mapsto m_B\cdot B] . 
    \end{align*}
    In other words, $E_{k,B}$ is the event that the subgraph of $F$ that is induced by vertices in $I_k$ is colorable by $m_B$ copies of the CRG $B$. \\

    The induced subgraphs $F[I_1],\dots,F[I_C]$ are each independently sampled according to the Erd\H{o}s-R\'{e}nyi random graph model $\GG(n_0',p_0)$.  
    Moreover, the number of events of the form $E_{k,B}$ is equal to $C\cdot |\bb|$, which is bounded as a function of the constants $p_0$, $\varepsilon$, and $|\mathcal{B}|$. \\

    Then by Theorem \ref{thm:H-chromatic number} and Lemma \ref{lem:c and g}, since the number of vertices in each $I_k$ uniformly tends to $\infty$, 
    \begin{align}
        \lim_{n_0\to\infty} \PP\left[\bigcup_{k\in\{1,\ldots,C\}} \bigcup_{B\in\bb} \overline{E_{k,B}}\right]
         = 0. \label{eq:doubleunion} 
    \end{align}~\\
    
    It suffices to show that  
    \begin{align*}
        \bigcup_{\mu\in\bb_0} \overline{E_{\mu}} 
        \subseteq \bigcup_{k\in\{1,\ldots,C\}} \bigcup_{B\in\bb} \overline{E_{k,B}} ,  
    \end{align*}
    because then \eqref{eq:doubleunion} will imply \eqref{eq:singleunion} and complete the proof. \\
    
    Indeed, suppose $\mu\in \bb_0$ and suppose $\varphi_{k,B}$ are embeddings defined by the events $E_{k,B}$, for all $k\in \{1,\ldots,C\}$ and all $B\in\bb$.  
    Further, for any $B\in\bb$, let 
    \begin{align*}
        M_B     :=  \left\lfloor\dfrac{\mu(B)}{m_B}\right\rfloor .
    \end{align*}~\\
    
    We will show that if $\sum_{B\in\bb} M_B\geq C$, then $F$ can be colored by $M_B$ copies of $m_B\cdot B$, over all $B\in\bb$, and so $F\mapsto K_{\mu}$. 
    We begin by summing the $M_B$'s.  
    \begin{align*}
        \sum_{B\in\bb} M_B
        &=      \sum_{B\in\bb} \left\lfloor\dfrac{\mu(B)}{m_B}\right\rfloor\\
        &\geq   \sum_{B\in\bb} \left(\dfrac{\mu(B)}{m_B}-1\right) \\
        &=      -|\bb| + \sum_{B\in\bb} \dfrac{\mu(B)\cdot 2\log(n_0/C)}{(1+\varepsilon/2)\cdot (n_0/C) \cdot \left(-\log(p_0(1-p_0))\right)\cdot g_B(p^*)} \\
        &=      -|\bb| + \dfrac{2C\log(n_0/C)}{(1+\varepsilon/2)n_0}
        \cdot \sum_{B\in\bb}
            \dfrac{\mu(B)}{-\log(p_0(1-p_0))\cdot g_B(p^*)} .
    \end{align*}
    By Proposition~\ref{prop:archapelago}, 
    \begin{align*}
        \sum_{B\in\bb} M_B
        &=      -|\bb| + \dfrac{2C\log(n_0/C)}{(1+\varepsilon/2)n_0}
        \cdot \dfrac{1}{-\log(p_0(1-p_0))\cdot g_{K_\mu}(p^*)} .
    \end{align*}
    By~\eqref{eq:g threshold},
    \begin{align*}
        \sum_{B\in\bb} M_B
        &\geq   -|\bb| + \dfrac{2C\log(n_0/C)}{(1+\varepsilon/2)n_0}
        \cdot \dfrac{(1+\varepsilon)n_0}{2\log n_0}
        \\
        &=      -|\bb| + C \left(1 - \frac{\log C}{\log n_0}\right) \left(1 + \frac{\varepsilon}{2+\varepsilon}\right) \\
        &=      C + C \left(\frac{\varepsilon}{2+\varepsilon} - \frac{2+2\varepsilon}{2+\varepsilon}\cdot\frac{\log C}{\log n_0}\right) - |\bb| 
    \end{align*}~\\
    
    With $\varepsilon$ fixed and $n_0 \gg C \gg |\bb|$, we have $\sum_{B\in\bb} M_B \geq C$, as desired.  
    Thus, the index set $\{1,\ldots,C\}$ may be partitioned as 
    \begin{align*}
        \{1,\ldots,C\}  =   \bigudot_{B\in\bb} S_B
    \end{align*}
    so that $|S_B|\leq M_B$, for each $B\in\bb$.  
    Also, for each $B\in\bb$, we combine the embeddings $\varphi_{B,k}$ for all $k\in S_B$ to form an embedding
    \begin{align*}
        F\left[\bigudot_{k\in S_B}I_k\right]
        \mapsto (|S_B|\cdot m_B)\cdot B\subseteq \mu(B)\cdot B . 
    \end{align*}
    Combining all such embeddings, $F\mapsto K_\mu$, as desired.  
    So 
    \begin{align*}
        \bigcap_{k\in\{1,\ldots,C\}} \bigcap_{B\in\bb} E_{k,B}
        \subseteq \bigcap_{\mu\in\bb_0} E_{\mu}
        \qquad\Longleftrightarrow\qquad
        \bigcup_{\mu\in\bb_0} \overline{E_{\mu}}
        \subseteq \bigcup_{k\in\{1,\ldots,C\}} \bigcup_{B\in\bb} \overline{E_{k,B}} . 
    \end{align*}
    This completes the proof of the desired claim.  
\end{proof}~\\

Finally, we are ready to prove Theorem \ref{thm:ed of random graph}.~\\  

\subsection{Proof of Theorem~\ref{thm:ed of random graph}}
\label{sub-sec:ed of random graph}
    Formally, our goal is to show that for each $\varepsilon\in(0,1)$, the following occurs~\aas~as $n_0\rightarrow\infty$: 
    \begin{align}
        \sup_{p\in I}\left|\ed_{\hh}(p) \left(\dfrac{2\log n_0}{n_0} \cdot 
            \min\left\{\dfrac{p}{-\log(1-p_0)}, \dfrac{1-p}{-\log p_0}\right\}\right)^{-1} - 1\right|<\varepsilon , \label{eq:formal statement}
    \end{align}
    where 
    \begin{align*}
        I   =   \left\{     \begin{array}{rl}
                                \left[0,1\right] ,    & \mbox{if $p^*\in [\phirange]$;} \\
                                \left[1/3,1\right] ,  & \mbox{if $p^*\in [0,1-\varphi^{-1})$;} \\
                                \left[0,2/3\right] ,  & \mbox{if $p^*\in (\varphi^{-1},1]$.}
                            \end{array}\right.
    \end{align*} \\
    
    We first establish an upper bound for $\ed_{\hh}(p)$ for all $p\in [0,1]$. 
    By the main result in~\cite{BollobasChromatic}, if $F\sim\GG(n_0,p_0)$, then \aas~as $n_0\rightarrow\infty$, 
    \begin{align*}
        \chi(F)-1       &\geq  (1-\varepsilon/2)\; \dfrac{n_0}{2 \log_{1/(1-p_0)} n_0} , \\ 
        \chi(F^c)-1     &\geq  (1-\varepsilon/2)\; \dfrac{n_0}{2 \log_{1/p_0} n_0} .
    \end{align*}
    Clearly $F$ does not map into $\chi(F)-1$ white vertices or $\chi(F^c)-1$ black vertices.  
    By, for example Theorem \ref{thm:inf = min}, and the fact that $1/(1-\varepsilon/2) < 1+\varepsilon$, then the following occurs~\aas:
    \begin{align}
        \ed_{\hh}(p) 
        &   \leq (1+\varepsilon)\; \min\left\{\dfrac{p}{n_0/(2 \log_{1/(1-p_0)} n_0)}, \dfrac{1-p}{n_0/(2 \log_{1/p_0} n_0)}\right\} \nonumber \\
        &   = (1+\varepsilon)\; \dfrac{2\log n_0}{n_0} \cdot 
            \min\left\{\dfrac{p}{-\log(1-p_0)}, \dfrac{1-p}{-\log p_0}\right\} .  \label{eq:upper bound on ed of random graph}
    \end{align}~\\
    
    We now find a lower bound to match Inequality~\eqref{eq:upper bound on ed of random graph} over the interval $I$ as stated above.  We will do this by finding the lower bound for $p\in (1/3,2/3)$ and then use concavity show how this extends to $I$ in the various cases. \\ 
    
    We will choose a $\pt\in (1/3,2/3)$ depending on the case: 
    \begin{align*}
        \pt     &:=     \left\{ \begin{array}{rl}
                                p^*,                & \mbox{if $p^*\in (1/3,2/3)$;} \\
                                1/3+\varepsilon/9,  & \mbox{if $p^*\in (0,1/3]$;} \\
                                2/3-\varepsilon/9,  & \mbox{if $p^*\in [2/3,1)$.}
                                \end{array} \right.
    \end{align*}
    Let $K = K(\pt)\in\kk_\hh$ be a $\pt$-core CRG that satisfies $\ed_{\hh}(\pt) = g_K(\pt)$, as guaranteed by Theorem~\ref{thm:inf = min}. \\ 

    Since $\pt\in (1/3,2/3)$,  Lemma~\ref{lem:bounded components} gives that there exists a sub-CRG $K' = K'(\pt,\varepsilon/4)$ of $K$ so that 
    \begin{align}
        g_K(\pt)  \leq    g_{K'}(\pt)   \leq    (1+\varepsilon/4) \, g_K(\pt) \label{eq:K prime sandwich}
    \end{align}
    and whose components lie in some finite set $\bb = \bb(\pt,\varepsilon/4)$ of CRGs. \\  
    
    The function $g_{K'} : [0,1]\to [0,1]$ is concave-down.
    To see this, let $M_{K'}(p)$ be the matrix defined by the CRG $K$. Let $p_1,p_2\in [0,1]$, $t\in [0,1]$, and let $\x\in\Delta_{K'}$ be the vector that witnesses the value of $g_{K'}$ at $t p_1 + (1-t) p_2$,
    \begin{align*}
        g_{K'}\left(t p_1 + (1-t) p_2\right) 
        &=      \langle \x, M_{K'}\left(t p_1 + (1-t) p_2\right) \x\rangle \\
        &=      t\cdot\langle\x, M_{K'}(p_1)\x\rangle + (1-t)\cdot\langle \x, M_{K'}(p_2) \x\rangle \\
        &\geq   t\cdot g_{K'}(p_1) + (1-t)\cdot g_{K'}(p_2) ,
    \end{align*}
    establishing the concavity of $g_{K'}$. 
    Since $g_{K'}(0),g_{K'}(1)\geq 0$, the graph of $g_{K'}$ lies above the line segment from $(0,0)$ to $(\pt,g_{K'}(\pt))$ to $(1,0)$.  
    So 
    \begin{align}\label{eq:concavity lower bound}
        g_{K'}(p) 
        \geq 
             g_{K'}(p^*)\cdot 
             \min\left\{\dfrac{p}{p^*}, \dfrac{1-p}{1-p^*}\right\}
    \end{align}

    Also by Lemma \ref{lem:B-spectrum of random graph} (recall the definition of $p^*$ from~\eqref{eq:p star}), the following is true~\aas:
    \begin{align}
        g_{K'}(p^*)     \geq    (1-\varepsilon/4)\cdot\dfrac{2\log n_0}{-\log(p_0(1-p_0)) \cdot n_0}. \label{eq:K prime p star}  
    \end{align}~\\
    
    Combining~\eqref{eq:K prime sandwich},~\eqref{eq:concavity lower bound}, and~\eqref{eq:K prime p star},
    \begin{align}
        \ed_\hh(\pt) 
        &=      g_K(\pt) \nonumber \\
        &\geq   \frac{1}{1+\varepsilon/4}\, g_{K'}(\pt) \nonumber \\
        &\geq   \frac{1}{1+\varepsilon/4}\, g_{K'}(p^*)\cdot \min\left\{\dfrac{\pt}{p^*}, \dfrac{1-\pt}{1-p^*}\right\} \nonumber \\
        &\geq   \frac{1-\varepsilon/4}{1+\varepsilon/4}\cdot \frac{2\log n_0}{-\log (p_0(1-p_0))\cdot n_0}\cdot \min\left\{\dfrac{\pt}{p^*}, \dfrac{1-\pt}{1-p^*}\right\} \nonumber \\
        &\geq   (1-\varepsilon/2)\, \frac{2\log n_0}{-\log (p_0(1-p_0))\cdot n_0}\cdot \min\left\{\dfrac{\pt}{p^*}, \dfrac{1-\pt}{1-p^*}\right\} 
        \label{eq:ed p lower bound min}
    \end{align}~\\

    \noindent\textbf{Case 1.} $p_0\in (\phirange)$ $\Longleftrightarrow$ $p^*\in (1/3,2/3)$. \\
    \indent In this case, $\pt = p^*$.  
    By~\eqref{eq:ed p lower bound min} and the concavity of $\ed_{\hh}(p)$, 
    \begin{align*}
        \ed_{\hh}(p^*) 
        &\geq   (1-\varepsilon/2)\, \frac{2\log n_0}{-\log (p_0(1-p_0))\cdot n_0} \\
        \ed_{\hh}(p) 
        &\geq   (1-\varepsilon/2)\, \frac{2\log n_0}{-\log (p_0(1-p_0))\cdot n_0}\cdot \min\left\{\dfrac{p}{p^*}, \dfrac{1-p}{1-p^*}\right\} \\
        &=   (1-\varepsilon/2)\, \frac{2\log n_0}{n_0}\cdot \min\left\{\dfrac{p}{-\log (1-p_0)}, \dfrac{1-p}{-\log p_0}\right\} .
    \end{align*}
    This satisfies~\eqref{eq:formal statement} for all $p\in [0,1]$, completing the proof in this case. \\

    \noindent\textbf{Case 2.} $p_0\in (0,1-\varphi^{-1}]$ $\Longleftrightarrow$ $p^*\in (0,1/3]$. \\
    \indent In this case, $\pt = 1/3 + \varepsilon/9 > p^*$.  
    By~\eqref{eq:ed p lower bound min} and the concavity of $\ed_{\hh}$, 
    \begin{align*}
        \ed_{\hh}(\pt) 
        &\geq   (1-\varepsilon/2)\, \frac{2\log n_0}{-\log (p_0(1-p_0))\cdot n_0}\cdot \dfrac{1-\pt}{1-p^*} \\
        \ed_{\hh}(p) 
        &\geq   (1-\varepsilon/2)\, \frac{2\log n_0}{-\log (p_0(1-p_0))\cdot n_0}\cdot \dfrac{1-\pt}{1-p^*}\cdot \min\left\{\dfrac{p}{\pt}, \dfrac{1-p}{1-\pt}\right\} .
    \end{align*}~\\
    
    Substituting $\pt=1/3+\varepsilon/9$ and $p=1/3$,
    \begin{align*}
        \ed_{\hh}(1/3) 
        &\geq   (1-\varepsilon/2)\, \frac{2\log n_0}{-\log (p_0(1-p_0))\cdot n_0}\cdot \dfrac{2/3-\varepsilon/9}{1-p^*}\cdot \dfrac{1/3}{1/3+\varepsilon/9} \\
        &=      \dfrac{(1-\varepsilon/2)(1/3-\varepsilon/18)}{1/3+\varepsilon/9}\cdot \frac{2\log n_0}{n_0}\cdot \dfrac{2/3}{-\log p_0} \\
        &\geq       (1-\varepsilon)\, \frac{2\log n_0}{n_0}\cdot \dfrac{2/3}{-\log p_0} .
    \end{align*}
    
    Again by concavity, 
    \begin{align*}
        \ed_{\hh}(p) 
        \geq   (1-\varepsilon)\, \frac{2\log n_0}{n_0}\cdot \dfrac{2/3}{-\log p_0}\cdot \min\left\{\dfrac{p}{1/3},\dfrac{1-p}{2/3}\right\} .
    \end{align*}
    This matches the upper bound~\eqref{eq:upper bound on ed of random graph} for all $p\in [1/3,1]$ and, in fact, if $p^*=1/3$, then it matches the upper bound for all $p\in [0,1]$. 
    This completes the proof in this case.   \\
    
    \noindent\textbf{Case 3.} $p_0\in [\varphi^{-1},1)$ $\Longleftrightarrow$ $p^*\in [2/3,1)$. \\
    \indent This case may be shown with a similar argument as Case 2.  
    In this case, $\pt = 2/3-\varepsilon/9 < p^*$. 
    By \eqref{eq:ed p lower bound min} and the concavity of $\ed_\hh$, 
    \begin{align*}
        \ed_{\hh}(p) 
        \geq   (1-\varepsilon)\, \frac{2\log n_0}{n_0}\cdot \dfrac{1/3}{-\log p_0}\cdot \min\left\{\dfrac{p}{2/3},\dfrac{1-p}{1/3}\right\} . 
    \end{align*}
    This matches the upper bound~\eqref{eq:upper bound on ed of random graph} for all $p\in [0,2/3]$ and if $p^*=2/3$, then it matches the upper bound for all $p\in[0,1]$. This completes the proof in this case and the proof of Theorem~\ref{thm:ed of random graph}. \qed \\
\section{Discussion}
\label{sec:discussion}

In the process of proving the main result, Theorem~\ref{thm:ed of random graph}, we have developed a number of observations that apply generally to computing edit distance functions. Lemma~\ref{lem:eigenvalue prohibited} gives a general condition for which a CRG is $p$-prohbited and Lemma~\ref{lem:dalmatian p-core} shows that for $p\in [\phirange]$, the only CRGs that need to be considered are dalmatian sets and their complements. \\

In this section, we discuss some other general results.

\subsection{Defining the edit distance function with a finite set of CRGs}

The following was conjectured by the first author:
\begin{conj}[\cite{MartinSurvey}]\label{conj:finite set of CRGs}
    Let $\hh$ be a nontrivial hereditary property. For every $\varepsilon>0$ there exists a finite set of CRGs $\kk'=\kk'(\varepsilon,\hh)$ such that 
    \begin{align*}
        \ed_{\hh}(p)    =   \min\left\{g_K(p) : K\in\kk'\right\}    ,&&\mbox{for all $p\in (\varepsilon,1-\varepsilon)$.}
    \end{align*}
\end{conj}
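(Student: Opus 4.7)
The plan is to bound the order of near-optimal CRGs in $\kk_\hh$ uniformly in $p\in(\varepsilon,1-\varepsilon)$, which would reduce Theorem~\ref{thm:inf = min} to a minimum over a finite family. Fixing a small $\delta>0$, I would split $(\varepsilon,1-\varepsilon)$ into the central sub-interval $[1/3+\delta,\,2/3-\delta]$ and two tail sub-intervals. On the central sub-interval, Lemma~\ref{lem:bounded components} already guarantees, for any tolerance $\eta>0$, that every $K\in\kk_\hh$ has a sub-CRG $K'\in\kk_\hh$ whose components have order at most $N=N(\delta,\eta)$ and satisfies $g_{K'}(p)\leq(1+\eta)g_K(p)$; membership in $\kk_\hh$ is inherited since any embedding of an $F\in\ff$ into $K'$ extends to $K$.

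Next I would bound the \emph{number} of components. By Proposition~\ref{prop:archapelago}, any such $K'$ has the form $\bigoplus_{B\in\bb}\mu(B)\cdot B$, where $\bb$ is the finite set of isomorphism types of CRGs of order at most $N$. If some multiplicity $\mu(B)$ were unboundedly large, a pigeonhole-style distribution of the vertices of a fixed forbidden graph $F\in\ff$ across the copies of $B$ would produce an embedding $F\mapsto K'$, contradicting $K'\in\kk_\hh$. This bounds each $\mu(B)$ in terms of $\hh$ and $B$, yielding a uniform bound on $|V(K')|$ over the central sub-interval. The collection $\kk'$ of all CRGs in $\kk_\hh$ of order at most this bound is finite, and after letting $\eta\to 0$ along a sequence and invoking continuity of each $g_K(p)$ together with Theorem~\ref{thm:inf = min}, it witnesses the desired equality on the central sub-interval.

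The main obstacle is extending this argument to the two tail sub-intervals. Lemma~\ref{lem:bounded components} fails outside $(1/3,2/3)$ because its key ingredient, Proposition~\ref{prop:bounded diameter}, only prohibits $P_d$ on a range shrinking to a subset of $[1/3,2/3]$, and paths essentially saturate Lemma~\ref{lem:eigenvalue prohibited} in the relevant regime. A partial resolution on $[1-\varphi^{-1},\varphi^{-1}]$ is immediate from Lemma~\ref{lem:dalmatian p-core}, which reduces the problem to disjoint unions of dalmatians (or their complements) and then to bounding the dalmatian sizes by $\hh$; but closing the remaining tails $(\varepsilon,1-\varphi^{-1})$ and $(\varphi^{-1},1-\varepsilon)$ appears to require a genuinely new bounded-structure theorem for $p$-core CRGs beyond the path-diameter argument, perhaps by identifying new families of small prohibited sub-CRGs via Lemma~\ref{lem:eigenvalue prohibited} or by exploiting the restricted color structure guaranteed by Theorem~\ref{thm:p-core colors} in those regimes.
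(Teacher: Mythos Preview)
First, note that this statement is a \emph{conjecture} which the paper does not prove; the paper establishes only the partial case $\varepsilon\geq 1-\varphi^{-1}$ (Theorem~\ref{thm:finite set of CRGs}), i.e., the interval $[1-\varphi^{-1},\varphi^{-1}]$. You correctly recognize that the tails are out of reach, so the relevant comparison is between your central-interval sketch, your brief remark on the golden-ratio interval, and the paper's proof of Theorem~\ref{thm:finite set of CRGs}.

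Your central-interval argument on $[1/3+\delta,2/3-\delta]$ has a genuine gap. Lemma~\ref{lem:bounded components} is an \emph{approximation} result: for each tolerance $\eta>0$ it yields a sub-CRG $K'$ with components of order at most $B(p,\eta)$ and $g_{K'}(p)\leq(1+\eta)g_K(p)$, but inspecting its proof shows that the degree bound $D=17\eta^{-1}\max\{1/p,1/(1-p)\}$, and hence $B(p,\eta)$, blows up as $\eta\to 0$. Your finite family therefore depends on $\eta$, and ``letting $\eta\to 0$ along a sequence and invoking continuity'' does not collapse these to a single finite family achieving exact equality at every $p$; for that you would need a uniform bound on the order of the \emph{actual} $p$-core optimizers guaranteed by Theorem~\ref{thm:inf = min}, which nothing here provides outside $[1-\varphi^{-1},\varphi^{-1}]$. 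If this step could be repaired it would already extend the paper's result, but as written it cannot.

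Your sketch on $[1-\varphi^{-1},\varphi^{-1}]$ is on the right track but omits the key idea. After Lemma~\ref{lem:dalmatian p-core} restricts attention to $\dd_p$, the number of components is indeed bounded by $|V(F)|$ for any $F\in\ff$ (exactly as you and the paper argue), but the individual dalmatian orders $t$ in $D_t$ are \emph{not} a priori bounded by $\hh$. The paper's proof of Theorem~\ref{thm:finite set of CRGs} handles this by a limit construction: assuming infinitely many optimizing CRGs are needed, it passes to a subsequence in which certain dalmatian orders diverge, replaces each unbounded $D_{c_k^{(i)}}$ by $D_\infty$ (a single white vertex) to form a fixed CRG $K_*$, shows $K_*\in\kk_\hh$ by lifting any embedding of some $F\in\ff$ into $K_*$ back into a $K_k$ with sufficiently large dalmatians, and then uses Proposition~\ref{prop:archapelago} and Remark~\ref{rem:dalmatian p-core} to obtain $g_{K_*}(p)<g_{K_k}(p)=\ed_\hh(p)$, a contradiction. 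This large-dalmatian-to-white-vertex replacement is the substantive step your outline is missing.
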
~\\

Note that this is stronger than the Marchant-Thomason result in Theorem~\ref{thm:inf = min} which says that, for every $p$ there is a finite set of CRGs that define $\ed_{\hh}(p)$. Conjecture~\ref{conj:finite set of CRGs} asserts that a single finite set will define $\ed_{\hh}(p)$ for all $p$ an arbitrary open interval in $(0,1)$. In Theorem~\ref{thm:finite set of CRGs}, we provide a partial answer by showing that the conjecture is true for $\varepsilon\geq 1-\varphi^{-1}$. \\

\begin{thm}\label{thm:finite set of CRGs}
    Let $\hh$ be a nontrivial hereditary property. There exists a finite set of CRGs, $\kk'=\kk'(\hh)$, such that 
    \begin{align*}
        \ed_{\hh}(p)    =   \min\left\{g_K(p) : K\in\kk'\right\} ,  &&\mbox{for all $p\in [\phirange]$.}
    \end{align*}
\end{thm}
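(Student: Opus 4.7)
The plan is to use Lemma~\ref{lem:dalmatian p-core} to drastically restrict the search space throughout $[\phirange]$, and then invoke a well-quasi-ordering (WQO) argument to extract a finite family realizing $\ed_\hh$ uniformly. For $p\in [\phirangeleft]$, Theorem~\ref{thm:inf = min} combined with Lemma~\ref{lem:dalmatian p-core} gives
\[
    \ed_\hh(p)=\min\{g_K(p):K\in\kk_\hh\cap\dd_p\},
\]
and crucially $\dd_p$ does not depend on $p$ in this range: every $K\in\dd_p$ is a finite disjoint union of dalmatians drawn from $\{D_1,D_2,\ldots\}\cup\{D_\infty\}$. The symmetric statement, in terms of complement dalmatians, holds on $[\phirangeright]$.

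Let $K'\preceq K$ mean that $K'$ is a sub-CRG of $K$. Because any embedding $F\mapsto K'$ extends to $F\mapsto K$ via the inclusion, $\kk_\hh$ is a $\preceq$-down-set. Restricted to $\dd_p$ (for $p<1/2$), $K'\preceq K$ amounts to: the multiset of finite-dalmatian components of $K'$ injects into the multiset of finite-dalmatian components of $K$ by a map sending each $D_{t'}$ to some $D_t$ with $t'\leq t$, and the number of $D_\infty$-components of $K'$ is at most the number in $K$. (The single-white-vertex CRG $D_\infty$ is sub-CRG-incomparable to every finite dalmatian.) Higman's lemma shows that finite multisets over $\N$ form a WQO under the ``matching'' order in which $\mu\leq\mu'$ iff some injection $f:\mu\to\mu'$ satisfies $x\leq f(x)$ for every $x\in\mu$; taking the product with the $\N$-valued $D_\infty$-count yields a WQO on $\dd_p$. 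Since every down-set in a WQO has only finitely many maximal elements, let $\kk'_0$ denote the finite set of $\preceq$-maximal members of $\kk_\hh\cap\dd_p$.

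To see that $\ed_\hh$ is realized on $\kk'_0$, I would apply Proposition~\ref{prop:archapelago}: $1/g_K(p)=\sum_i 1/g_{K_i}(p)$, and a direct computation gives $1/g_{D_t}(p)=t/(tp+1-2p)$, which is strictly increasing in $t$ for $p<1/2$. Hence $g_K(p)$ is $\preceq$-monotone non-increasing on $\dd_p$, so the minimum defining $\ed_\hh(p)$ is attained on $\kk'_0$ for every $p\in [\phirangeleft)$, and by the continuity of $\ed_\hh$ from Theorem~\ref{thm:distance to random graph} also at $p=1/2$. Running the parallel argument with complement dalmatians on $[\phirangeright]$ produces a second finite set $\kk'_1$, and $\kk':=\kk'_0\cup\kk'_1$ is the required family. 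The principal technical obstacle will be verifying the WQO hypothesis cleanly: the $D_\infty$ component has to be tracked as a separate coordinate because it is sub-CRG-incomparable to every finite dalmatian, so Higman's lemma applies to a product structure rather than to the entire component multiset directly.
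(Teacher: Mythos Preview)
Your argument is correct and takes a genuinely different route from the paper's. Both proofs begin identically, invoking Lemma~\ref{lem:dalmatian p-core} to reduce to CRGs in $\dd_p$. From there the paper proceeds concretely: it first bounds the number of components of any $K\in\kk_\hh\cap\dd_p$ by $|V(F)|$ for any fixed $F\in\ff$, then assumes for contradiction that infinitely many $K_k$ are needed, passes to a subsequence on which some dalmatian sizes diverge while the remaining component profile stabilises, and replaces the diverging components by copies of $D_\infty$ to obtain a single $K_*\in\kk_\hh$ with $g_{K_*}(p)<g_{K_k}(p)$, a contradiction. Your approach instead encodes $\dd_p$ as pairs (finite multiset of dalmatian sizes, $D_\infty$-count), invokes Higman's lemma to get a WQO, and takes $\kk'_0$ to be the finitely many $\preceq$-maximal elements of the down-set $\kk_\hh\cap\dd_p$; the strict monotonicity of $g_K(p)$ in $\preceq$ (which you do establish) together with Theorem~\ref{thm:inf = min} forces every minimiser to be maximal, so $\kk'_0$ works uniformly in $p$.

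The trade-offs: the paper's argument is entirely elementary and self-contained, and the explicit $K_*$ it constructs is exactly the kind of ``limit CRG'' your WQO argument guarantees abstractly. Your argument is cleaner and more conceptual, avoids the component-count bound entirely, and handles $p=1/2$ by continuity rather than a separate case. One small wording point: you should say $g_K(p)$ is \emph{strictly} $\preceq$-decreasing rather than merely non-increasing, since strictness is what forces a minimiser to be maximal (otherwise an infinite ascending chain of minimisers could evade $\kk'_0$); you did prove strictness via Proposition~\ref{prop:archapelago}, so this is only a matter of stating it when you draw the conclusion.
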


\begin{proof}
     By Lemma~\ref{lem:dalmatian p-core}, the only $p$-core CRGs are denoted $\dd_p$ and consist of components which are dalmatian CRGs if $p\in [\phirangeleft)$, complements of dalmatian CRGs if $p\in (\phirangeright]$, and CRGs with only gray edges if $p=1/2$. \\
    
    It is easy to see that if $p=1/2$ and $\hh=\forb(\ff)$, then for any $F\in\ff$ such that $F\not\mapsto K$, the number of vertices of $K$ is bounded by $\chi(F) + \chi(F^c)$, so the number of such CRGs is finite. 
    We will now show that a finite set of CRGs suffice for $p\in[\phirangeleft)$. The case $p\in(\phirangeright]$ follows by symmetry. Note that $\dd_p$ is the same for all $p\in[0,1/2)$ (that is, CRGs whose components are all dalmatian CRGs) so we denote $\dd_{0,\hh} := \dd_p\cap\kk_{\hh}$. \\
    
    Write $\hh = \forb(\ff)$ and, for a contradiction, let $\{p_k\}_{k=1}^{\infty}\subset[\phirangeleft)$ be an infinite set and let $\kk' := \{K_k\}_{k=1}^{\infty}\subset\dd_{0,\hh}$ an infinite set of CRGs such that $K_k$ is $p_k$-core and $g_{K_k}(p_k)=\ed_{\hh}(p_k)$. \\
 
    For all $k\geq 1$, with $D_i$ denoting a dalmatian CRG of order $i$, we may write 
    \begin{align}
        K_k     =   D_{c_k^{(1)}}\oplus \cdots \oplus D_{c_k^{(\ell_k)}} \oplus (w_k\cdot D_\infty) \label{eq:Kktuple}
    \end{align}
    where $w_k,\ell_k, c_k^{(1)},\ldots,c_k^{(\ell_k)}$ are nonnegative integers and $c_k^{(1)}\geq \cdots \geq c_k^{(\ell_k)}$. \\

    For all $k\geq 1$, $w_k+\ell_k\leq |V(F)|$ for any $F\in\ff$ because otherwise $F\mapsto K_k$ by embedding each vertex of $F$ into a different component of $K_k$.  
    Thus, $\ell_k$ is bounded by an absolute constant $\ell=\ell(\hh)$.
    So we associate each $K_k$ in \eqref{eq:Kktuple} with the $(\ell+1)$-tuple 
    \begin{align*}
        \left(c_k^{(1)},\ldots,c_k^{(\ell)};w_k\right) ,
    \end{align*}
    where $c_k^{(\ell_k+1)}=\cdots=c_k^{(\ell)}=0$ if $\ell>\ell_k$. \\
    
    Because $\kk'$ is infinite, there exists a maximum $m\in\{1,\ldots,\ell\}$ such that $\sup_k \{c_k^{(1)}\}=\cdots=\sup_k \{c_k^{(m)}\}=\infty$. 
    That is, if $m<\ell$, then $\sup_k \{c_k^{(m+1)}\}<\infty$. 
    Thus, there is a fixed (possibly empty) tuple $\left(c_*^{(m+1)},\ldots,c_*^{(\ell)};w_*\right)$ and an infinite subsequence $k_1,k_2,\ldots$ such that $K_{k_i}$ is associated with $(\ell+1)$-tuple  $\left(c_{k_i}^{(1)},\ldots,c_{k_i}^{(m)},c_*^{(m+1)},\ldots,c_*^{(\ell)};w_*\right)$. \\
    
    With this choice of $\left(c_*^{(m+1)},\ldots,c_*^{(\ell)};w_*\right)$, if $\ell'$ is the largest entry such that $c_*^{(\ell')}\geq 1$, then define
    \begin{align*}
        K_*     =   D_{c_k^{(m+1)}} \oplus \cdots \oplus D_{c_k^{(\ell')}} \oplus ((m + w_*)\cdot D_\infty) .
    \end{align*}
    We claim that $K_*\in\dd_{0,\hh}\subseteq\kk_\hh$. \\
    
    If not, then there exists some $F\in\ff$ and some embedding $\phi:V(F)\to V(K_*)$.  
    Let $A_1,\dots,A_m$ be the preimages of the first $m$ copies of $D_\infty$ under $\varphi$, respectively.  
    Then $A_1,\dots,A_m$ are independent sets in $F$. \\

    Let $k$ be sufficiently large so that $c_1^k,\dots,c_m^k\geq |V(F)|$. 
    Then we define $\phi':V(F)\to V(K_k)$ by instead sending the vertices of $A_i$ to distinct vertices of the dalmatian set $D_{c_i^k}$, for each $i\in\{1,\ldots,m\}$.  
    As a result, $\phi'$ is an embedding of $F$ into $K_k$, a contradiction. \\
    
    Finally by Proposition~\ref{prop:archapelago} and Remark~\ref{rem:dalmatian p-core}, for all $p\in (0,1/2)$ and $k$ chosen as above, 
    \begin{align*}
        g_{K_k}(p)^{-1}     &=  \dfrac{w}{p} + \sum_{i=1}^m\dfrac{1}{p+(1-2p)/c_i^k} + \sum_{i=1}^{\ell-m}\dfrac{1}{p+(1-2p)/c_i} \\
                            &<  \dfrac{w+m}{p} + \sum_{i=1}^{\ell-m}\dfrac{1}{p+(1-2p)/c_i} \\
                            &=  g_{K_*}(p)^{-1}
    \end{align*}
    The fact that $g_{K_*}(p_k) < g_{K_k}(p_k) = \ed_{\hh}(p_k)$ contradicts $K_*\in\kk_{\hh}$, hence the original assumption that $\kk'$ is infinite. 
    
\end{proof}

\subsection{Paths}

In Proposition~\ref{prop:bounded diameter}, it is established that $P_d$ is $p$-prohibited for $p\in\left[\frac{1}{1+2\cos(\pi/(d+1))}, 1-\frac{1}{1+2\cos(\pi/(d+1))}\right]$. 
In the case of $d=3$, $P_3$ is $p$-prohibited for $p$ in the interval
\begin{align*}
    [\sqrt{2}-1,2-\sqrt{2}]     \approx    [0.414214,0.585786] .
\end{align*}
However, Lemma~\ref{lem:dalmatian p-core} establishes that $P_3$ is $p$-prohibited if and only if $p$ is in the interval
\begin{align*}
    [1-\varphi^{-1},\varphi^{-1}] 
    =          \left[\frac{3-\sqrt{5}}{2}, \frac{\sqrt{5}-1}{2}\right]
    \approx    [0.381966,0.618034] .
\end{align*}
We ask whether $P_d$ is $p$-prohibited over a larger interval than given in~\eqref{eq:prohibited range}. See Table~\ref{tab:paths} for small values.

\begin{table}[ht]
    \centering
    \begin{tabular}{|r||rl|rl|}\hline
        $d$     &   \multicolumn{2}{c|}{$\left(1+2 \cos(\pi/(d+1))\right)^{-1}$}         &   \multicolumn{2}{c|}{$1-\left(1+2 \cos(\pi/(d+1))\right)^{-1}$} \\ \hline\hline
        $3$     &   $\sqrt{2}-1$      &   $\approx 0.414214$      &   $2-\sqrt{2}$        &   $\approx 0.585786$ \\ \hline
        $4$     &   $(3-\sqrt{5})/2$  &   $\approx 0.381966$      &   $(\sqrt{5}-1)/2$    &   $\approx 0.618034$ \\ \hline
        $5$     &   $(\sqrt{3}-1)/2$  &   $\approx 0.366025$      &   $(3-\sqrt{3})/2$    &   $\approx 0.633975$ \\ \hline 
        $6$     &   &   $\approx 0.356896$  &   &   $\approx 0.643104$ \\ \hline
        $7$     &   &   $\approx 0.351153$  &   &   $\approx 0.648847$ \\ \hline
        $8$     &   &   $\approx 0.347296$  &   &   $\approx 0.652704$ \\ \hline
        $9$     &   &   $\approx 0.344577$  &   &   $\approx 0.655423$ \\ \hline
        $10$    &   &   $\approx 0.342585$  &   &   $\approx 0.657415$ \\ \hline
        $11$    &   &   $\approx 0.341081$  &   &   $\approx 0.658919$ \\ \hline
        $12$    &   &   $\approx 0.339918$  &   &   $\approx 0.660082$ \\ \hline
        $13$    &   &   $\approx 0.339000$  &   &   $\approx 0.661000$ \\ \hline
        $14$    &   &   $\approx 0.338261$  &   &   $\approx 0.661739$ \\ \hline
        $15$    &   &   $\approx 0.337659$  &   &   $\approx 0.662341$ \\ \hline 
    \end{tabular}
    \caption{Table for endpoints of an interval where $P_d$ is prohibited.}
    \label{tab:paths}
\end{table}

\begin{quest}
    For $d\geq 4$, what is the largest interval over which $P_d$ is $p$-prohibited?
\end{quest}~\\

\section{Questions and future work}\label{sec:questions}

\subsection{$p$-core CRGs}\label{sub-sec:p-core classification}

Lemma~\ref{lem:dalmatian p-core} classifies all $p$-core CRGs on the interval $[1-\varphi^{-1},\varphi^{-1}]$.  
\begin{quest}\label{quest:p-core interval}
    For which $a\in (0,1-\varphi^{-1})$ does there exist an elementary classification of all $p$-core CRGs for all $p\in [a,1-a]$?  
    Additionally, are all sufficiently large connected $p$-core CRGs either dalmatian CRGs (if $p\leq 1/2$) or the complement of a dalmatian CRG (if $p\geq 1/2$)?  
\end{quest}~\\

A crucial part of the proof of Theorem~\ref{thm:ed of random graph} is Lemma~\ref{lem:bounded components}, which establishes that, for $p\in (1/3,2/3)$, a $p$-core CRG can be approximated so that the $g$ function does not increase by much, but the components are bounded.

\begin{quest}
    Does Lemma \ref{lem:bounded components} hold if the interval $(1/3,2/3)$ is widened to $(a,1-a)$ for some $a \in (0,1/3)$?  
\end{quest}~\\

\subsection{Inhomogeneous random graphs}

Since the development of graph limits, inhomogeneous generalizations $\GG(n,W)$ of the Erd\H{o}s-R\'{e}nyi random graph models have emerged as a topic of research interest (see~\cite{LovaszGraphLimits}).  
Here, $W:\Omega^2\to[0,1]$ is a {\em graphon}, which is a symmetric measurable function where $\Omega$ is a probability space, frequently $[0,1]$ equipped with the Lebesgue measure.  
To form a $W$-random graph $G\sim\GG(n,W)$, sample $n$ elements $x_1,\dots,x_n\sim \Omega$ independently and form a graph on $\{1,\ldots,n\}$ by adding edge $ij$ independently with probability $W(x_i,x_j)$.  
We may also generate a sequence of $W$-random graphs $(G_n)_{n=1}^\infty\sim\GG(\N,W)$ adding the vertices corresponding to $x_1,x_2,\ldots$, one at a time.  \\

There are several questions we may ask related to the edit distance problem and inhomogeneous random graphs.  \\

First, note that Theorem \ref{thm:ed of random graph} implies that with 
$p_0\in [\phirange]$
and $(F_n)\sim \GG(\N,p_0)$, then \aas, 
\begin{align}\label{eq:g function ratio}
    \lim_{n\to\infty}\sup_{p\in [0,1]}\dfrac{\ed_{\forb(F_{n+1})}(p)}{\ed_{\forb(F_n)}(p)}
    = 1.  
\end{align}~\\

\begin{quest}
    For what graphons $W$ does Equation \eqref{eq:g function ratio} \aas~hold for $(F_n)\sim\GG(\N,W)$?  
\end{quest}
For the next question, we note the following expression for the distance from a homogeneous random graph to a hereditary property.  
Combining Theorems \ref{thm:distance to random graph} and \ref{thm:inf = min}, we see that if $\hh$ is a (fixed) nontrivial hereditary property and $p\in [0,1]$, then with $G_n\sim\GG(n,p)$,  
\begin{align}\label{eq:distance as a min of CRGs}
    \lim_{n\to\infty}\Exp[\dist(G_n,\hh)]
    = \inf_{K\in \kk_\hh}g_K(p)
    = \min_{K\in \kk_\hh}g_K(p).  
\end{align}~\\

\begin{quest}
    If instead $G_n\sim\GG(n,W)$, is there a similar expression for $\limsup_{n\to\infty} \Exp[\dist(G_n,\hh)]$?  
    In particular, can we extend the functions $g_K(\cdot)$ from $[0,1]$ to the set of all graphons so that Equation \eqref{eq:distance as a min of CRGs} holds?  
\end{quest}~\\

\subsection{Acknowledgements}
The authors would like to thank Jan Hladk\'y and Josh Cooper for helpful comments.

\bibliography{mybib}
\bibliographystyle{plain}
\end{document}